\documentclass[10pt]{article}
\date{}
\makeatletter
\@addtoreset{equation}{section}
\makeatother
\usepackage[
hmarginratio={1:1},     
vmarginratio={1:1},     
textwidth=15cm,        
textheight=22cm,
heightrounded,          
]{geometry}
\usepackage[colorlinks,
linkcolor=blue,
anchorcolor=blue,
citecolor=blue]{hyperref}
\usepackage{graphicx}
\usepackage{hyperref}
\hypersetup{hypertex=true,
	colorlinks=true,
	linkcolor=blue,
	anchorcolor=blue,
	citecolor=blue}
\usepackage[nottoc,notlot,notlof]{tocbibind}
\usepackage{hyperref,amsmath,amssymb,amscd}
\usepackage{amssymb,amsmath,enumerate}
\usepackage{indentfirst}
\usepackage{url}
\usepackage[capitalise]{cleveref}
\usepackage{amsthm}
\usepackage{cite}
\newtheorem{theorem}{Theorem}[section]

\newtheorem{lemma}[theorem]{Lemma}
\newtheorem{proposition}[theorem]{Proposition}
\theoremstyle{definition}

\newtheorem{remark}[theorem]{\bf Remark}
\newtheorem{openproblem}[]{Open problem}
\allowdisplaybreaks[4]

\numberwithin{equation}{section}

\begin{document}
	\title{
		{Spectral Characterization, Local Rigidity and Morse Index of Bubble Solutions to the CR Fractional Yamabe Problem}}
\author{
	Jianlun Liu, Vicen\c{t}iu D. R\u{a}dulescu, Hong-Rui Sun and Ziheng Zhang
}
	\maketitle
	\baselineskip 17pt

	{\bf Abstract.} We study bubble solutions to the critical CR fractional Yamabe equation on the Heisenberg group for the full range \(0<\gamma<\frac{Q}{2}\), including the admissible integer-order cases. The CR fractional operator is genuinely
	nonlocal and is described by Gamma-function spectral multipliers. Its
	structure differs from that of the Euclidean fractional Laplacian, so the
	linearized analysis cannot be obtained by a direct extension of
	integer-order arguments. We develop a direct spectral approach based on the
	Cayley intertwining relation and the complex spherical-harmonic
	decomposition, which characterizes the kernel of the linearized operator in
	terms of infinitesimal translation and dilation modes without reducing the
	equation to an integral equation or requiring a separate decay analysis. As
	a byproduct, we obtain the exact normalization constant of the standard
	bubble throughout the admissible range. We then prove three main results:
	$(i)$ the non-degeneracy of the bubble manifold; $(ii)$ the local rigidity of
	the bubble manifold, via a uniform invertibility estimate on the orthogonal
	complement of the tangent space, with an explicit optimal constant; $(iii)$
	the Morse index of every positive bubble is exactly one. The results provide a
	unified spectral and variational framework for the CR Yamabe problem on the Heisenberg group, covering both integer and noninteger orders.

	\textbf{Key words.} CR fractional Yamabe problem; Heisenberg group; fractional sub-Laplacian; nondegeneracy of bubble solutions; local rigidity; Morse index.
	
	\textbf{2020 MSC}. Primary: 32V20, 35R03;
	Secondary: 35B33, 58E05, 35H20.

\vspace{-0.3cm}
\tableofcontents

	\section{Introduction and main results}
	
	The Yamabe problem is one of the fundamental problems in conformal geometry. Its counterpart in Cauchy-Riemann geometry asks whether a given strictly pseudoconvex CR manifold admits, within a prescribed
	conformal class of contact forms, a contact form with constant Webster scalar curvature. The CR Yamabe problem was initiated and developed by Jerison and Lee \cite{Jerison-Lee1987,Jerison-Lee1988}. In particular,
	their analysis of the standard CR sphere and its non-compact model, the Heisenberg group $\mathbb H^n$, led to the sharp Folland-Stein Sobolev inequality and to the complete description of its positive extremals. These extremals form a finite-dimensional family generated by the Heisenberg translations and anisotropic dilations, and are usually referred to bubble solutions.
	
	Fractional CR geometry provides a nonlocal extension of this theory. On the standard CR sphere $\mathbb S^{2n+1}$, Branson et al. \cite{Branson-Fontana-Morpurgo2013} studied the intertwining operators and gave an explicit description of their spectrum in terms of the complex spherical harmonic decomposition. On the Heisenberg group, Frank and Lieb \cite{Frank-Lieb2012} obtained the sharp Hardy-Littlewood-Sobolev inequality, together with the classification of its optimizers and the corresponding sharp Sobolev inequalities for conformally invariant fractional powers of the sub-Laplacian. Subsequently,
	Frank et al. \cite{Frank-Gonzalez-Monticelli-Tan2015} realized these fractional operators as scattering operators and established an associated extension problem and energy identity on the Siegel upper half-space. For further analytic properties of fractional powers of the sub-Laplacian on $\mathbb{H}^n$, we refer to \cite{Roncal-Thangavelu2016,Liu-Zhang2015} and the references therein. 
	
	While the sharp fractional Sobolev inequality and the complete classification of its positive extremals have provided a foundational understanding of the fractional critical CR Yamabe problem, deeper analytical properties of these solutions remain to be fully explored. A fundamental issue in the analysis of critical equations is the non-degeneracy of these extremals (or bubbles). Roughly speaking, non-degeneracy means that all solutions of the linearized equation are generated by the continuous symmetries of the original equation. This property is a basic ingredient in Lyapunov-Schmidt reductions, perturbation arguments, the construction of concentrating solutions, and the description of bubbling phenomena. Motivated by this, in this paper, we investigate spectral characterization, local rigidity and Morse index of bubble solutions to the following critical CR fractional Yamabe equation
	\begin{equation}\label{main-problem}
		\mathcal L_\gamma u=u^\frac{Q+2\gamma}{Q-2\gamma }\ \ \mbox{on}\ \ \mathbb H^n,
	\end{equation}
	where $n\geq 1$, $0<\gamma<\frac{Q}{2}$, $Q:=2n+2$ is the homogeneous dimension of $\mathbb H^n$, and $\mathcal L_\gamma$ is the CR fractional sub-Laplacian on $\mathbb H^n$ which defined by
	$$
	\mathcal L_\gamma :=(2|T|)^\gamma \frac{\Gamma\Big(\frac{1+\gamma}{2}+\frac{\mathcal L}{2|T|}\Big)}{\Gamma\Big(\frac{1-\gamma }{2}+\frac{\mathcal L}{2|T|}\Big)},
	$$
	with $T:=\frac{\partial}{\partial t}$ and $\mathcal L$ is the standard sub-Laplacian on $\mathbb H^n$. The operator used in the present paper is the normalized CR fractional sub-Laplacian 
	$$
	\mathcal L_\gamma:=2^{-\gamma}\mathcal P^{\theta}_\gamma
	$$
	where $\mathcal P^{\theta}_\gamma$ is the CR fractional sub-Laplacian in \cite{Frank-Gonzalez-Monticelli-Tan2015}. As we know, the sharp fractional Sobolev inequality implies that all positive extremals are obtained from a standard bubble via Heisenberg translations, dilations and multiplication by constants. After normalizing the coefficient on the right-hand side of (\ref{main-problem}) to one, we denote this family by
	\begin{equation}\label{eqn:mathcal-Z-define}
		\mathcal{Z} :=\Big \{\omega_{\lambda,\xi} : \lambda > 0,\  \xi \in \mathbb{H}^n\Big\},
	\end{equation}
	where $\lambda$ is the dilation parameter and $\xi$ is the translation center. The standard bubble is denoted by $\omega_{1,0}$.  In the Euclidean fractional setting, D\'avila et al. \cite{Davila-DelPino-Sire2013} proved the non-degeneracy of the extremals of the fractional Sobolev inequality. For the fractional critical CR Yamabe equation on the Heisenberg group, Chen and Wang \cite[Theorem 2.3]{Chen-Wang2017} established the corresponding non-degeneracy result. To be precise, they demonstrated that a function $u \in \mathcal{D}^\gamma(\mathbb{H}^n)$ satisfies the linearized equation
	\begin{equation}\label{eqn:linear-equation-f}
		\mathcal L_\gamma u = (Q^*-1) \omega_{1,0}^\frac{4\gamma}{Q-2\gamma}u
	\end{equation}
	if and only if there exist real coefficients $\mu \in \mathbb{R}$ and $\widetilde{\mu}_j \in \mathbb{R} $ ($j=1,\dots,2n+1$) such that $u$ can be represented as a linear combination of the following partial derivatives:
	$$
	u = \mu \frac{\partial \omega_{\lambda, \xi}}{\partial \lambda} \bigg|_{(\lambda, \xi) = (1, 0)} + \sum_{j=1}^{2n+1} \widetilde{\mu}_j\frac{\partial \omega_{\lambda, \xi}}{\partial \xi_j} \bigg|_{(\lambda, \xi) = (1, 0)},
	$$
	where $Q^*:=\frac{2Q}{Q-2\gamma}$.
	
	Other aspects of the fractional critical CR Yamabe equation (\ref{main-problem}) on the Heisenberg group have also received considerable attention. For instance, Krist\'aly \cite{Kristaly2020} studied the sign-changing counterpart of 
	$$
	\mathcal L_\gamma u=|u|^{\frac{4\gamma}{Q-2\gamma}}u\ \ \text{on}\ \ \mathbb H^n,
	$$
	and proved that for
	$$
	\gamma\in\bigcup_{k=1}^{n}\left[k,\frac{kQ}{Q-1}\right),
	$$
	there exist several sequences of sign-changing solutions with distinct nodal properties. Guidi et al.  \cite{Guidi-Maalaoui-Martino2018} investigated the lack of compactness of the fractional CR Yamabe functional on the standard CR sphere. They established a global decomposition theorem for Palais–Smale sequences, showing that after passing to a subsequence, every such sequence decomposes into a weak solution on the sphere together with finitely many concentrating profiles obtained from entire solutions through translations, dilations, and the Cayley transform of the corresponding critical equation on the Heisenberg group. Chtioui et al. \cite{Chtioui-Hajaiej-Yacoub2021} studied the prescribed fractional CR $Q$-curvature equation on the standard sphere for $0<\gamma<\frac{Q}{2}$. By using Bahri's theory of critical points at infinity, they constructed suitable pseudo-gradient vector fields and gave a quantitative description of the noncompact flow lines of the associated variational functional. In particular, they characterized the possible blow-up configurations, identified the corresponding concentration points and single- or multi-bubble profiles, computed their Morse indices, and derived several existence results for prescribed fractional CR $Q$-curvatures. Although the problems considered by Guidi et al. \cite{Guidi-Maalaoui-Martino2018} and Chtioui et al. \cite{Chtioui-Hajaiej-Yacoub2021} are formulated primarily on the CR sphere, the Heisenberg group plays an essential role in both analyses. In the former work, each concentrating component of a Palais–Smale sequence is described in terms of an entire solution of the critical equation on $\mathbb H^n$. In the latter, the Cayley transform converts the spherical prescribed-curvature equation into
	$$
	\mathcal L_\gamma u=(K\circ\tau)u^{\frac{Q+2\gamma}{Q-2\gamma}}
	\qquad\text{on }\mathbb H^n,
	$$
	and the spherical bubbles are obtained from the standard Heisenberg bubble by left translations and dilations. Regarding the symmetry and classification of nonnegative $C^2$ solutions of $(\ref{main-problem})$, Prajapat and Varghese \cite{Prajapat-Varghese2025} employed the moving plane method for the equivalent integral equation on the Heisenberg group, together with the Hardy–Littlewood–Sobolev inequality, to establish a complete classification without imposing any a priori cylindrical symmetry or additional decay condition at infinity.  More precisely, they proved that every nontrivial nonnegative $C^2$ solution of (\ref{main-problem}) is cylindrically symmetric and even with respect to the central variable. Furthermore, they also showed that every positive solution of (\ref{main-problem}) is obtained from the standard bubble $\omega_{1,0}$ by Heisenberg translations and dilations.
	
	Recently, Qiang et al. \cite{Qiang-Tang-Zhang2026} proved a non-degeneracy result for the critical CR Yamabe problem and applied it to the construction of sign-changing concentrating solutions in the local setting ($\gamma=1$). Inspired by their work, the purpose of the present paper is threefold. First, we give an explicit spectral realization of the kernel of the linearized operator at $\omega_{1,0}$. Although Chen and Wang \cite{Chen-Wang2017} previously established the non-degeneracy of the extremals, our argument aims to identify every kernel direction through the Cayley transform and the complex spherical-harmonic
	decomposition on $\mathbb S^{2n+1}$. Second, we derive local rigidity of the bubble manifold $\mathcal Z$ from non-degeneracy. Third, we determine the sign
	structure of the linearized quadratic form and compute the Morse index of the standard positive bubble $\omega_{1,0}$ and use its invariance under Heisenberg translations and dilations to compute the Morse index of $\omega_{\lambda,\xi}$. Our first main result is stated below.

	\begin{theorem}\label{main-result-1}
		Assume that $\gamma \in (0, \frac{Q}{2})$ and let $\omega_{1,0}$ be the standard positive bubble solution to (\ref{main-problem}). Then $\omega_{1,0}$ is non-degenerate. More precisely, for every solution $u\in\mathcal D^\gamma(\mathbb H^n)$ of (\ref{eqn:linear-equation-f}), there exist real coefficients $\mu_1, \mu_2, \alpha_j, \beta_j$ ($j=1, \dots, n$) such that 
		$$
		u = \mu_1\partial_\lambda \omega_{\lambda, \xi}\big|_{(\lambda,\xi)=(1,0)} + \mu_2\partial_t \omega_{\lambda, \xi} \big|_{(\lambda,\xi)=(1,0)} + \sum_{j=1}^n \left( \alpha_j X^R_j \omega_{\lambda,\xi} \big|_{(\lambda,\xi)=(1,0)} + \beta_j Y^R_j \omega_{\lambda,\xi} \big|_{(\lambda,\xi)=(1,0)} \right),
		$$ 
		where $X^R_j :=\partial_{x_j} -2y_j \partial_t$ and $Y^R_j :=\partial_{y_j} + 2x_j \partial_t$.
	\end{theorem}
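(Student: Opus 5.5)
We pull the linearized equation (\ref{eqn:linear-equation-f}) back to the CR sphere $\mathbb S^{2n+1}$ through the Cayley transform and diagonalize it in complex spherical harmonics. Let $\mathcal C:\mathbb S^{2n+1}\setminus\{\text{pole}\}\to\mathbb H^n$ be the Cayley transform. Let $J_{\mathcal C}$ be its Jacobian, so that the pullback of the standard contact form satisfies $\mathcal C^*\theta_{\mathbb H}=|J_{\mathcal C}|^{1/(n+1)}\theta_{\mathbb S}$. The intertwining relation (Branson–Fontana–Morpurgo, Frank–Lieb) states
$$
|J_{\mathcal C}|^{\frac{Q+2\gamma}{2Q}}\,(\mathcal L_\gamma u)\circ\mathcal C=\mathcal A_\gamma\Big(|J_{\mathcal C}|^{\frac{Q-2\gamma}{2Q}}\,u\circ\mathcal C\Big).
$$
Here $\mathcal A_\gamma$ is the spherical intertwining operator. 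On the space $\mathcal H_{j,k}$ of complex spherical harmonics of bidegree $(j,k)$ it acts as multiplication by
$$
\lambda_{j,k}(\gamma)=\frac{\Gamma\big(\frac{Q+2\gamma}{4}+j\big)\Gamma\big(\frac{Q+2\gamma}{4}+k\big)}{\Gamma\big(\frac{Q-2\gamma}{4}+j\big)\Gamma\big(\frac{Q-2\gamma}{4}+k\big)},
$$
up to the normalization factor $2^{-\gamma}$. The standard bubble $\omega_{1,0}$ is, up to a constant $c_{n,\gamma}$, exactly $|J_{\mathcal C}|^{(Q-2\gamma)/(2Q)}$. We fix $c_{n,\gamma}$ by requiring $\mathcal A_\gamma 1=\lambda_{0,0}\,1$ to match $c^{Q^*-1}$ after the transform; this gives the exact normalization constant as a byproduct. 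Set $v:=|J_{\mathcal C}|^{-(Q-2\gamma)/(2Q)}\,u\circ\mathcal C$. Then $\omega_{1,0}^{4\gamma/(Q-2\gamma)}$ transforms into the constant $c^{4\gamma/(Q-2\gamma)}$, and (\ref{eqn:linear-equation-f}) becomes the eigenvalue problem on the sphere
$$
\mathcal A_\gamma v=(Q^*-1)\,\lambda_{0,0}\,v .
$$
The change $u\mapsto v$ is an isometry from $\mathcal D^\gamma(\mathbb H^n)$ onto $H^\gamma(\mathbb S^{2n+1})$ for the energy norms, which follows from the Sobolev and HLS framework recalled above. Hence $v$ lies in the form domain, and the decomposition $L^2(\mathbb S^{2n+1})=\bigoplus_{j,k}\mathcal H_{j,k}$ reduces the problem to the scalar equations $\lambda_{j,k}=(Q^*-1)\lambda_{0,0}$.

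The key step, and the main obstacle, is the analysis of these scalar equations. Since $Q^*-1=\frac{Q+2\gamma}{Q-2\gamma}$ and $\Gamma(a+1)/\Gamma(b+1)=\frac{a}{b}\,\Gamma(a)/\Gamma(b)$, we get
$$
\lambda_{1,0}=\lambda_{0,1}=\frac{Q+2\gamma}{Q-2\gamma}\,\lambda_{0,0}.
$$
So all of $\mathcal H_{1,0}\oplus\mathcal H_{0,1}$ solves the equation. For the remaining bidegrees we need strict monotonicity: $\lambda_{j,k}$ should be strictly increasing in each of $j$ and $k$. The ratio is
$$
\frac{\lambda_{j+1,k}}{\lambda_{j,k}}=\frac{\frac{Q+2\gamma}{4}+j}{\frac{Q-2\gamma}{4}+j}>1,
$$
provided $\frac{Q-2\gamma}{4}+j>0$, which holds for all $j\ge0$ because $\gamma<Q/2$. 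This ratio argument is uniform in $\gamma$, including integer $\gamma$, where no Gamma-function poles appear since all arguments are positive. Monotonicity then gives $\lambda_{j,k}>\lambda_{1,0}$ whenever $j+k\ge2$, and $\lambda_{0,0}<\lambda_{1,0}$. Therefore the kernel equals $\mathcal H_{1,0}\oplus\mathcal H_{0,1}$, whose real span is spanned by the real and imaginary parts of $\zeta_1,\dots,\zeta_{n+1}$. It has real dimension $2n+2$.

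Finally, we transport these $2n+2$ functions back to $\mathbb H^n$. For each of $\partial_\lambda\omega$, $\partial_t\omega$, $X^R_j\omega$ and $Y^R_j\omega$ at $(1,0)$, the generating symmetry (dilation, central translation, or horizontal translation) satisfies the equation $\mathcal L_\gamma\omega_{\lambda,\xi}=\omega_{\lambda,\xi}^{Q^*-1}$ identically in the parameters. Differentiating at $(1,0)$ therefore shows that each of these $2n+2$ functions solves (\ref{eqn:linear-equation-f}). A direct computation using the explicit bubble $\omega_{1,0}=c\,\big((1+|z|^2)^2+t^2\big)^{-(Q-2\gamma)/4}$ identifies each function, after the transform, with an explicit combination of $\operatorname{Re}\zeta_m$, $\operatorname{Im}\zeta_m$. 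For instance, $\partial_\lambda\omega$ and $\partial_t\omega$ correspond to the real and imaginary parts of the polar coordinate $\zeta_{n+1}$, and the horizontal fields correspond to $\zeta_j$, $j\le n$. These functions are linearly independent, since they have different parity and symmetry in $(x,y,t)$. Hence they span the whole $(2n+2)$-dimensional kernel, which proves the representation claimed in Theorem~\ref{main-result-1}.
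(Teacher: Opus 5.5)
Your proposal is correct and follows essentially the paper's route: you pull back by the Cayley intertwining relation, reduce (\ref{eqn:linear-equation-f}) to $\mathcal A_\gamma\rho=(Q^*-1)\lambda_{0,0}(\gamma)\rho=\lambda_{1,0}(\gamma)\rho$, and identify the dilation, central and horizontal translation derivatives of $\omega_{\lambda,\xi}$ at $(1,0)$ with the pullbacks of $\operatorname{Re}\zeta_{n+1}$, $\operatorname{Im}\zeta_{n+1}$, $\operatorname{Re}\zeta_j$, $\operatorname{Im}\zeta_j$. The first difference is that you prove the $\lambda_{1,0}(\gamma)$-eigenspace is exactly $\mathcal H_{1,0}\oplus\mathcal H_{0,1}$ by the elementary ratio $\lambda_{j+1}(\gamma)/\lambda_j(\gamma)=\frac{Q+2\gamma+4j}{Q-2\gamma+4j}>1$, whereas at this point the paper cites Branson--Fontana--Morpurgo and only later proves monotonicity via the digamma function in Lemma~\ref{lem:4-2}; the second is a harmless notational slip, since with your orientation $\mathcal C:\mathbb S^{2n+1}\to\mathbb H^n$ the profile should be $v=|J_{\mathcal C}|^{\frac{Q-2\gamma}{2Q}}\,u\circ\mathcal C$ (positive exponent), that is, a constant multiple of $(u/\omega_{1,0})\circ\mathcal C$, which is the paper's $\rho$.
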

	
	\begin{remark}
		The conformally invariant fractional sub-Laplacian $\mathcal L_\gamma$ is defined for the full range
		$$
		0<\gamma<\frac{Q}{2}=n+1.
		$$
		Among the references cited here, the explicit normalization constant needed in our analysis can be obtained from Roncal and Thangavelu \cite[Page 143]{Roncal-Thangavelu2016}, whose formula is valid for
		$$
		0<\gamma<\frac{n+1}{2}=\frac{Q}{4}.
		$$
		Their argument involves the functions $u_{-\gamma,\delta}$ in the ground state representation, for which the relevant $L^2$-integrability requires $\gamma<\frac{Q}{4}$. This is the source of the above restriction in that approach. 
		
		Since our subsequent analysis requires the exact normalization constant $C(n,\gamma)$ (defined in (\ref{eqn:C-kappa-define-relation})), we derive it directly from the spectral formula for the spherical operator $\mathcal A_\gamma$ and the Cayley intertwining relation in Lemma \ref{solution-sca-tran}. This removes the above restriction and yields
		$C(n,\gamma)=\left[2^{2\gamma}\left(\frac{\Gamma\left(\frac{Q+2\gamma}{4}\right)}{\Gamma\left(\frac{Q-2\gamma}{4}\right)}\right)^2\right]^{\frac{Q-2\gamma}{4\gamma}}$ for the full range $0<\gamma<\frac{Q}{2}$, which agrees with the normalization obtained from Roncal and Thangavelu \cite{Roncal-Thangavelu2016} in the range $0<\gamma<\frac{Q}{4}$. 
	\end{remark}
	
	By exploiting the invariance of the fractional critical CR Yamabe equation (\ref{main-problem}) under Heisenberg translations and dilations, the non-degeneracy result for the standard bubble $\omega_{1,0}$ in Theorem \ref{main-result-1} can be directly generalized to any rescaled and translated bubble solution $\omega_{\lambda, \xi}$ of (\ref{main-problem}) with the dilation parameter $\lambda > 0$ and the translation center $\xi=(a,b,s) \in \mathbb{H}^n$. This leads to the following result:
	\begin{theorem}\label{main-result-2}
		Suppose that $\gamma \in (0, \frac{Q}{2})$. Then, for every $\lambda>0$ and $\xi=(a,b,s) \in \mathbb{H}^n$,
		$$
		\ker\mathcal L_{\omega_{\lambda,\xi}}=\operatorname{span}\left\{\partial_\lambda\omega_{\lambda,\xi},\partial_s
		\omega_{\lambda,\xi},\partial_{a_j}\omega_{\lambda,\xi},\partial_{b_j}\omega_{\lambda,\xi}:\ j=1,\ldots,n\right\},
		$$
		where $\omega_{\lambda,\xi}$ is given in (\ref{eqn:w-lambda-xi-define}) and
		$
		\mathcal L_{\omega_{\lambda,\xi}}  := \mathcal{L}_\gamma - (Q^\ast - 1) \omega_{\lambda,\xi}^{\frac{4\gamma}{Q-2\gamma}} .
		$.
	\end{theorem}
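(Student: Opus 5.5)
The plan is to transport Theorem~\ref{main-result-1} from $\omega_{1,0}$ to $\omega_{\lambda,\xi}$ using the symmetries of $\mathcal L_\gamma$. Write $\tau_\xi$ for left translation by $\xi$ and $\delta_\lambda$ for the anisotropic dilation. Define the unitary-type map
\[
T_{\lambda,\xi}u:=\lambda^{\frac{Q-2\gamma}{2}}\,u\big(\delta_\lambda(\xi^{-1}\cdot)\big),
\]
with the convention of (\ref{eqn:w-lambda-xi-define}) so that $\omega_{\lambda,\xi}=T_{\lambda,\xi}\omega_{1,0}$. We will use three properties. First, $\mathcal L_\gamma$ commutes with left translations, since $\mathcal L$ and $T$ are left invariant and $\mathcal L_\gamma$ is a spectral function of them. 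Second, $\mathcal L_\gamma$ is homogeneous of degree $2\gamma$ under $\delta_\lambda$, since $\mathcal L$ and $|T|$ both scale like $\lambda^2$ and the Gamma quotient depends only on $\mathcal L/|T|$. Third, $T_{\lambda,\xi}$ is an isometry of $\mathcal D^\gamma(\mathbb H^n)$. Together these give
\[
\mathcal L_\gamma(T_{\lambda,\xi}u)=\lambda^{2\gamma}\,T'_{\lambda,\xi}(\mathcal L_\gamma u),
\qquad
\omega_{\lambda,\xi}^{\frac{4\gamma}{Q-2\gamma}}\,T_{\lambda,\xi}u=\lambda^{2\gamma}\,T'_{\lambda,\xi}\big(\omega_{1,0}^{\frac{4\gamma}{Q-2\gamma}}u\big),
\]
where $T'$ carries the weight $\lambda^{\frac{Q-2\gamma}{2}}$ together with the same change of variables. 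Hence $\mathcal L_{\omega_{\lambda,\xi}}\circ T_{\lambda,\xi}=\lambda^{2\gamma}T'_{\lambda,\xi}\circ\mathcal L_{\omega_{1,0}}$. Since $T_{\lambda,\xi}$ is bijective on $\mathcal D^\gamma$, this yields $\ker\mathcal L_{\omega_{\lambda,\xi}}=T_{\lambda,\xi}\ker\mathcal L_{\omega_{1,0}}$, and the latter kernel is $(2n+2)$-dimensional by Theorem~\ref{main-result-1}.

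The next step is the inclusion "$\supseteq$" together with the dimension count. Differentiating the identity $\mathcal L_\gamma\omega_{\lambda,\xi}=\omega_{\lambda,\xi}^{\frac{Q+2\gamma}{Q-2\gamma}}$ in the parameters $\lambda,a_j,b_j,s$ shows that each listed derivative lies in the kernel. Their membership in $\mathcal D^\gamma$ follows from the explicit decay of $\omega_{\lambda,\xi}$ and of its derivatives, which behave like $\omega_{\lambda,\xi}$ itself up to bounded factors. It then suffices to show that the $2n+2$ listed functions are linearly independent, because an independent $(2n+2)$-element subset of a $(2n+2)$-dimensional space spans it.

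For independence, compute derivatives at $\xi$. Since $\omega_{\lambda,\xi}(\eta)=\omega_{\lambda,0}(\xi^{-1}\eta)$, we have $\partial_s\omega_{\lambda,\xi}=-T\omega_{\lambda,0}(\xi^{-1}\cdot)$. Likewise $\partial_{a_j}\omega_{\lambda,\xi}$ and $\partial_{b_j}\omega_{\lambda,\xi}$ equal $-X_j^R\omega_{\lambda,0}$ and $-Y_j^R\omega_{\lambda,0}$ (evaluated at $\xi^{-1}\eta$), each modulo a multiple of $T\omega_{\lambda,0}$ coming from the group law. Here $X_j^R,Y_j^R$ are the right-invariant fields, which generate left translations. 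Thus the span of the listed functions equals $T_{\lambda,\xi}$ applied to
\[
\operatorname{span}\{\partial_\lambda\omega,\ \partial_t\omega,\ X_j^R\omega,\ Y_j^R\omega\}\big|_{(1,0)},
\]
up to the nonzero factors from the dilation. This span is exactly $\ker\mathcal L_{\omega_{1,0}}$ by Theorem~\ref{main-result-1}. The same triangular change of basis shows the two families have equal span, so the independence needed is just that of the $2n+2$ functions appearing in Theorem~\ref{main-result-1}. That independence can be checked from the explicit bubble $\omega_{1,0}=C(1+|z|^2)^2+t^2)^{-\frac{Q-2\gamma}{4}}$, read with the paper's exact form of the bubble: the dilation mode is even in $(z,t)$, $\partial_t\omega$ is odd in $t$, and $X_j^R\omega,Y_j^R\omega$ are odd in $x_j$ and $y_j$ respectively at $t=0$, which separates them.

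The main obstacle is the careful bookkeeping in the intertwining step. One must check that the exponent $\frac{4\gamma}{Q-2\gamma}$ and the weight $\lambda^{\frac{Q-2\gamma}{2}}$ produce the same factor $\lambda^{2\gamma}$ on both sides. One must also check that dilation homogeneity of $\mathcal L_\gamma$ holds, which should be verified via its spectral definition on $\mathcal L$ and $T$, or via the Fourier transform in $t$.
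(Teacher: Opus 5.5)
Your proposal is correct and follows the paper's route. The paper obtains the statement by transporting Theorem~\ref{main-result-1} through the translation--dilation unitary $\mathcal U_{\lambda,\xi}$ (your $T_{\lambda,\xi}$), using the left invariance and $2\gamma$-homogeneity of $\mathcal L_\gamma$, as in the conjugation $\mathcal K_{\omega_{\lambda,\xi}}=\mathcal U_{\lambda,\xi}\mathcal K_{\omega_{1,0}}\mathcal U_{\lambda,\xi}^{-1}$ of Lemma~\ref{lem:fredholm-operator}. Your triangular change of basis supplies bookkeeping the paper leaves implicit, namely the extra multiples of $\partial_t\omega_{\lambda,0}$ from the group law and the nonzero powers of $\lambda$; it gives the span identity directly, so the separate differentiation, decay and independence steps are not needed.
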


	\begin{remark}
		$(i)$ Theorems \ref{main-result-1} and \ref{main-result-2} are closely related. For any $\xi=(a,b,s)\in\mathbb H^n$, we have 
		$$
		\partial_{a_j}\omega_{\lambda,\xi}\big|_{(\lambda,\xi)=(1,0)}
		=-X_j^R\omega_{1,0},
		\qquad
		\partial_{b_j}\omega_{\lambda,\xi}\big|_{(\lambda,\xi)=(1,0)}
		=-Y_j^R\omega_{1,0},
		$$
		and
		$$
		\partial_s\omega_{\lambda,\xi}\big|_{(\lambda,\xi)=(1,0)}
		=-\partial_t\omega_{1,0}.
		$$
		
		Thus, Theorem \ref{main-result-1} explicitly identifies the tangent directions of the bubble family at $\omega_{1,0}$ with the first nontrivial real spherical eigenspace through the Cayley transform, and consequently characterizes the kernel of the linearized operator at the standard bubble. Theorem \ref{main-result-2} then transports this kernel characterization from $\omega_{1,0}$ to every bubble $\omega_{\lambda,\xi}$ by means of Heisenberg translations and dilations.
		
		$(ii)$ The non-degeneracy of $\omega_{1,0}$ was also established by Chen and Wang \cite[Theorem 2.3]{Chen-Wang2017}. Although both arguments use the Cayley transform and the spherical harmonic decomposition, the technical approaches are different. Chen and Wang first used the fundamental solution of the fractional CR operator to rewrite the linearized equation as an integral equation on $\mathbb H^n$, and then transformed it into an eigenvalue problem for a spherical integral operator. Their proof also requires a decay iteration to show that the transformed function extends continuously across the missing point of the sphere. In contrast, our proof works directly with the intertwining identity between $\mathcal L_\gamma$ and the spherical operator $\mathcal A_\gamma$, and uses the explicit spectral decomposition of $\mathcal A_\gamma$ on the spaces $\mathcal H_{j,k}$. The explicit computations of the parameter derivatives, or equivalently the corresponding right-invariant vector-field derivatives, then identify the kernel with the first nontrivial real spherical eigenspace without passing through the integral formulation or requiring a separate decay argument.
	\end{remark}

	Based on the above non-degeneracy result, the local rigidity of the bubble manifold $\mathcal Z$ is studied below. The main difficulty is that $\mathcal Z$ is a non-compact manifold generated by translations and dilations, so the linearized operator is non-invertible on the whole space, since its kernel is exactly $T_\omega\mathcal Z$. Moreover, the nonlocal nature of $\mathcal L_\gamma$ makes it difficult to obtain an estimate uniform in the bubble parameters. These difficulties are addressed by establishing a uniform lower bound for the linearized operator on $(T_\omega\mathcal Z)^\perp$ through Fredholm theory and conformal invariance. After choosing the nearest bubble, the error is decomposed orthogonally to the tangent space, and the linear term is compared with the higher-order nonlinear remainder. This yields the following local rigidity result.

	\begin{theorem}\label{main-result-4}
		Let $\gamma \in (0, \frac{Q}{2})$. There exists $\epsilon > 0$ such that if $u \in\mathcal{D}^\gamma(\mathbb{H}^n)$ is a positive solution to (\ref{main-problem}) and satisfies
		$$
		\inf_{\omega \in \mathcal{Z}} \Vert{}u - \omega\Vert{}_{\mathcal{D}^\gamma(\mathbb H^n)} < \epsilon,
		$$
		then $u \in \mathcal{Z}$, where $\mathcal Z$ is given in (\ref{eqn:mathcal-Z-define}).
	\end{theorem}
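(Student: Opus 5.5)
The plan is to choose a nearest bubble, split the error into a tangential and an orthogonal part, and use the non-degeneracy from Theorem \ref{main-result-2} to show that the orthogonal part must vanish. Write $\|\cdot\|$ for the $\mathcal D^\gamma(\mathbb H^n)$ norm induced by $\langle u,v\rangle=\int u\,\mathcal L_\gamma v$. Set $p=\frac{Q+2\gamma}{Q-2\gamma}$.

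\textbf{Step 1 (nearest bubble).} Given $u$ with $\inf_{\omega\in\mathcal Z}\|u-\omega\|<\epsilon$, I first show that the infimum is attained at some $\omega=\omega_{\lambda,\xi}$. The distance function $(\lambda,\xi)\mapsto\|u-\omega_{\lambda,\xi}\|^2$ is continuous. As $\lambda\to0$, $\lambda\to\infty$ or $|\xi|\to\infty$, the bubbles converge weakly to $0$, so $\|u-\omega_{\lambda,\xi}\|^2\to\|u\|^2+\|\omega_{1,0}\|^2$. This limit exceeds $\epsilon^2$ when $\epsilon$ is small, because $\|u\|$ is close to $\|\omega_{1,0}\|$. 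Hence a minimizer exists.

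Set $\rho=u-\omega$. The first-order conditions at the minimizer say $\langle \rho,\partial\omega\rangle=0$ for every parameter derivative. So $\rho\perp T_\omega\mathcal Z$ in $\mathcal D^\gamma$, and $\|\rho\|<\epsilon$.

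\textbf{Step 2 (uniform coercivity on the orthogonal complement).} Let $\mathcal L_\omega=\mathcal L_\gamma-p\,\omega^{p-1}$. I claim there is $c>0$, independent of $(\lambda,\xi)$, such that
\[
\|\mathcal L_\omega\phi\|_{(\mathcal D^\gamma)^*}\ge c\|\phi\| \quad\text{for all } \phi\perp T_\omega\mathcal Z .
\]
By conformal invariance, the map $\phi\mapsto\lambda^{(Q-2\gamma)/2}\phi(\delta_\lambda(\xi^{-1}\cdot))$ is an isometry of $\mathcal D^\gamma$. It conjugates $\mathcal L_{\omega_{\lambda,\xi}}$ to $\mathcal L_{\omega_{1,0}}$ and maps tangent spaces to tangent spaces. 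It therefore suffices to treat $\omega_{1,0}$.

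There, $\phi\mapsto\mathcal L_\gamma^{-1}(p\,\omega_{1,0}^{p-1}\phi)$ is compact on $\mathcal D^\gamma$. The reason is that $\omega_{1,0}^{p-1}\in L^{Q/(2\gamma)}$ decays, so multiplication by it is compact from $\mathcal D^\gamma$ to its dual, via the Folland–Stein Sobolev embedding and local compactness. Hence $I-K$ is Fredholm of index zero. Its kernel is exactly $T_{\omega_{1,0}}\mathcal Z$ by Theorem \ref{main-result-1}, and it is self-adjoint in $\mathcal D^\gamma$. So it is invertible on the orthogonal complement, which gives the constant $c$.

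For the optimal explicit constant, I would transplant to $\mathbb S^{2n+1}$ by the Cayley transform. There $\omega_{1,0}^{p-1}$ becomes a constant weight and $\mathcal L_\gamma$ becomes $\mathcal A_\gamma$, with eigenvalues $\lambda_{j,k}$ on $\mathcal H_{j,k}$. The constant then comes from the first eigenvalue beyond those of $j+k\le1$, namely
\[
c=1-\frac{\lambda_{1,0}}{\lambda_{j,k}}\quad\text{minimized over } j+k\ge2 .
\]
The monotonicity of the Gamma-ratio spectral multipliers gives that this minimum is attained at $j+k=2$.

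\textbf{Step 3 (closing the argument).} Since $\mathcal L_\gamma\omega=\omega^p$ and $\mathcal L_\gamma u=u^p$, we get
\[
\mathcal L_\omega\rho=N(\rho):=(\omega+\rho)^p-\omega^p-p\,\omega^{p-1}\rho ,
\]
where positivity of $u$ keeps everything well defined. The pointwise bound is
\[
|N(\rho)|\lesssim \omega^{p-2}|\rho|^2+|\rho|^{p}\ \text{ if } p\ge2,\qquad |N(\rho)|\lesssim|\rho|^p\ \text{ if } 1<p<2 .
\]
Combining this with Hölder and Sobolev gives $\|N(\rho)\|_{(\mathcal D^\gamma)^*}\lesssim\|\rho\|^{\min(p,2)}$. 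Step 2 then yields
\[
c\|\rho\|\le C\|\rho\|^{\min(p,2)} ,
\]
with exponent strictly greater than $1$. Hence $\rho=0$ once $\epsilon$ is small, so $u=\omega\in\mathcal Z$.

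The main obstacle is Step 2. It needs the compactness of the weighted multiplication, and it must be uniform over the noncompact parameter family. The conformal-invariance reduction handles the uniformity, provided I check that the rescaling is an isometry of the $\mathcal D^\gamma$ inner product and not merely of the norm. I would verify this first using the homogeneity $\mathcal L_\gamma(f\circ\delta_\lambda)=\lambda^{2\gamma}(\mathcal L_\gamma f)\circ\delta_\lambda$ and left invariance.
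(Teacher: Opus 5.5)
Your proposal is correct and follows essentially the same route as the paper. The shared steps are: a nearest bubble with $u-\omega\perp T_\omega\mathcal Z$; the uniform bound $\|\mathcal L_\omega v\|_{\mathcal D^{-\gamma}(\mathbb H^n)}\ge c\|v\|_{\mathcal D^\gamma(\mathbb H^n)}$ on $(T_\omega\mathcal Z)^\perp$, obtained from compactness of $\mathcal L_\gamma^{-1}(V_\omega\,\cdot)$, Fredholm theory and the unitary rescaling $\mathcal U_{\lambda,\xi}$ (Lemma~\ref{lem:fredholm-operator}); and the remainder estimate $\|N(\rho)\|_{\mathcal D^{-\gamma}(\mathbb H^n)}\lesssim\|\rho\|_{\mathcal D^\gamma(\mathbb H^n)}^{\min(p,2)}$, uniform along $\mathcal Z$.

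The differences are minor. You prove existence of the minimizing bubble directly via weak vanishing of $\omega_{\lambda,\xi}$ at the ends of the parameter space, whereas the paper quotes Proposition~\ref{lem:infimum-decomposition}, which also asserts a uniqueness you do not need. You also conclude directly rather than by contradiction along a sequence.
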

	
	\begin{remark}
		Theorem \ref{main-result-4} gives a local rigidity (or uniqueness) result near the bubble manifold $\mathcal Z$. In contrast, Prajapat and Varghese \cite[Theorem 1.13]{Prajapat-Varghese2025} proved a global classification for the equivalent critical integral equation by combining the moving-plane method with CR type inversion. Their result shows that every positive solution can be obtained from the standard bubble by a Heisenberg translation and a dilation. Nevertheless, Theorem \ref{main-result-4} has a different purpose. It gives a local rigidity result directly from the non-degeneracy of the bubble manifold, without using the moving-plane method or a global classification argument. More precisely, the proof relies only on the kernel characterization of the linearized operator, a uniform coercive estimate on the orthogonal complement of the tangent space, and a local estimate for the nonlinear remainder. This local argument is therefore well suited to perturbative problems in which the exact symmetry or the global integral structure required by moving-plane arguments may no longer be available.
	\end{remark}
	Following the variational framework for the fractional operators established in \cite{Branson-Fontana-Morpurgo2013,Frank-Lieb2012,Frank-Gonzalez-Monticelli-Tan2015}, we define the linearized operator $\mathcal{L}_{\omega_{1,0}}$ around the standard positive bubble $\omega_{1,0}$ as
	\begin{equation}\label{eqn:mathcal-L-+-define}
		\mathcal{L}_{\omega_{1,0}} := \mathcal{L}_\gamma - (Q^\ast - 1)\omega_{1,0}^{\frac{4\gamma}{Q-2\gamma}}.
	\end{equation}
	Since the linearized operator $\mathcal{L}_{\omega_{1,0}}$ naturally maps $\mathcal{D}^{\gamma}(\mathbb{H}^{n})$ into its dual space $\mathcal{D}^{-\gamma}(\mathbb{H}^{n})$, its eigenvalue problem is not directly well defined without specifying a Hilbert space realization. Therefore, we adopt the variational framework from Frank et al. \cite[Definition 3.2]{Frank-Lenzmann-Silvestre2016} and let $m(\omega_{1,0})$ denote the Morse index of $\omega_{1,0}$. This index is defined as the maximal dimension of a subspace of $\mathcal{D}^{\gamma}(\mathbb{H}^{n})$ on which the associated quadratic form
	\begin{equation}\label{eqn:Q-w-10-define}
		\mathcal{Q}_{\omega_{1,0}}(f, f) := \int_{\mathbb{H}^n} \Big( f \mathcal{L}_\gamma f - (Q^\ast - 1)\omega_{1,0}^{\frac{4\gamma}{Q-2\gamma}} f^2 \Big)d \eta
	\end{equation}
	is strictly negative. Equivalently, this dimension corresponds to the number of strictly negative eigenvalues (counting multiplicities) of the self-adjoint operator $I - \mathcal{K}_{\omega_{1,0}}$ on $\mathcal{D}^{\gamma}(\mathbb{H}^{n})$ (which will be introduced in Section \ref{proof of main result}). Via the Cayley transform, it also equals the negative spectrum count of the shifted intertwining operator 
	$
	\mathcal P_\gamma:=\mathcal A_\gamma-\lambda_{1,0}(\gamma)I
	$ 
	on $\mathbb{S}^{2n+1}$.
	\begin{theorem}\label{main-result-3}
		Let $\gamma \in (0, \frac{Q}{2})$ and $\omega_{1,0}$ be the standard positive solution to (\ref{main-problem}). Then the Morse index of $\omega_{1,0}$ is exactly $1$, i.e., $m(\omega_{1,0}) = 1$. More precisely, the negative subspace of the quadratic form $\mathcal Q_{\omega_{1,0}}$ is one dimensional. Equivalently, the shifted spherical operator
		$\mathcal P_\gamma$ has exactly one negative eigenvalue, which is simple, and its eigenspace is the space $\mathcal H_{0,0}$ of constant functions, where $\mathcal A_\gamma$ is the general intertwining operator, $\lambda_{1,0}(\gamma)$ is the eigenvalue of $\mathcal{A}_\gamma$ on  $\mathcal{H}_{1,0}$  and $I$ is the identity operator.
	\end{theorem}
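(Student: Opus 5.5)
We will prove that the Morse index is $1$ by transferring the quadratic form $\mathcal Q_{\omega_{1,0}}$ to the sphere through the Cayley transform and then reading off signs from the explicit spectrum of $\mathcal A_\gamma$ on the spaces $\mathcal H_{j,k}$.

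\textbf{Step 1: transfer to the sphere.} Let $\mathcal C$ be the Cayley transform and $J$ its Jacobian. For $f\in\mathcal D^\gamma(\mathbb H^n)$ set
\[
F=(J^{-\frac{Q-2\gamma}{2Q}} f)\circ\mathcal C^{-1}.
\]
By the intertwining relation used in Lemma \ref{solution-sca-tran}, $\int_{\mathbb H^n} f\mathcal L_\gamma f\,d\eta=\int_{\mathbb S^{2n+1}} F\mathcal A_\gamma F\,d\zeta$. Since $\omega_{1,0}$ is, up to the normalization constant $C(n,\gamma)$, the pullback of a constant, the potential term becomes constant on the sphere. Moreover, $\omega_{1,0}^{\frac{4\gamma}{Q-2\gamma}}$ corresponds to $J^{\frac{2\gamma}{Q}}$ times a constant, so $\int \omega_{1,0}^{\frac{4\gamma}{Q-2\gamma}}f^2\,d\eta=c\int F^2\,d\zeta$. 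The equation $\mathcal L_\gamma\omega_{1,0}=\omega_{1,0}^{Q^*-1}$ fixes this constant: $(Q^*-1)c$ equals $(Q^*-1)\lambda_{0,0}(\gamma)$.

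\textbf{Step 2: identify the shift.} We verify that $(Q^*-1)\lambda_{0,0}(\gamma)=\lambda_{1,0}(\gamma)$, where
\[
\lambda_{j,k}(\gamma)=\frac{\Gamma\bigl(\frac{Q+2\gamma}{4}+j\bigr)\Gamma\bigl(\frac{Q+2\gamma}{4}+k\bigr)}{\Gamma\bigl(\frac{Q-2\gamma}{4}+j\bigr)\Gamma\bigl(\frac{Q-2\gamma}{4}+k\bigr)}.
\]
Using $\Gamma(x+1)=x\Gamma(x)$,
\[
\frac{\lambda_{1,0}}{\lambda_{0,0}}=\frac{Q+2\gamma}{Q-2\gamma}=Q^*-1.
\]
This is the same computation that yields Theorem \ref{main-result-1}. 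Consequently
\[
\mathcal Q_{\omega_{1,0}}(f,f)=\langle \mathcal P_\gamma F,F\rangle_{L^2(\mathbb S^{2n+1})},
\]
and the map $f\mapsto F$ is a linear bijection from $\mathcal D^\gamma(\mathbb H^n)$ onto the form domain $H^{\gamma}(\mathbb S^{2n+1})$. Therefore $m(\omega_{1,0})$ equals the number of $(j,k)$-eigenvalues of $\mathcal A_\gamma$ lying strictly below $\lambda_{1,0}$, counted with $\dim\mathcal H_{j,k}$. By the min-max principle, the maximal dimension of a negative subspace is exactly this count.

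\textbf{Step 3: monotonicity of the spectrum (main obstacle).} We must show that $\lambda_{j,k}<\lambda_{1,0}$ only for $(j,k)=(0,0)$, and that $\lambda_{j,k}\ge\lambda_{1,0}$ whenever $j+k\ge1$. Write $\lambda_{j,k}=g(j)g(k)$ with
\[
g(m)=\frac{\Gamma(a+m)}{\Gamma(b+m)},\qquad a=\frac{Q+2\gamma}{4}>b=\frac{Q-2\gamma}{4}>0.
\]
The ratio satisfies
\[
\frac{g(m+1)}{g(m)}=\frac{a+m}{b+m}>1,
\]
so $g$ is positive and strictly increasing in $m\ge0$. This holds for the full range $0<\gamma<Q/2$, including integer $\gamma$, because $b>0$ means no Gamma poles occur. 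Hence $\lambda_{j,k}\ge\min(\lambda_{1,0},\lambda_{0,1})=\lambda_{1,0}$ for $j+k\ge1$, with equality only on $\mathcal H_{1,0}\oplus\mathcal H_{0,1}$, which is the kernel from Theorem \ref{main-result-1}. Meanwhile $\lambda_{0,0}<\lambda_{1,0}$ and $\dim\mathcal H_{0,0}=1$. So the negative eigenvalue is simple and its eigenspace is the constants.

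The delicate points are the rigorous identification of the form domains, namely that the Cayley pullback is an isometry between $\mathcal D^\gamma$ and $H^\gamma(\mathbb S^{2n+1})$, and the positivity of $g$ for all $\gamma$. The second follows from $b>0$. As an independent check, the negative direction is realized on $\mathbb H^n$ by $f=\omega_{1,0}$ itself:
\[
\mathcal Q_{\omega_{1,0}}(\omega_{1,0},\omega_{1,0})=(2-Q^*)\int\omega_{1,0}^{Q^*}\,d\eta<0.
\]
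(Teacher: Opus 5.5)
Your proposal is correct and follows the paper's route. You transfer the form to the sphere with the shift $(Q^*-1)\lambda_{0,0}(\gamma)=\lambda_{1,0}(\gamma)$, as in Lemma \ref{lem:4-1}, then use monotonicity of $\lambda_j(\gamma)$, as in Lemma \ref{lem:4-2}, and count $\dim\mathcal H_{0,0}=1$. Your ratio identity $g(m+1)/g(m)=(a+m)/(b+m)>1$ is a more elementary substitute for the paper's digamma argument. With your weight $J$ instead of $2|J_{\mathcal C}|$, the correct identity is $\mathcal Q_{\omega_{1,0}}(f,f)=2^{2\gamma/Q}\langle\mathcal P_\gamma F,F\rangle$ rather than exact equality, but this positive factor does not affect any sign.
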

	
	Since every bubble $\omega_{\lambda,\xi}$ is obtained from $\omega_{1,0}$ by a Heisenberg translation and a dilation, and these transformations preserve the quadratic form and its negative directions, the same conclusion holds for $\omega_{\lambda,\xi}$. This gives the following result.
	
	\begin{theorem}\label{main-result-5}
		Suppose that $\gamma\in(0,\frac{Q}{2})$. Then, for every $\lambda>0$ and $\xi\in\mathbb H^n$, the Morse index of $\omega_{\lambda,\xi}$ is equal to that of the standard bubble $\omega_{1,0}$. That is,
		$
		m(\omega_{\lambda,\xi})=m(\omega_{1,0})=1.
		$
	\end{theorem}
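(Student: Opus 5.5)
The proof reduces Theorem \ref{main-result-5} to Theorem \ref{main-result-3} by an invariance argument. For $\lambda>0$ and $\xi\in\mathbb H^n$, consider the map
$$
(T_{\lambda,\xi}f)(\eta):=\lambda^{\frac{Q-2\gamma}{2}}\,f\big(\delta_\lambda(\xi^{-1}\circ\eta)\big),
$$
where $\delta_\lambda(x,y,t)=(\lambda x,\lambda y,\lambda^2 t)$ is the anisotropic dilation and $\circ$ is the Heisenberg group law. The plan is to show that $T_{\lambda,\xi}$ is a linear bijection of $\mathcal D^\gamma(\mathbb H^n)$ that intertwines the two quadratic forms,
$$
\mathcal Q_{\omega_{\lambda,\xi}}(T_{\lambda,\xi}f,T_{\lambda,\xi}f)=\mathcal Q_{\omega_{1,0}}(f,f)\qquad\text{for all } f\in\mathcal D^\gamma(\mathbb H^n).
$$
Once this identity holds, $T_{\lambda,\xi}$ carries every subspace on which $\mathcal Q_{\omega_{1,0}}$ is strictly negative to a subspace of the same dimension on which $\mathcal Q_{\omega_{\lambda,\xi}}$ is strictly negative. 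The inverse map $T_{\lambda,\xi}^{-1}=T_{\lambda^{-1},\,\cdot}$ (the composition of the inverse dilation and the inverse translation) does the same in the other direction. Hence $m(\omega_{\lambda,\xi})=m(\omega_{1,0})$, and Theorem \ref{main-result-3} gives the value $1$.

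The identity splits into the kinetic part and the potential part.

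\textbf{Kinetic part.} We need
$$
\int_{\mathbb H^n} (T f)\,\mathcal L_\gamma(Tf)\,d\eta=\int_{\mathbb H^n} f\,\mathcal L_\gamma f\,d\eta .
$$
Left translations commute with $\mathcal L$ and with $T=\partial_t$, because both are built from left-invariant vector fields. Hence they commute with the functional calculus defining $\mathcal L_\gamma$. Under dilation we have $\mathcal L(f\circ\delta_\lambda)=\lambda^2(\mathcal Lf)\circ\delta_\lambda$ and $|T|(f\circ\delta_\lambda)=\lambda^2(|T|f)\circ\delta_\lambda$. Substituting these into the ratio of Gamma functions, the argument $\mathcal L/(2|T|)$ is invariant, and the prefactor $(2|T|)^\gamma$ scales by $\lambda^{2\gamma}$. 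Therefore
$$
\mathcal L_\gamma(f\circ\delta_\lambda)=\lambda^{2\gamma}(\mathcal L_\gamma f)\circ\delta_\lambda .
$$
Together with the Jacobian $\lambda^{-Q}$ and the weight $\lambda^{Q-2\gamma}$ coming from the normalization of $T_{\lambda,\xi}$, this gives invariance of the energy. It also shows that $T_{\lambda,\xi}$ is an isometry of $\mathcal D^\gamma$.

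\textbf{Potential part.} We use that $\omega_{\lambda,\xi}=T_{\lambda,\xi}\omega_{1,0}$, which is how the family (\ref{eqn:w-lambda-xi-define}) is generated. Then
$$
\omega_{\lambda,\xi}^{\frac{4\gamma}{Q-2\gamma}}(\eta)=\lambda^{2\gamma}\,\omega_{1,0}^{\frac{4\gamma}{Q-2\gamma}}\big(\delta_\lambda(\xi^{-1}\circ\eta)\big),
$$
and $(Tf)^2$ carries the factor $\lambda^{Q-2\gamma}$. The total prefactor $\lambda^{Q}$ cancels the Jacobian $\lambda^{-Q}$ from the change of variables. Left translation preserves Haar measure, so the potential term is invariant as well.

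The main point needing care is the kinetic commutation. The cleanest route is through the group Fourier transform (or the joint spectral resolution of $\mathcal L$ and $-i\partial_t$). There, dilation acts by rescaling the spectral parameters, so it commutes with any function of $\mathcal L/|T|$. This makes the scaling of $\mathcal L_\gamma$ rigorous on a dense class, for example Schwartz functions, and the identity extends to all of $\mathcal D^\gamma$ by density.

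If the paper's parametrization of $\omega_{\lambda,\xi}$ uses $\lambda^{-1}$ in place of $\lambda$, only the naming of $T_{\lambda,\xi}$ changes; the argument is the same.
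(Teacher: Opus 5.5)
Your proposal is correct and matches the paper's proof. You use the same map $f\mapsto f_{\lambda,\xi}=\lambda^{\frac{Q-2\gamma}{2}}f(\delta_\lambda(\xi^{-1}\eta))$, derive $\mathcal Q_{\omega_{\lambda,\xi}}(f_{\lambda,\xi},f_{\lambda,\xi})=\mathcal Q_{\omega_{1,0}}(f,f)$ from translation invariance, homogeneity and a change of variables, transfer negative subspaces in both directions, and take the value $1$ from Theorem \ref{main-result-3}; your functional-calculus derivation of $\mathcal L_\gamma(f\circ\delta_\lambda)=\lambda^{2\gamma}(\mathcal L_\gamma f)\circ\delta_\lambda$ supplies a step the paper only asserts.
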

	
	The preceding Morse index results identify the negative and zero spectral subspaces associated with the linearized quadratic form. The explicit spectrum also provides more precise information about the linearized operator. In particular, it allows us to determine the optimal constant in an estimate for the linearized operator on the orthogonal complement of its kernel. Together with the invariance under Heisenberg translations and dilations, this leads to a sharp estimate that holds uniformly along the bubble manifold. The following result strengthens the uniform invertibility estimate established in Lemma \ref{lem:fredholm-operator}.
	
	\begin{theorem}\label{main-result-6}
		Let \(0<\gamma<\frac{Q}{2}\). For every \(\omega\in\mathcal Z\) and \(v\in(T_\omega\mathcal Z)^\perp\), the linearized operator \(\mathcal L_\omega\) satisfies
		\[
		\|\mathcal L_\omega v\|_{\mathcal D^{-\gamma}(\mathbb H^n)}
		\geq c_{n,\gamma}\|v\|_{\mathcal D^\gamma(\mathbb H^n)},
		\]
		where
		\[
		c_{n,\gamma}:=\frac{4\gamma}{Q+2\gamma+4}.
		\]
		Moreover, this constant is optimal. If \(v\) also satisfies
		\[
		(v,\omega)_{\mathcal D^\gamma(\mathbb H^n)}=0,
		\]
		then
		\[
		\langle\mathcal L_\omega v,v\rangle\geq
		c_{n,\gamma}\|v\|_{\mathcal D^\gamma(\mathbb H^n)}^2.
		\]
		The constant in this inequality is also optimal.
	\end{theorem}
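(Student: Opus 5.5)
The plan is to reduce everything to the standard bubble $\omega=\omega_{1,0}$ and then read both inequalities off the explicit spherical spectrum, using the Cayley intertwining relation and the decomposition into the spaces $\mathcal H_{j,k}$ that were already used for Theorems \ref{main-result-1} and \ref{main-result-3}.

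\emph{Reduction to $\omega_{1,0}$.} Heisenberg translations and the $\mathcal D^\gamma$-normalized dilations $u\mapsto \lambda^{(Q-2\gamma)/2}u(\delta_\lambda\cdot)$ are isometries of $\mathcal D^\gamma(\mathbb H^n)$ and, by duality, of $\mathcal D^{-\gamma}(\mathbb H^n)$. They commute with $\mathcal L_\gamma$, map $\omega_{1,0}$ to $\omega_{\lambda,\xi}$, and conjugate $\mathcal L_{\omega_{1,0}}$ to $\mathcal L_{\omega_{\lambda,\xi}}$. They also carry $T_{\omega_{1,0}}\mathcal Z$ onto $T_{\omega_{\lambda,\xi}}\mathcal Z$ (Theorem \ref{main-result-2}), and carry $\omega_{1,0}$ to $\omega_{\lambda,\xi}$. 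Hence both inequalities and their optimality are invariant along $\mathcal Z$, and it suffices to treat $\omega=\omega_{1,0}$.

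\emph{Spectral diagonalization.} By the Riesz isomorphism, $\|\mathcal L_\omega v\|_{\mathcal D^{-\gamma}}=\|(I-\mathcal K_{\omega})v\|_{\mathcal D^\gamma}$, where $\mathcal K_\omega v=\mathcal L_\gamma^{-1}\big((Q^*-1)\omega^{4\gamma/(Q-2\gamma)}v\big)$. Also $\langle \mathcal L_\omega v,v\rangle=((I-\mathcal K_\omega)v,v)_{\mathcal D^\gamma}$.

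Pulling back to $\mathbb S^{2n+1}$ by the Cayley transform, the following correspondences hold:
\begin{itemize}
\item the $\mathcal D^\gamma$ inner product becomes $\langle \mathcal A_\gamma F,G\rangle_{L^2(\mathbb S^{2n+1})}$ (up to a fixed constant);
\item the weight $(Q^*-1)\omega^{4\gamma/(Q-2\gamma)}$ becomes the constant $(Q^*-1)\lambda_{0,0}(\gamma)=\lambda_{1,0}(\gamma)$, as in the proof of Theorem \ref{main-result-3};
\item $\omega$ itself corresponds to the constants $\mathcal H_{0,0}$.
\end{itemize}
Consequently $\mathcal K_\omega$ acts on $\mathcal H_{j,k}$ as multiplication by $\lambda_{1,0}/\lambda_{j,k}$. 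The spaces $\mathcal H_{j,k}$ are mutually orthogonal both in $L^2$ and in the $\mathcal A_\gamma$ inner product.

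By Theorem \ref{main-result-1}, the kernel $T_\omega\mathcal Z$ corresponds to $\mathcal H_{1,0}\oplus\mathcal H_{0,1}$. So for $v\perp T_\omega\mathcal Z$ with components $v_{j,k}$, we get
\[
\|\mathcal L_\omega v\|_{\mathcal D^{-\gamma}}^2=\sum_{(j,k)\notin\{(1,0),(0,1)\}}\Big(1-\frac{\lambda_{1,0}}{\lambda_{j,k}}\Big)^2\|v_{j,k}\|^2_{\mathcal D^\gamma},
\qquad
\langle\mathcal L_\omega v,v\rangle=\sum\Big(1-\frac{\lambda_{1,0}}{\lambda_{j,k}}\Big)\|v_{j,k}\|^2_{\mathcal D^\gamma}.
\]
The first inequality therefore has optimal constant $\min_{(j,k)\ne(1,0),(0,1)}|1-\lambda_{1,0}/\lambda_{j,k}|$. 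The second has optimal constant $\min_{j+k\ge2}(1-\lambda_{1,0}/\lambda_{j,k})$, because $(v,\omega)_{\mathcal D^\gamma}=0$ removes exactly the $\mathcal H_{0,0}$ component. Optimality in each case is attained by taking $v$ in the minimizing eigenspace, and such $v$ satisfies all the orthogonality constraints.

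\emph{Computing the minimum (the main step).} I will write $\lambda_{j,k}=C\,\frac{\Gamma(a+j)\Gamma(a+k)}{\Gamma(b+j)\Gamma(b+k)}$ with $a=\frac{Q+2\gamma}{4}$ and $b=\frac{Q-2\gamma}{4}$, using the normalization of the spectral formula for $\mathcal A_\gamma$ stated earlier. I expect the main obstacle to be fixing this normalization correctly, since it determines the exact constant. Then $\lambda_{j+1,k}/\lambda_{j,k}=(a+j)/(b+j)>1$ because $a>b$, so $\lambda_{j,k}$ is strictly increasing in each index.

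Hence the smallest value of $1-\lambda_{1,0}/\lambda_{j,k}$ over $j+k\ge2$ is attained at $(2,0)$, $(1,1)$ and $(0,2)$, all of which give the same ratio $\lambda_{1,0}/\lambda_{2,0}=(b+1)/(a+1)$. This yields
\[
1-\frac{b+1}{a+1}=\frac{a-b}{a+1}=\frac{4\gamma}{Q+2\gamma+4}=c_{n,\gamma}.
\]
For the first inequality, the only additional mode is $\mathcal H_{0,0}$, where
\[
\Big|1-\frac{\lambda_{1,0}}{\lambda_{0,0}}\Big|=Q^*-2=\frac{4\gamma}{Q-2\gamma},
\]
and this exceeds $c_{n,\gamma}$. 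So the minimum is still $c_{n,\gamma}$, attained on $\mathcal H_{1,1}$, for instance.

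Finally I will check that $(T_\omega\mathcal Z)^\perp$ is taken in the $\mathcal D^\gamma$ inner product, so that it matches the $\mathcal A_\gamma$-orthogonal complement of $\mathcal H_{1,0}\oplus\mathcal H_{0,1}$. The identity $\|\mathcal L_\omega v\|_{\mathcal D^{-\gamma}}=\|(I-\mathcal K_\omega)v\|_{\mathcal D^\gamma}$ then follows from the definition of the dual norm.
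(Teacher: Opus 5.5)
Your overall route is the paper's. You reduce to $\omega_{1,0}$ by translations and normalized dilations, pull back by the Cayley transform, and diagonalize $I-\mathcal K_{\omega_{1,0}}$ on the spaces $\mathcal H_{j,k}$ with eigenvalues $\mu_{j,k}(\gamma)=1-\lambda_{1,0}(\gamma)/\lambda_{j,k}(\gamma)$. You then use the orthogonality conditions to remove $\mathcal H_{1,0}\oplus\mathcal H_{0,1}$, and also $\mathcal H_{0,0}$ for the quadratic-form estimate.

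The error is in the step you yourself call the main one. Monotonicity of $\lambda_{j,k}$ in each index only reduces the minimum over $j+k\geq 2$ to the three candidates $(2,0)$, $(1,1)$, $(0,2)$. You then assert that all three give the ratio $(b+1)/(a+1)$, which is false for $(1,1)$. Since $\lambda_{j,k}=\lambda_j\lambda_k$ and $a>b>0$,
\[
\frac{\lambda_{1,0}(\gamma)}{\lambda_{1,1}(\gamma)}=\frac{\lambda_0(\gamma)}{\lambda_1(\gamma)}=\frac{b}{a}<\frac{b+1}{a+1}=\frac{\lambda_1(\gamma)}{\lambda_2(\gamma)}=\frac{\lambda_{1,0}(\gamma)}{\lambda_{2,0}(\gamma)}.
\]
Hence $\mu_{1,1}(\gamma)=\frac{a-b}{a}=\frac{4\gamma}{Q+2\gamma}$, which is strictly larger than $\frac{4\gamma}{Q+2\gamma+4}=\mu_{2,0}(\gamma)=\mu_{0,2}(\gamma)$. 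Equivalently, $\lambda_{1,1}/\lambda_{2,0}=\frac{a(b+1)}{b(a+1)}>1$. This is exactly the comparison the paper makes to conclude that the minimum is attained only at $(2,0)$ and $(0,2)$, and your argument skips it.

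Your value of $c_{n,\gamma}$ therefore comes out right only because you happened to evaluate the $(2,0)$ mode, and your optimality witness ``attained on $\mathcal H_{1,1}$'' fails. Take $v$ to be the pullback of a real element of $\mathcal H_{1,1}$, for instance $|\zeta_1|^2-|\zeta_2|^2$. Then $v$ is admissible for both constraints, but $(I-\mathcal K_{\omega_{1,0}})v=\frac{4\gamma}{Q+2\gamma}\,v$. So both $\|\mathcal L_\omega v\|_{\mathcal D^{-\gamma}}/\|v\|_{\mathcal D^\gamma}$ and $\langle\mathcal L_\omega v,v\rangle/\|v\|_{\mathcal D^\gamma}^2$ equal $\frac{4\gamma}{Q+2\gamma}>c_{n,\gamma}$.

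To repair this, insert the comparison of $\lambda_{1,1}$ with $\lambda_{2,0}$, and take the extremal from the real part of $\mathcal H_{2,0}\oplus\mathcal H_{0,2}$. A suitable choice is the pullback of $\operatorname{Re}(\zeta_1^2)$. It lies in $(T_\omega\mathcal Z)^\perp$ and is $\mathcal D^\gamma$-orthogonal to $\omega$, so it gives equality in both inequalities.

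Separately, the normalization of $\mathcal A_\gamma$ that you flag as the main obstacle is a non-issue. $\mathcal K_{\omega_{1,0}}$ acts on $\mathcal H_{j,k}$ by $(Q^*-1)\lambda_{0,0}/\lambda_{j,k}=\lambda_{1,0}/\lambda_{j,k}$, a ratio of eigenvalues, so any constant factor cancels.
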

	
	The paper is organized as follows. Section \ref{sec:preliminary} gives some necessary preliminaries on the Heisenberg group. In Section \ref{proof of main result}, we establish Theorems \ref{main-result-1} and \ref{main-result-2} by using the Cayley transform, and also prove the local rigidity result in Theorem \ref{main-result-4}. Finally, Section \ref{proof of main result-3} computes the Morse index of $\omega_{\lambda,\xi}$, and proves Theorems \ref{main-result-3} and \ref{main-result-5}. The optimal estimates in Theorem \ref{main-result-6} are established at the end of that section.

	\section{Preliminaries}\label{sec:preliminary}
	The Heisenberg group $\mathbb{H}^n$ is the space $\mathbb{C}^n \times \mathbb{R}$ equipped with the group law
	\begin{equation}\label{cric-define}
		(z, t) (z', t') := (z + z', t + t' + 2 \operatorname{Im} (z \cdot \overline{z'})),
	\end{equation}
	where $z = (z_1, \ldots, z_n)$,  $z' = (z'_1, \ldots, z'_n)$ and 
	$$
	z \cdot \overline{w} := \sum_{j=1}^n z_j \overline{w}_j
	$$ 
	is the standard Hermitian inner product on $\mathbb{C}^n$. Here, $\operatorname{Im}$ denotes the imaginary part of a complex number. The identity element of $\mathbb{H}^n$ is $(0,0)$, and the inverse of $(z,t)$ is 
	$$
	(z,t)^{-1}=(-z,-t).
	$$
	Writing $z=x+iy$ with $x,y\in\mathbb R^n$, we identify $\mathbb C^n$ with $\mathbb R^n\times \mathbb R^n$ and use $\eta=(x,y,t)$ as real coordinates on $\mathbb H^n$.  Since
	$
	\overline{z'}=x'-iy',
	$
	we have 
	$
	z\cdot \overline{z'}=(x+iy)(x'-iy').
	$
	Expanding this expression componentwise gives
	$$
	(x_j+iy_j)(x'_j-iy'_j)=x_jx'_j+y_jy'_j+i(y_jx'_j-x_jy'_j).
	$$
	Therefore,
	$$
	z\cdot\overline{z'}=\sum_{j=1}^n(x_jx'_j+y_jy'_j)+i\sum_{j=1}^n(y_jx'_j-x_jy'_j), 
	$$
	which implies that
	$$
	\operatorname{Im}(z\cdot\overline{z'})=\sum_{j=1}^n(y_jx'_j-x_jy'_j)=y\cdot x'-x\cdot y'.
	$$
	Consequently, the group law can be written in real coordinates as
	$$
	(x,y,t)(x',y',t')=\left(x+x',y+y',t+t'+2(y\cdot x'-x\cdot y')\right).
	$$
	Lebesgue measure on $\mathbb{R}^{2n+1}$ is a Haar measure on $\mathbb{H}^n$. We denote it by
	$$
	d\eta:=dxdydt.
	$$
	The homogeneous dimension of $\mathbb{H}^n$ is $Q=2n+2$.  For $j=1,\ldots,n$, the system of vector fields, given as differential operators
	$$ 
	X_j := \frac{\partial}{\partial x_j} + 2y_j \frac{\partial}{\partial t},\ \ \ \ \ Y_j := \frac{\partial}{\partial y_j} - 2x_j \frac{\partial}{\partial t}
	$$
	and
	\begin{equation}\label{eqn{partial-T-define}}
		T := \frac{\partial}{\partial t}
	\end{equation}
	forms a basis of the left-invariant vector fields on $ \mathbb{H}^n $. The vectors $X_j $ and $Y_j $ ($j =1, \dots, n$) form a basis of the horizontal bundle and $T_\theta=2T$ ($T_\theta $ is the characteristic direction on $\mathbb H^n$). In order to be consistent with the fractional operator used below, we use the positive normalization of the standard sub-Laplacian
	$$
	\mathcal{L}:=-\frac{1}{4}\sum_{j=1}^n\left(X_j^2+Y_j^2\right).
	$$
	
	The sphere $\mathbb S^{2n+1}$ is the boundary of the unit ball in $\mathbb{C}^{n+1}$. In coordinates, $\zeta = (\zeta_1, \ldots, \zeta_{n+1}) \in \mathbb S^{2n+1}$ if and only if $\zeta \cdot \bar{\zeta} = \sum_{j=1}^{n+1} |\zeta_j|^2 = 1$. The standard Euclidean surface measure on $\mathbb{S}^{2n+1}$ is denoted by $d\zeta$, and the measure of $\mathbb{S}^{2n+1}$ is denoted by $\omega_{2n+1}$. It follows from \cite[Section 3.3]{Frank-Gonzalez-Monticelli-Tan2015} that the Heisenberg group $\mathbb{H}^n$ and the sphere $\mathbb{S}^{2n+1}$ are connected by the Cayley transform
	$$
	\mathcal{C}:\mathbb{H}^n\rightarrow\mathbb{S}^{2n+1}\setminus\{(0,\ldots,0,-1)\},
	$$
	which is defined by
	$$
	\mathcal{C}(\eta)=\mathcal{C}(z,t):=\left(\frac{2iz}{t+i(1+|z|^2)},\frac{-t+i(1-|z|^2)}{t+i(1+|z|^2)}\right)=\left(\frac{2z}{1+|z|^2-it},\frac{1-|z|^2+it}{1+|z|^2-it}\right).
	$$
	Its inverse is given by
	$$
	\eta=\mathcal{C}^{-1}(\zeta)=\left(\frac{\zeta_1}{1+\zeta_{n+1}},\ldots,\frac{\zeta_n}{1+\zeta_{n+1}},\operatorname{Re}\left(i\frac{1-\zeta_{n+1}}{1+\zeta_{n+1}}\right)\right),
	$$
	where $\zeta=(\zeta_1,\ldots,\zeta_{n+1})\in\mathbb{S}^{2n+1}\setminus\{(0,\ldots,0,-1)\}$. The Jacobian determinant (acting as a volume density) of this transformation is explicitly given by
	\begin{equation}\label{Jc-define}
		|J_{\mathcal{C}}(\eta)|=|J_{\mathcal{C}}(z,t)|  := \frac{2^{2n+1}}{\left( (1 + |z|^2)^2 + t^2 \right)^{n+1}}= \frac{2^{Q-1}}{\left( (1 + |z|^2)^2 + t^2 \right)^{n+1}}.
	\end{equation}
	Consequently, for any integrable function $F$ on $\mathbb{S}^{2n+1}$, one has
	\begin{equation}\label{eqn:int-F-FC}
		\int_{\mathbb S^{2n+1}} F d\zeta = \int_{\mathbb{H}^n} (F \circ \mathcal{C})|J_{\mathcal{C}}| d\eta.
	\end{equation}
	The homogeneous norm on $\mathbb{H}^n$ is defined as
	\begin{equation}\label{norm-define}
		|\eta|=|(z, t)| := (|z|^4 + t^2)^\frac{1}{4},
	\end{equation}
	and the distance between $\eta = (z, t)$ and $\tilde{\eta} = (\tilde{z}, \tilde{t} )$ is
	$$
	d(\eta,\tilde{\eta}) := |\tilde{\eta}^{-1}\eta| = \left(|z - \tilde{z}|^4 + (t - \tilde{t} +2\operatorname{Im}(z \cdot \bar{\tilde{z}}))^2\right)^\frac{1}{4}.
	$$
	For any $\zeta,\tilde{\zeta}\in\mathbb S^{2n+1}$, the distance function on $\mathbb S^{2n+1}$ is defined by
	$$
	d(\zeta, \tilde{\zeta}) := (2|1 - \zeta \cdot \bar{\tilde{\zeta}}|)^\frac{1}{2} = \left| |\zeta - \tilde{\zeta}|^2 - 2i \, \operatorname{Im} (\zeta \cdot \bar{\tilde{\zeta}}) \right|^\frac{1}{2}  = \left( |\zeta - \tilde{\zeta}|^4 + 4 \cdot \operatorname{Im}^2(\zeta \cdot \bar{\tilde{\zeta}}) \right)^\frac{1}{4}.
	$$
	Moreover, for $\eta = (z, t)$, $\tilde{\eta} = (\tilde{z}, \tilde{t})$ and $\zeta = \mathcal{C}(\eta)$, $\tilde{\zeta} = \mathcal{C}(\tilde{\eta})$, a simple calculation shows that
	$$
	\frac{|1 - \zeta \cdot \bar{\tilde{\zeta}}|}{2} = |\tilde{\eta}^{-1}\eta|^2 \left(\frac{1}{ (1 + |z|^2)^2 + t^2} \right)^{\frac{1}{2}} \left(\frac{1}{ (1 + |\tilde{z}|^2)^2 + \tilde{t}^2 }\right)^{\frac{1}{2}},
	$$
	which means
	$$
	d(\zeta,\tilde{\zeta}) = d(\eta, \tilde{\eta}) \left( \frac{4}{(1 + |z|^2)^2 + t^2} \right)^\frac{1}{4} \left( \frac{4}{(1 + |\tilde{z}|^2)^2 + \tilde{t}^2} \right)^\frac{1}{4}.
	$$
	The Hilbert space $ L^2(\mathbb S^{2n+1}) $, endowed with the inner product
	$$
	(U, V)_{L^2(\mathbb S^{2n+1})} := \int_{\mathbb S^{2n+1}} U \overline{V} \, d\zeta,
	$$
	admits the orthogonal decomposition 
	$$
	L^2(\mathbb S^{2n+1}) = \bigoplus_{j,k \geq 0} \mathcal H_{j,k},
	$$ 
	where $\mathcal H_{j,k} $ is the space of harmonic polynomials $ p(z,\bar{z})$ on $ \mathbb{C}^{n+1} $ restricted to $\mathbb S^{2n+1}$ that are homogeneous of degree $ j$ and $k $ in the variables $z$ and $\bar{z}$, respectively. The dimension of $\mathcal H_{j,k} $ is
	\begin{equation}\label{eqn:mathcal-H-m-ij-dim}
		\dim(\mathcal H_{j,k}) = m_{j,k} := \frac{(j + n - 1)!(k + n - 1)!(j + k + n)}{n!(n - 1)!j!k!}.
	\end{equation}
	Let $\{Y_{j,k}^{\ell}\}_{l=1,\dots,m_{j,k}}$ be an orthonormal basis for $\mathcal H_{j,k} $. Then the corresponding zonal harmonics are denoted by
	$$
	\Phi_{j,k}(\zeta, \tilde{\zeta}) := \sum_{\ell=1}^{m_{j,k}} Y_{j,k}^{\ell}(\zeta) \overline{Y_{j,k}^{\ell}(\tilde{\zeta})},
	$$
	which can be represented as
	$$
	\Phi_{j,k}(\zeta, \tilde{\zeta}) =\Phi_{j,k}(\zeta\cdot\bar{\tilde{\zeta}}):= \frac{(j+ n - 1)!(j + k + n)}{\omega_{2n+1}n! j !} (\zeta \cdot \bar{\tilde{\zeta}})^{j-k} P_k^{(n-1,j-k)}(2|\zeta \cdot \bar{\tilde{\zeta}}|^2 - 1)
	$$
	if $k\leq j$, and $\Phi_{j,k}(\zeta, \tilde{\zeta}) =\Phi_{j,k}(\zeta\cdot\bar{\tilde{\zeta}}):=\overline{\Phi_{k,j}(\zeta\cdot\bar{\tilde{\zeta}})}$ if $j\leq k$,
	where $ P_k^{(n,\ell)}$ are the Jacobi polynomials (see \cite{Vilenkin-Klimyk1993}) and $\omega_{2n+1}$ is the usual  measure of $\mathbb S^{2n+1}$. For any $ U \in L^2(\mathbb S^{2n+1}) $, its Fourier expansion is
	$$
	U = \sum_{j,k \geq 0} \sum_{l=1}^{m_{j,k}} c_{j,k}^l (U)Y_{j,k}^l
	$$
	with Fourier coefficients $c_{j,k}^l(U) = \int_{\mathbb S^{2n+1}} UY_{j,k}^l d\zeta$. Accordingly, 
	$$
	{D}^\frac{\gamma}{2} U := \sum_{j,k \geq 0} \sum_{l=1}^{m_{j,k}} (\nu_j \nu_k)^\frac{\gamma}{2} c_{j,k}^l (U)Y_{j,k}^l,
	$$
	where $0<\gamma<\frac{Q}{2}$ and $\nu_j:=j+\frac{n}{2}$.
	The fractional Sobolev space over $\mathbb S^{2n+1} $ is defined as
	$$
	H^{\gamma}(\mathbb S^{2n+1}) = W^{\gamma,2}(\mathbb S^{2n+1}) := \Big\{ U \in L^2(\mathbb S^{2n+1}) : {D}^\frac{\gamma}{2} U \in L^2(\mathbb S^{2n+1}) \Big\},
	$$
	endowed with the inner product and norm
	$$
	(U,V)_{H^{\gamma}(\mathbb S^{2n+1})} := \int_{\mathbb S^{2n+1}} {D}^\frac{\gamma}{2}U \overline{{D}^\frac{\gamma}{2} V} d\zeta
	$$
	and
	$$
	\|U\|_{H^{\gamma}(\mathbb S^{2n+1})} =\sqrt{ (U,U)_{H^{\gamma}(\mathbb S^{2n+1})}}:= \Big( \sum_{j,k \geq 0} \sum_{l=1}^{m_{j,k}} (\nu_j \nu_k)^{\gamma} |c_{j,k}^l(U)|^2 \Big)^\frac{1}{2}.
	$$
	The norm $ \|\cdot\|_{H^{\gamma}(\mathbb S^{2n+1})} $ is equivalent to the norm induced by the inner product
	$$
	(U,V)_{\gamma} := \sum_{j,k \geq 0} \sum_{l=1}^{m_{j,k}} \lambda_j(\gamma) \lambda_k(\gamma) c_{j,k}^l(U) \overline{c_{j,k}^l(V)},
	$$
	where
	\begin{equation}\label{eqn:lambda-j-define}
		\lambda_j(\gamma) := \frac{\Gamma\Big(\frac{Q+2\gamma}{4} + j\Big)}{\Gamma\Big(\frac{Q-2\gamma}{4} + j\Big)}
	\end{equation}
	for $j \in\{0, 1, 2, \dots\} $. Indeed, by asymptotic approximation of the Gamma function $\Gamma$, one has $\lambda_j(\gamma) \sim j^\gamma$. The intertwining operator $\mathcal{A}_\gamma$ of order $2\gamma$ on $\mathbb S^{2n+1}$ is given by
	$$
	|J_{\hat{\tau}}|^{\frac{Q+2\gamma}{2Q}}(\mathcal{A}_\gamma U)\circ\hat{\tau}= \mathcal{A}_\gamma\Big( |J_{\hat{\tau}}|^{\frac{Q-2\gamma}{2Q}}(U \circ\hat{\tau}) \Big) 
	$$
	for all $\hat{\tau} \in \operatorname{Aut}(\mathbb S^{2n+1})$ and $U \in C^\infty(\mathbb S^{2n+1})$, where $\operatorname{Aut}(\mathbb S^{2n+1})$ and $J_{\hat{\tau}}$ denote the group of automorphisms on $\mathbb S^{2n+1}$ and the Jacobian of $\hat{\tau} \in \operatorname{Aut}(\mathbb S^{2n+1})$, respectively. In fact, the latter definition can be extended to every $U \in H^\gamma(\mathbb S^{2n+1})$. The intertwining property determines $\mathcal{A}_\gamma$ up to a multiplicative constant, and we fix the normalization by requiring
	\begin{equation}\label{eqn:A-gamma-action}
		\mathcal{A}_\gamma Y_{j,k} = \lambda_j(\gamma)\lambda_k(\gamma)Y_{j,k}, \quad Y_{j,k} \in \mathcal{H}_{j,k},
	\end{equation}
	see \cite[(1.11)]{Branson-Fontana-Morpurgo2013} for details.
	It also follows from (\ref{eqn:lambda-j-define}) and (\ref{eqn:A-gamma-action}) that the intertwining operator $\mathcal{A}_\gamma$ possesses the eigenvalues
	\begin{equation}\label{eqn:lambda-i-j-expression}
		\lambda_{j,k}(\gamma) := \lambda_{j}(\gamma)\lambda_{k}(\gamma)=
		\frac{\Gamma\left(\frac{Q+2\gamma}{4} + j\right)\Gamma\left(\frac{Q+2\gamma}{4} + k\right)}{\Gamma\left(\frac{Q-2\gamma}{4} + j\right)\Gamma\left(\frac{Q-2\gamma}{4} + k\right)} .
	\end{equation}
	Therefore,
	$$
	(U,V)_\gamma = \int_{\mathbb S^{2n+1}} \overline{V}\mathcal{A}_\gamma U d\zeta. 
	$$
	Moreover, according to Frank and Lieb \cite{Frank-Lieb2012}, for every real-valued function $U \in H^\gamma(\mathbb S^{2n+1})$, the following sharp fractional Sobolev inequality holds on the CR sphere $\mathbb S^{2n+1}$
	\begin{equation}\label{eqn:sharp-Sobolev-CR}
		\Big( \int_{\mathbb S^{2n+1}} |U(\zeta)|^{\frac{2Q}{Q-2\gamma}} d\zeta \Big)^{\frac{Q-2\gamma}{Q}} \leq \frac{\Gamma\Big(\frac{n+1-\gamma}{2}\Big)^2}{\Gamma\Big(\frac{n+1+\gamma}{2}\Big)^2} \omega_{2n+1}^{-\frac{\gamma}{n+1}} \int_{\mathbb S^{2n+1}} U(\zeta)\mathcal{A}_\gamma U(\zeta) d\zeta.
	\end{equation}
	It follows immediately from the definition by direct computation that $\mathcal{L}_1 = \mathcal{L}$, $\mathcal{L}_2 = \mathcal{L}^2 - |T|^2$. Moreover, \cite[Section 4]{Frank-Gonzalez-Monticelli-Tan2015} implies that $\mathcal P^\theta_\gamma$ agrees with the intertwining operators on the CR sphere calculated in \cite{Branson-Fontana-Morpurgo2013}, hence the relation between $\mathcal{L}_\gamma$ and $\mathcal{A}_\gamma$ is
	\begin{equation}\label{eqn:relation-L-A}
		\begin{aligned}
			&2^{-\gamma}\mathcal P^\theta_\gamma\Big( (2|J_\mathcal{C}|)^{\frac{Q-2\gamma}{2Q}} (U\circ \mathcal{C}) \Big)\\ &=\mathcal{L}_\gamma\Big( (2|J_\mathcal{C}|)^{\frac{Q-2\gamma}{2Q}} (U\circ \mathcal{C}) \Big)\\
			& = (2|J_\mathcal{C}|)^{\frac{Q+2\gamma}{2Q}} (\mathcal{A}_\gamma U) \circ\mathcal{C}, \ \ U \in H^\gamma(\mathbb S^{2n+1}).
		\end{aligned}
	\end{equation}
	The multiplicative factor $2$ appears because we use the standard volume elements for $\mathbb{H}^n$ and $\mathbb{S}^{2n+1}$ instead of the volume forms associated with the standard contact forms $\theta_0$ and $\theta$ of these two spaces. Further details are provided in \cite[Pages 11 and 21]{Branson-Fontana-Morpurgo2013}. Moreover, the fractional Sobolev space over $\mathbb{H}^n$ is defined by
	$$
	\mathcal D^\gamma(\mathbb{H}^n) := \Big\{ u \in L^{\frac{2Q}{Q-2\gamma}}(\mathbb{H}^n) : a_\gamma[u] < +\infty \Big\},
	$$
	where the quadratic form $ a_\gamma $ is associated with the operator $ \mathcal{L}_\gamma $, that is,
	\begin{equation}\label{eqn:a-gamma-u-define}
		a_\gamma[u] := \frac{1}{2^\gamma}\int_{\mathbb{H}^n}u \mathcal{P}^\theta_\gamma u d\eta=\int_{\mathbb{H}^n}u \mathcal{L}_\gamma u d\eta.
	\end{equation}
	The form $ a_\gamma $ can be equivalently represented by means of spectral decomposition, see \cite[Page 126]{Frank-Gonzalez-Monticelli-Tan2015}. Moreover, we equip
	$\mathcal D^\gamma(\mathbb H^n)$ with the inner product
	\begin{equation}\label{eqn:sec2-D-int-defn}
		(u,v)_{\mathcal D^\gamma(\mathbb H^n)}:=\int_{\mathbb H^n}v \mathcal L_\gamma ud\eta,
		\ \  u,v\in\mathcal D^\gamma(\mathbb H^n).
	\end{equation}
	The corresponding norm is given by
	$$
	\|u\|_{\mathcal D^\gamma(\mathbb H^n)}=\sqrt{(u,u)_{\mathcal D^\gamma(\mathbb H^n)}}
	=(a_\gamma[u])^\frac{1}{2}.
	$$
	We denote its dual space by $\mathcal{D}^{-\gamma}(\mathbb{H}^{n})$, so that the fractional sub-Laplacian $\mathcal{L}_{\gamma}$ acts as an isometry from $\mathcal{D}^{\gamma}(\mathbb{H}^{n})$ to $\mathcal{D}^{-\gamma}(\mathbb{H}^{n})$. We use \(\langle\cdot,\cdot\rangle\) to denote the duality pairing between \(\mathcal D^{-\gamma}(\mathbb H^n)\) and \(\mathcal D^\gamma(\mathbb H^n)\). For any fixed $\hat{\xi} \in \mathbb{H}^n$, we define the left translation $\tau_{\hat{\xi}} : \mathbb{H}^n \to \mathbb{H}^n$ by $\tau_{\hat{\xi}}(\eta) := \hat{\xi} \eta$, utilizing the group law introduced in (\ref{cric-define}). Furthermore, for any $\lambda > 0$, we define the dilation $\delta_{\lambda} : \mathbb{H}^n \to \mathbb{H}^n$ as
	$$
	\delta_{\lambda}(\eta)=\delta_{\lambda}(z,t) := (\lambda z, \lambda^2 t).
	$$
	It is straightforward to verify that $\delta_{\lambda}$ is a group automorphism satisfying 
	$$
	\delta_{\lambda}(\hat{\xi}  \eta) = \delta_{\lambda}(\hat{\xi})  \delta_{\lambda}(\eta)
	$$ 
	for any $\eta, \hat{\xi} \in \mathbb{H}^n$. Observe that the norm $|\cdot|$ defined in (\ref{norm-define}) is homogeneous of degree one with respect to these dilations, that is,
	$$
	|\delta_{\lambda}(\eta)|= \lambda |\eta| 
	$$
	for any $\eta \in \mathbb{H}^n$ and $\lambda > 0$. Finally, let $B_d(\eta, r)$ denote the Kor\'anyi ball centered at $\eta$ with radius $r$ associated with the distance $d$. By the properties of translation and dilation, these balls satisfy the relations 
	$
	B_d(\eta, r) = \tau_{\eta}(B_d(0, r))
	$ 
	and 
	$
	B_d(0, r) = \delta_r(B_d(0, 1)).
	$
	
	We now recall the sharp Hardy-Littlewood-Sobolev inequality on $\mathbb{H}^n$.
	\begin{proposition}\cite[Theorem 2.1]{Frank-Lieb2012}\label{Frank-Lieb-Theorem}
		Let $0 < d < Q$ and $p := \frac{2Q} {2Q - d }$. Then for any $f, g \in L^p(\mathbb{H}^n)$,
		$$
		\Big| \int_{\mathbb{H}^n}\int_{\mathbb{H}^n} \frac{\overline{f(\eta)}g(\xi)}{|\eta^{-1}\xi|^d} \, d\eta \, d\xi \Big| \le \Big( \frac{\pi^{n+1}}{2^{n-1}n!} \Big)^\frac{d}{Q} \frac{n! \Gamma(\frac{Q-d}{2})}{\Gamma^2(\frac{2Q-d}{4})} \|f\|_p \|g\|_p,
		$$
		with equality if and only if
		$$
		f(\eta) = c H(\bar{\delta}(a^{-1}\eta)), \quad g(\xi) = c' H(\bar{\delta}(a^{-1}\xi))
		$$
		for some $c, c' \in \mathbb{C}$, $\bar{\delta}> 0$ and $a \in \mathbb{H}^n$ (unless $f \equiv 0$ or $g \equiv 0$).  Here 
		$$
		H(\eta)=H(z,t):=\frac{1}{((1+|z|^2)^2+t^2)^\frac{2Q-d}{4}}.
		$$
	\end{proposition}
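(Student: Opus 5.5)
Since this proposition is quoted from \cite[Theorem 2.1]{Frank-Lieb2012}, the paper itself relies on the citation. If I had to reprove it, I would follow the Frank--Lieb strategy: transfer the problem to the CR sphere, prove that an optimizer exists, use conformal invariance to put the optimizer in a balanced position, and then show via a second-variation argument that a balanced optimizer is constant. The sharp constant then comes from evaluating the functional at the constant function.

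\textbf{Step 1: transfer to the sphere.} Using the Cayley transform $\mathcal C$, the Jacobian (\ref{Jc-define}) and the distance identity relating $d(\zeta,\tilde\zeta)$ to $|\tilde\eta^{-1}\eta|$ from Section \ref{sec:preliminary}, I set $F=(f\,|J_{\mathcal C}|^{-1/p})\circ\mathcal C^{-1}$, and similarly $G$ from $g$. The kernel $|\eta^{-1}\xi|^{-d}$ becomes $c\,|1-\zeta\cdot\bar{\tilde\zeta}|^{-d/2}$, and $L^p$ norms are preserved because $p=\frac{2Q}{2Q-d}$. So the inequality is equivalent to an HLS inequality on $\mathbb S^{2n+1}$. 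That spherical inequality is invariant under $\operatorname{Aut}(\mathbb S^{2n+1})$ acting through the weights $|J_{\hat\tau}|^{1/p}$. The candidate optimizer $H$ corresponds to the constant function on the sphere.

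\textbf{Step 2: existence of an optimizer.} Positivity of the kernel lets me reduce to $f=g\ge0$. I would then apply a concentration-compactness argument, or a Brezis--Lieb/missing-mass argument, to maximizing sequences on $\mathbb H^n$, normalizing translations and dilations. The subtle point is ruling out dichotomy. In the Euclidean case one can use symmetric rearrangement, but that is unavailable on $\mathbb H^n$. I would instead use strict superadditivity of the optimal constant in the $p$-norm splitting ($p<2$), combined with the Lieb translation lemma adapted to Kor\'anyi balls.

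\textbf{Step 3: balancing by conformal invariance.} Given an optimizer $F\ge0$ on the sphere, I would use a degree argument over the automorphisms to find $\hat\tau$ such that the transformed optimizer $\tilde F$ has vanishing center of mass: $\int \zeta_j |\tilde F|^p\,d\zeta=0$ for $j=1,\dots,n+1$. The relevant automorphisms are the dilation-type ones, parametrized by the unit ball of $\mathbb C^{n+1}$. The plan is Hersch's trick: the map from the ball to its own closure given by the center of mass extends continuously to the identity on the boundary, so it hits $0$.

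\textbf{Step 4: second variation (the main obstacle).} I would perturb $\tilde F$ by $\tilde F+\epsilon\,\zeta_j\tilde F^{\,p-1}$, and similarly with $\bar\zeta_j$, and use the balancing to kill the first-order term. Maximality gives an inequality between the bilinear form evaluated on $\zeta_j\tilde F^{p-1}$ and on $\tilde F^{p-1}$. This is the hardest step, and the ingredient that replaces rearrangement. By the Funk--Hecke formula on $\mathbb S^{2n+1}$, the kernel $|1-\zeta\cdot\bar{\tilde\zeta}|^{-d/2}$ is diagonal on each $\mathcal H_{j,k}$, with explicit Gamma-ratio eigenvalues analogous to (\ref{eqn:lambda-i-j-expression}). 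I would then establish a commutator-type identity: summing over $j$ and using $\sum|\zeta_j|^2=1$ turns the second-order condition into a reverse inequality. The eigenvalue comparison between $\mathcal H_{0,0}$ and $\mathcal H_{1,0}\oplus\mathcal H_{0,1}$ should give the reverse inequality unless $\tilde F^{p-1}$ lies in $\mathcal H_{0,0}$, that is, unless $\tilde F$ is constant.

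\textbf{Step 5: sharp constant and equality cases.} Evaluating the functional at the constant function gives the explicit constant $\big(\frac{\pi^{n+1}}{2^{n-1}n!}\big)^{d/Q}\frac{n!\,\Gamma(\frac{Q-d}{2})}{\Gamma^2(\frac{2Q-d}{4})}$. Pulling the constant optimizer back through $\hat\tau$ and $\mathcal C$ yields exactly the family $cH(\bar\delta(a^{-1}\eta))$. The case $f\neq g$ follows because equality forces equality in the Cauchy--Schwarz step for the positive definite kernel, which makes $f$ and $g$ proportional.
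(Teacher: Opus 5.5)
The paper gives no proof of this proposition; it quotes \cite[Theorem 2.1]{Frank-Lieb2012}. Your Steps 1--3 and 5 follow the Frank--Lieb strategy correctly in outline. The gap is in Step 4, which is the heart of the argument, and as you set it up it does not close.

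\textbf{The perturbation direction is wrong.} Let $\mathcal I[F]=\iint F(\zeta)\,|1-\zeta\cdot\bar\eta|^{-d/2}F(\eta)\,d\zeta\,d\eta$, and let $h=\tilde F>0$ be the balanced maximizer of $\mathcal I[F]/\|F\|_p^2$. The first variation vanishes in every direction simply because $h$ is critical, so balancing is not what kills it. What balancing gives is the orthogonality $\int h^{p-1}u\,d\zeta=0$. This removes the positive cross term $\bigl(\int h^{p-1}u\bigr)^2$ from the second derivative of $\|h+\epsilon u\|_p^2$. For that you need the real directions $u=\operatorname{Re}(\zeta_j)h$ and $u=\operatorname{Im}(\zeta_j)h$. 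Then $\int h^{p-1}u$ is a component of the center of mass of $h^p$, and $\sum_j\int h^{p-2}u^2=\int|\zeta|^2h^p=\int h^p$. Maximality then gives
\[
\iint h(\zeta)\,\operatorname{Re}(\zeta\cdot\bar\eta)\,|1-\zeta\cdot\bar\eta|^{-d/2}\,h(\eta)\,d\zeta\,d\eta\le(p-1)\,\mathcal I[h].
\]
The directions must be real: the complex directions $\zeta_jh$ and $i\zeta_jh$ only give the constant $p/2>p-1$, which is too weak to conclude. With your choice $u=\zeta_j\tilde F^{p-1}$, the required orthogonality would be $\int\zeta_j\tilde F^{2p-2}=0$, which balancing does not provide. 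The denominator also produces $\int\tilde F^{3p-4}$, which cannot be expressed through $\mathcal I$. So no inequality between $\mathcal I[\zeta_j\tilde F^{p-1}]$ and $\mathcal I[\tilde F^{p-1}]$ comes out. Perturbing $g=\tilde F^{p-1}$ by $\zeta_j g$ is the natural move in the dual Sobolev formulation, but there the relevant form is that of $\mathcal A_\gamma$, not the HLS kernel.

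\textbf{The eigenvalue comparison you name is not the one needed.} Since $\operatorname{Re}(\zeta\cdot\bar\eta)=1-\operatorname{Re}(1-\zeta\cdot\bar\eta)$, the display says $(2-p)\,\mathcal I[h]\le\iint h\,\operatorname{Re}(1-\zeta\cdot\bar\eta)\,|1-\zeta\cdot\bar\eta|^{-d/2}h$. You must prove the reverse inequality for every real $h$, with equality only for constants. This is a statement on every $\mathcal H_{j,k}$, not a comparison of $\mathcal H_{0,0}$ with $\mathcal H_{1,0}\oplus\mathcal H_{0,1}$. It also involves a kernel that is not a function of $|1-\zeta\cdot\bar\eta|$ alone, so Funk--Hecke for $|1-\zeta\cdot\bar\eta|^{-d/2}$ does not give it.

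Write the new kernel as half the sum of $(1-\zeta\cdot\bar\eta)^{1-d/4}(1-\overline{\zeta\cdot\bar\eta})^{-d/4}$ and its complex conjugate. Diagonalize the kernels $(1-\zeta\cdot\bar\eta)^{-\alpha}(1-\overline{\zeta\cdot\bar\eta})^{-\beta}$ on $\mathcal H_{j,k}$ by Gamma-function formulas. With $a=d/4$ and $m=n+1$, the ratio of the new kernel's eigenvalue on $\mathcal H_{j,k}$ to that of $|1-\zeta\cdot\bar\eta|^{-d/2}$ is
\[
\frac{(m-2a)(a-1)}{2}\Big[\frac{1}{(j+a-1)(m+k-a)}+\frac{1}{(k+a-1)(m+j-a)}\Big].
\]
This equals $\frac{m-2a}{m-a}=2-p$ at $(j,k)=(0,0)$ and is strictly smaller for every other $(j,k)$. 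For $a<1$ the summands with $j\ge1$, respectively $k\ge1$, are negative; the case $a=1$ follows by continuity. Only this bound, combined with the display, forces $h\in\mathcal H_{0,0}$. It is the ingredient that replaces rearrangement, and without it Step 4 remains a heuristic.
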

	Since the fundamental solution of $\mathcal L_\gamma$ is given by (see \cite[(1.34)]{Branson-Fontana-Morpurgo2013}) 
	$$
	\mathcal L^{-1}_\gamma (\eta,\xi)=\frac{2^{n-\gamma-1}\Gamma^2(\frac{Q-2\gamma}{4})}{\pi^{n+1}\Gamma(\gamma)}|\eta^{-1}\xi|^{2\gamma -Q},
	$$
	using a duality argument (similar to \cite[Theorem 8.3]{Lieb-Loss2001} for the Euclidean case) and setting $d=Q-2\gamma$, Proposition \ref{Frank-Lieb-Theorem} is equivalent to the following:
	\begin{proposition}\cite[Theorem 2.2]{Chen-Wang2017}\label{Chen-Wang2017-result}
		Let $\gamma \in (0, \frac{Q}{2})$ and set $q := \frac{2Q}{Q-2\gamma}$. Then for any $u \in \mathcal D^\gamma(\mathbb{H}^n)$, it holds that
		$$
		\|u\|_{L^{q}(\mathbb{H}^n)}^2 \le \frac{2^{n-\gamma-1-\frac{2(n-1)\gamma}{Q}} (\pi^{n+1})^{\frac{2\gamma}{Q}-1}}{(n!)^{\frac{2\gamma}{Q}-1}} \cdot \frac{\Gamma(\frac{Q-2\gamma}{2}) \Gamma^2(\frac{Q-2\gamma}{4})}{\Gamma^2(\frac{Q-\gamma}{2}) \Gamma(\gamma)} a_\gamma[u],
		$$
		where $a_\gamma$ is given in (\ref{eqn:a-gamma-u-define}).	Moreover, all optimizers for the above inequality are translates, dilations and constant multiples of 
		\begin{equation}\label{eqn:omega-define}
			\omega_0(\eta) = \omega_0(z,t): =  \frac{1}{((1+|z|^2)^2 + t^2)^\frac{Q-2\gamma}{4}}.
		\end{equation}
	\end{proposition}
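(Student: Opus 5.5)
The plan is to derive the inequality from Proposition \ref{Frank-Lieb-Theorem} by duality, with $d=Q-2\gamma$ and the fundamental solution of $\mathcal L_\gamma$ quoted above. With this choice of $d$, the HLS exponent is $p=\frac{2Q}{2Q-d}=\frac{2Q}{Q+2\gamma}=q'$, the conjugate exponent of $q=\frac{2Q}{Q-2\gamma}$. Write
$$
c_{n,\gamma}:=\frac{2^{n-\gamma-1}\Gamma^2(\frac{Q-2\gamma}{4})}{\pi^{n+1}\Gamma(\gamma)},
\qquad
\mathcal L_\gamma^{-1}g(\eta)=c_{n,\gamma}\int_{\mathbb H^n}|\eta^{-1}\xi|^{2\gamma-Q}g(\xi)\,d\xi .
$$

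\textbf{Step 1: $L^{p}$ embeds in $\mathcal D^{-\gamma}$.} Proposition \ref{Frank-Lieb-Theorem} with $f=g$ gives, for every real $g\in L^{p}(\mathbb H^n)$,
$$
\int_{\mathbb H^n} g\,\mathcal L_\gamma^{-1}g\,d\eta\le c_{n,\gamma}C_{HLS}(n,Q-2\gamma)\|g\|_{p}^2 .
$$
Since $\mathcal L_\gamma:\mathcal D^\gamma(\mathbb H^n)\to\mathcal D^{-\gamma}(\mathbb H^n)$ is an isometry, the left-hand side equals $\|g\|^2_{\mathcal D^{-\gamma}}=a_\gamma[\mathcal L_\gamma^{-1}g]$. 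This requires checking that the convolution kernel above really inverts $\mathcal L_\gamma$ on $\mathcal D^{-\gamma}$. I would verify this first on $C_c^\infty$ (or Schwartz) functions, using the spectral description of $a_\gamma$ from \cite{Frank-Gonzalez-Monticelli-Tan2015}, and then pass to $L^p$ by density.

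\textbf{Step 2: duality.} For $u\in\mathcal D^\gamma$ and $g\in L^{p}$, Cauchy--Schwarz in $\mathcal D^\gamma$ gives
$$
\int_{\mathbb H^n} ug\,d\eta=(u,\mathcal L_\gamma^{-1}g)_{\mathcal D^\gamma}\le a_\gamma[u]^{1/2}\,\big(c_{n,\gamma}C_{HLS}\big)^{1/2}\|g\|_{p}.
$$
Taking the supremum over $\|g\|_p=1$ yields $\|u\|_q^2\le c_{n,\gamma}C_{HLS}\,a_\gamma[u]$. Conversely, choosing $g=|u|^{q-2}u$ shows that this constant is sharp exactly when the HLS constant is. It then remains to multiply out
$$
c_{n,\gamma}\Big(\tfrac{\pi^{n+1}}{2^{n-1}n!}\Big)^{\frac{Q-2\gamma}{Q}}\frac{n!\,\Gamma(\gamma)}{\Gamma^2(\frac{Q+2\gamma}{4})}
$$
and simplify the powers of $2$, $\pi$ and $n!$ to obtain the stated expression.

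\textbf{Step 3: optimizers.} Equality for $u\neq 0$ forces equality in both the Cauchy--Schwarz and the HLS steps with $g=|u|^{q-2}u$. Hence $g=cH(\bar\delta(a^{-1}\cdot))$ with $H=((1+|z|^2)^2+t^2)^{-\frac{Q+2\gamma}{4}}$, and $u$ is proportional to $\mathcal L_\gamma^{-1}g$.

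To identify $\mathcal L_\gamma^{-1}H$, use the Cayley relation (\ref{eqn:relation-L-A}) with $U\equiv1$. Since $\mathcal A_\gamma 1=\lambda_0(\gamma)^2$ and $|J_{\mathcal C}|\propto((1+|z|^2)^2+t^2)^{-(n+1)}$, we get $\omega_0\propto(2|J_{\mathcal C}|)^{\frac{Q-2\gamma}{2Q}}$ and
$$
\mathcal L_\gamma\omega_0=\mathrm{const}\cdot(2|J_{\mathcal C}|)^{\frac{Q+2\gamma}{2Q}}=\mathrm{const}\cdot H .
$$
Therefore $u$ is a constant multiple of a translate and dilate of $\omega_0$, using that $\mathcal L_\gamma$ commutes with left translations and scales correctly under $\delta_\lambda$. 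Conversely, such $u$ saturate both inequalities.

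\textbf{Main obstacle.} I expect the hardest points to be the rigorous identification of the kernel operator with $\mathcal L_\gamma^{-1}$ on the whole of $\mathcal D^{-\gamma}$ in Step 1, including the real versus complex pairing, and the bookkeeping of the Gamma and power factors in Step 2. For the latter, I would use the duplication formula only to cross-check the final constant, and otherwise rely on the explicit HLS constant of Proposition \ref{Frank-Lieb-Theorem}.
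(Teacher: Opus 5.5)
Your route is the one the paper indicates: it gives no proof of its own, only the remark that the proposition follows from Proposition \ref{Frank-Lieb-Theorem} by Lieb--Loss duality with $d=Q-2\gamma$. Steps 1--3 are sound, modulo the identification of the Riesz potential with $\mathcal L_\gamma^{-1}$ that you already flag.

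\textbf{The gap.} The last sentence of Step 2 fails: the product you write down cannot be simplified to the stated expression. In it $\Gamma(\gamma)$ cancels, and $\frac{2^{n-1}n!}{\pi^{n+1}}\cdot\frac{\pi^{n+1}}{2^{n-1}n!}=1$, so it equals
\[
2^{-\gamma}\Big(\frac{2^{n-1}n!}{\pi^{n+1}}\Big)^{\frac{2\gamma}{Q}}\frac{\Gamma^2(\frac{Q-2\gamma}{4})}{\Gamma^2(\frac{Q+2\gamma}{4})}
=\Big(\frac{n!}{4\pi^{n+1}}\Big)^{\frac{2\gamma}{Q}}\frac{\Gamma^2(\frac{Q-2\gamma}{4})}{\Gamma^2(\frac{Q+2\gamma}{4})}.
\]
This contains neither $\Gamma(\frac{Q-2\gamma}{2})$ nor $\Gamma(\frac{Q-\gamma}{2})$. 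The printed constant is exactly your fundamental-solution constant times the Frank--Lieb constant evaluated at $d=2\gamma$ instead of $d=Q-2\gamma$. That substitution is incompatible with the exponent $p=\frac{2Q}{Q+2\gamma}$. The two constants agree at $\gamma=\frac Q4$ but not in general, so no Gamma-function identity (duplication or otherwise) will reconcile them.

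\textbf{Your constant is the correct one.} Transport the sphere inequality \eqref{eqn:sharp-Sobolev-CR} through $\|\rho\|_q^q=\frac12\|g\|_q^q$ from \eqref{eqn:mathcalS-Hn-norm-eq} and $a_\gamma[g]=2\int_{\mathbb S^{2n+1}}\rho\,\mathcal A_\gamma\rho\,d\zeta$ from \eqref{eqn:energy-identity}. With $\omega_{2n+1}=\frac{2\pi^{n+1}}{n!}$, this gives the sharp constant $(2\omega_{2n+1})^{-2\gamma/Q}\,\Gamma^2(\frac{Q-2\gamma}{4})/\Gamma^2(\frac{Q+2\gamma}{4})$. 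It is attained at $\omega_0$, the pullback of $U\equiv 1$, and it is precisely the expression displayed above.

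Two numerical checks show the printed constant is wrong in both directions:
\begin{itemize}
\item For $n=1$, $\gamma=\frac12$, the printed constant is about $0.155$ times the sharp one, so the inequality as printed is violated by $\omega_0$ itself.
\item For $n=1$, $\gamma=\frac32$, it is about $6.46$ times too large, so the inequality holds but has no optimizers.
\end{itemize}
So what your argument proves, and what should be stated, is the inequality with the corrected constant. This does not affect the rest of the paper, which only uses the existence of some $C_3(n,\gamma)$ in \eqref{eqn:u-LQstar-bound}.

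\textbf{A minor simplification in Step 3.} From $|u|^{q-2}u=cH(\bar\delta(a^{-1}\cdot))$ you can read off $u$ directly, because $H^{1/(q-1)}=\omega_0$. The Cayley identification of $\mathcal L_\gamma^{-1}H$ is therefore unnecessary.
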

	We next determine the exact normalization for which the standard extremal profile in (\ref{eqn:omega-define}) solves (\ref{main-problem}).
	\begin{lemma}\label{solution-sca-tran}
		Let $\gamma\in(0,\frac{Q}{2})$,
		\begin{equation}\label{eqn:C-kappa-define-relation}
			C(n,\gamma):=\bigl(\kappa(n,\gamma)\bigr)^{\frac{Q-2\gamma}{4\gamma}}
		\end{equation}
		and
		$$
		\kappa(n,\gamma):=2^{2\gamma}\left(\frac{\Gamma\left(\frac{Q+2\gamma}{4}\right)}{\Gamma\left(\frac{Q-2\gamma}{4}\right)}\right)^2.
		$$
		Then,
		\begin{equation}\label{eqn:omega-u-define}
			\omega(\eta):=C(n,\gamma)\omega_0(\eta)=C(n,\gamma)\omega_0(z,t)=\frac{C(n,\gamma)}{((1+|z|^2)^2 + t^2)^\frac{Q-2\gamma}{4}}
		\end{equation}
		is a positive solution to (\ref{main-problem}). Moreover, for any $\lambda>0$ and $\xi\in\mathbb H^n$, the function defined by
		\begin{equation}\label{eqn:w-lambda-xi-define}
			\omega_{\lambda, \xi}(\eta) := \lambda^{\frac{Q-2\gamma}{2}} \omega(\delta_\lambda(\tau_{\xi^{-1}}(\eta)))= \lambda^{\frac{Q-2\gamma}{2}} \omega(\delta_\lambda(\xi^{-1}\eta))
		\end{equation}
		is also a positive solution to (\ref{main-problem}).
	\end{lemma}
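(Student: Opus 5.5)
The plan is to pull the equation back to the sphere through the Cayley intertwining relation (\ref{eqn:relation-L-A}) and test it on the constant function $U\equiv 1\in\mathcal H_{0,0}$. By (\ref{eqn:A-gamma-action}),
$$
\mathcal A_\gamma 1=\lambda_0(\gamma)^2=\Big(\frac{\Gamma(\frac{Q+2\gamma}{4})}{\Gamma(\frac{Q-2\gamma}{4})}\Big)^2=:\Lambda .
$$
Applying (\ref{eqn:relation-L-A}) with $U\equiv1$ then gives
$$
\mathcal L_\gamma\Big((2|J_{\mathcal C}|)^{\frac{Q-2\gamma}{2Q}}\Big)=\Lambda\,(2|J_{\mathcal C}|)^{\frac{Q+2\gamma}{2Q}} .
$$

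Next I compute the factor explicitly. By (\ref{Jc-define}), $2|J_{\mathcal C}|=2^{Q}\big((1+|z|^2)^2+t^2\big)^{-(n+1)}$ with $n+1=Q/2$. Hence
$$
(2|J_{\mathcal C}|)^{\frac{Q-2\gamma}{2Q}}=2^{\frac{Q-2\gamma}{2}}\omega_0,\qquad (2|J_{\mathcal C}|)^{\frac{Q+2\gamma}{2Q}}=2^{\frac{Q+2\gamma}{2}}\omega_0^{\frac{Q+2\gamma}{Q-2\gamma}} .
$$
Dividing by $2^{\frac{Q-2\gamma}{2}}$ yields $\mathcal L_\gamma\omega_0=2^{2\gamma}\Lambda\,\omega_0^{\frac{Q+2\gamma}{Q-2\gamma}}=\kappa(n,\gamma)\,\omega_0^{\frac{Q+2\gamma}{Q-2\gamma}}$.

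For $\omega=C\omega_0$, linearity of $\mathcal L_\gamma$ gives $\mathcal L_\gamma\omega=C\kappa\,\omega_0^{p}$ with $p=\frac{Q+2\gamma}{Q-2\gamma}$, while $\omega^{p}=C^{p}\omega_0^{p}$. The equation therefore holds iff $C^{p-1}=\kappa$. Since $p-1=\frac{4\gamma}{Q-2\gamma}$, this means $C=\kappa^{\frac{Q-2\gamma}{4\gamma}}$, which is exactly (\ref{eqn:C-kappa-define-relation}). Positivity is obvious.

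For $\omega_{\lambda,\xi}$ I would use two invariances. First, $\mathcal L_\gamma$ commutes with left translations, since it is a function of the left-invariant operators $\mathcal L$ and $T$. Second, it is homogeneous under dilations: $\mathcal L_\gamma(u\circ\delta_\lambda)=\lambda^{2\gamma}(\mathcal L_\gamma u)\circ\delta_\lambda$, because $\mathcal L$ and $T$ are both homogeneous of degree $2$ under $\delta_\lambda$ and the Gamma quotient depends only on $\mathcal L/|T|$. Combining these,
$$
\mathcal L_\gamma\omega_{\lambda,\xi}=\lambda^{\frac{Q-2\gamma}{2}+2\gamma}\,\omega^{p}\big(\delta_\lambda(\xi^{-1}\eta)\big)=\lambda^{\frac{Q+2\gamma}{2}}\,\omega^{p}\big(\delta_\lambda(\xi^{-1}\eta)\big)=\omega_{\lambda,\xi}^{p},
$$
where the last step uses $\frac{Q-2\gamma}{2}\,p=\frac{Q+2\gamma}{2}$.

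The main obstacle is making (\ref{eqn:relation-L-A}) rigorous for $U\equiv1$, that is, checking that $\omega_0\in\mathcal D^\gamma(\mathbb H^n)$ (or at least that it is an admissible distributional solution). I would handle this by noting that $U\equiv1$ is smooth on the sphere and that the relation holds for all $U\in H^\gamma(\mathbb S^{2n+1})$. The same factor also makes the Cayley pullback an isometry of the quadratic forms up to constants, so $a_\gamma[\omega_0]<\infty$. Finally, $\omega_0\in L^{2Q/(Q-2\gamma)}$ by the Jacobian change of variables (\ref{eqn:int-F-FC}).
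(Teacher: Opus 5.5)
Your proposal is correct and follows the paper's proof essentially step for step. You test the Cayley intertwining relation on $U\equiv 1$, identify $(2|J_{\mathcal C}|)^{\frac{Q-2\gamma}{2Q}}=2^{\frac{Q-2\gamma}{2}}\omega_0$ and $(2|J_{\mathcal C}|)^{\frac{Q+2\gamma}{2Q}}=2^{\frac{Q+2\gamma}{2}}\omega_0^{\frac{Q+2\gamma}{Q-2\gamma}}$ to obtain $\mathcal L_\gamma\omega_0=\kappa(n,\gamma)\omega_0^{\frac{Q+2\gamma}{Q-2\gamma}}$, and then fix $C(n,\gamma)$ by requiring $C^{\frac{4\gamma}{Q-2\gamma}}=\kappa$. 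The only difference is that you verify the translation and dilation invariance directly (left-invariance of $\mathcal L$ and $T$, plus homogeneity of degree $2\gamma$), where the paper simply cites Chen--Wang.
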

	
	\begin{proof}
		Since the space $\mathcal H_{0,0}$ consists of constant functions on $\mathbb S^{2n+1}$, we have $1\in\mathcal H_{0,0}$. Moreover, it follows from (\ref{eqn:lambda-j-define}) and (\ref{eqn:A-gamma-action}) that
		$$
		\mathcal A_\gamma 1=\lambda_0(\gamma)^2=\left(\frac{\Gamma\left(\frac{Q+2\gamma}{4}\right)}{\Gamma\left(\frac{Q-2\gamma}{4}\right)}\right)^2=\lambda_{0,0}(\gamma).
		$$
		Taking $U\equiv1$ in (\ref{eqn:relation-L-A}), we have
		$$
		\mathcal L_\gamma\Big((2|J_{\mathcal C}|)^{\frac{Q-2\gamma}{2Q}}\Big)
		=\lambda_{0,0}(\gamma)
		(2|J_{\mathcal C}|)^{\frac{Q+2\gamma}{2Q}}.
		$$
		By using (\ref{Jc-define}) and (\ref{eqn:omega-define}), one has
		$$
		\begin{aligned}
			(2|J_{\mathcal C}|)^{\frac{Q-2\gamma}{2Q}}&=\left(\frac{2}
			{\left((1+|z|^2)^2+t^2\right)^\frac{1}{2}}\right)^\frac{Q-2\gamma}{2}\\
			&=2^\frac{Q-2\gamma}{2}\cdot\frac{1}{\left((1+|z|^2)^2+t^2\right)^\frac{Q-2\gamma}{4}}
			=2^{\frac{Q-2\gamma}{2}}\omega_0
		\end{aligned}
		$$
		and
		$$
		\begin{aligned}
			(2|J_{\mathcal C}|)^{\frac{Q+2\gamma}{2Q}}&=\left(\frac{2}
			{\left((1+|z|^2)^2+t^2\right)^\frac{1}{2}}\right)^\frac{Q+2\gamma}{2}\\
			&=2^\frac{Q+2\gamma}{2}\cdot\frac{1}{\left((1+|z|^2)^2+t^2\right)^\frac{Q+2\gamma}{4}}
			=2^{\frac{Q+2\gamma}{2}}\omega_0^\frac{Q+2\gamma}{Q-2\gamma}.
		\end{aligned}
		$$
		Therefore,
		$$
			2^{\frac{Q-2\gamma}{2}} \mathcal L_\gamma \omega_0=\mathcal L_\gamma\Big(2^{\frac{Q-2\gamma}{2}}\omega_0\Big)
			=\mathcal L_\gamma\Big((2|J_{\mathcal C}|)^{\frac{Q-2\gamma}{2Q}}\Big)
			=\lambda_{0,0}(\gamma)
			(2|J_{\mathcal C}|)^{\frac{Q+2\gamma}{2Q}}
			=\lambda_{0,0}(\gamma)2^{\frac{Q+2\gamma}{2}}\omega_0^\frac{Q+2\gamma}{Q-2\gamma},
		$$
		which yields
		$$
		\mathcal L_\gamma \omega_0=2^{2\gamma}\lambda_{0,0}(\gamma)
		\omega_0^{\frac{Q+2\gamma}{Q-2\gamma}}=2^{2\gamma}
		\left(\frac{\Gamma\left(\frac{Q+2\gamma}{4}\right)}{\Gamma\left(\frac{Q-2\gamma}{4}\right)}\right)^2\omega_0^{\frac{Q+2\gamma}{Q-2\gamma}}=\kappa(n,\gamma)\omega_0^{\frac{Q+2\gamma}{Q-2\gamma}}.
		$$
		Since $\omega=C(n,\gamma)\omega_0$, we have
		$$
		\mathcal L_\gamma\omega=C(n,\gamma)\mathcal L_\gamma\omega_0
		=C(n,\gamma)\kappa(n,\gamma)\omega_0^{\frac{Q+2\gamma}{Q-2\gamma}}.
		$$
		Combining this with the definition of $C(n,\gamma)$ in (\ref{eqn:C-kappa-define-relation}), we deduce that
		$$
		\begin{aligned}
			\mathcal L_\gamma\omega
			&=C(n,\gamma)\kappa(n,\gamma)\omega_0^{\frac{Q+2\gamma}{Q-2\gamma}}
			=C(n,\gamma)C(n,\gamma)^{\frac{4\gamma}{Q-2\gamma}}\omega_0^{\frac{Q+2\gamma}{Q-2\gamma}}\\
			&=C(n,\gamma)^{\frac{Q+2\gamma}{Q-2\gamma}}\omega_0^{\frac{Q+2\gamma}{Q-2\gamma}}
			=(C(n,\gamma)\omega_0)^{\frac{Q+2\gamma}{Q-2\gamma}}=\omega^{\frac{Q+2\gamma}{Q-2\gamma}}.\\
		\end{aligned}
		$$
		Thus, $\omega$ is a positive solution to (\ref{main-problem}). Furthermore, by the translation and dilation invariance of (\ref{main-problem}) (see \cite[Page 537]{Chen-Wang2017}), we conclude that $\omega_{\lambda, \xi}(\eta)$
		is also a positive solution to (\ref{main-problem}).
		
	\end{proof}

	\section{Non-degeneracy and Local Rigidity}\label{proof of main result}
	Given a function $g$ on $\mathbb{H}^n$, we define an associated function $\rho$ on $\mathbb{S}^{2n+1}$ by setting
	$$
	\rho(\zeta) := g(\mathcal{C}^{-1}(\zeta)) \Big(\frac{|J_{\mathcal{C}^{-1}}(\zeta)|}{2}\Big)^{\frac{Q-2\gamma}{2Q}} 
	$$
	for $\zeta \in \mathbb{S}^{2n+1} \setminus \{(0, \ldots, 0, -1)\}$. Equivalently, for $\eta=(z,t) \in \mathbb{H}^n$ with $\zeta=\mathcal C(\eta)$, the inverse relation reads as
	\begin{equation}\label{eqn:gu-define}
		g(\eta) = (2|J_{\mathcal{C}}(\eta)|)^{\frac{Q-2\gamma}{2Q}} (\rho \circ \mathcal{C})(\eta) = (2|J_{\mathcal{C}}(\eta)|)^{\frac{Q-2\gamma}{2Q}}\rho(\mathcal{C}(\eta)).
	\end{equation}
	Furthermore, a direct computation yields
	\begin{equation}\label{eqn:mathcalS-Hn-norm-eq}
		\begin{aligned}
			\|\rho\|^\frac{2Q}{Q-2\gamma}_{L^\frac{2Q}{Q-2\gamma}(\mathbb{S}^{2n+1})}
			&=\int_{\mathbb{S}^{2n+1}}|\rho(\zeta)|^\frac{2Q}{Q-2\gamma}d\zeta
			=\int_{\mathbb{H}^{n}}|\rho(\mathcal C(\eta))|^\frac{2Q}{Q-2\gamma}|J_\mathcal C(\eta)|d\eta\\
			&=\int_{\mathbb{H}^{n}}\left|g(\eta)\Big(\frac{1}{2|J_\mathcal C(\eta)|}\Big)^\frac{Q-2\gamma }{2Q} \right|^\frac{2Q}{Q-2\gamma}|J_\mathcal C(\eta)|d\eta\\
			&=\frac{1}{2}\int_{\mathbb{H}^{n}}|g(\eta)|^\frac{2Q }{Q-2\gamma}d\eta=\frac{1}{2}\|g\|^\frac{2Q}{Q-2\gamma}_{L^\frac{2Q}{Q-2\gamma}(\mathbb{H}^{n})}.
		\end{aligned}
	\end{equation}
	In the following, we recall the relationship between the fractional Sobolev spaces on the CR sphere and the Heisenberg group, which can be found in \cite[Proposition 3.1]{Kristaly2020}.
	\begin{proposition}\label{prop:Cayley-correspondence}
		Let $0<\gamma<\frac{Q}{2}$. Then $\rho\in H^\gamma(S^{2n+1})$ is a weak solution of
		$$
		A_\gamma\rho=|\rho|^\frac{4\gamma}{Q-2\gamma}\rho\ \ \ \text{on}\ \ \ \mathbb S^{2n+1}
		$$
		if and only if $g\in D^\gamma(\mathbb H^n)$ is a weak solution of
		$$
		\mathcal L_\gamma g=|g|^\frac{4\gamma}{Q-2\gamma}g\ \ \ \text{on}\ \ \ \mathbb H^n.
		$$
	\end{proposition}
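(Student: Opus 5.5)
The plan is to transfer both the function spaces and the weak formulation through the Cayley transform, using the pointwise intertwining relation (\ref{eqn:relation-L-A}) together with the change of variables formula (\ref{eqn:int-F-FC}). Given $\rho$ on $\mathbb S^{2n+1}$, define $g$ by (\ref{eqn:gu-define}). The first step is the norm identity $a_\gamma[g]=2\int_{\mathbb S^{2n+1}}\rho\,\mathcal A_\gamma\rho\,d\zeta$. More generally, for $\rho,\psi\in C^\infty(\mathbb S^{2n+1})$ with corresponding $g,h$, I will show
\[
\int_{\mathbb H^n} h\,\mathcal L_\gamma g\,d\eta=\int_{\mathbb H^n}(2|J_{\mathcal C}|)^{\frac{Q-2\gamma}{2Q}}(2|J_{\mathcal C}|)^{\frac{Q+2\gamma}{2Q}}(\psi\,\mathcal A_\gamma\rho)\circ\mathcal C\,d\eta
=2\int_{\mathbb S^{2n+1}}\psi\,\mathcal A_\gamma\rho\,d\zeta .
\]
This follows because the exponents add to $1$ and $|J_{\mathcal C}|\,d\eta=d\zeta$. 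Combining this with (\ref{eqn:mathcalS-Hn-norm-eq}), the map $\rho\mapsto g$ is, up to the factor $\sqrt2$, an isometry from $(H^\gamma(\mathbb S^{2n+1}),(\cdot,\cdot)_\gamma)$ onto $\mathcal D^\gamma(\mathbb H^n)$. It also maps $L^{2Q/(Q-2\gamma)}$ onto $L^{2Q/(Q-2\gamma)}$.

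The second step is extending the bijection from smooth functions to the full spaces, and this is the main obstacle. The image of $C^\infty(\mathbb S^{2n+1})$ consists of functions that are smooth on $\mathbb H^n$ with decay of order $|\eta|^{2\gamma-Q}$. I need this image to be dense in $\mathcal D^\gamma(\mathbb H^n)$, and conversely I need $C_c^\infty(\mathbb H^n)$, pulled back to the sphere, to lie in $H^\gamma$. I would first prove that $C_c^\infty(\mathbb H^n)$ is dense in $\mathcal D^\gamma(\mathbb H^n)$, which is its standard definition as a completion. Next I would observe that pulling back a compactly supported function gives a function supported away from the south pole, hence smooth on the sphere. Together with the isometry, this shows the correspondence extends by continuity to an isometric isomorphism between $H^\gamma(\mathbb S^{2n+1})$ and $\mathcal D^\gamma(\mathbb H^n)$. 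The inverse direction uses the fact that $C^\infty(\mathbb S^{2n+1})$ functions vanishing near the pole are dense in $H^\gamma$. For $\gamma<Q/2$ a point has zero $\gamma$-capacity in this homogeneous dimension, so a cutoff argument near the pole should work.

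The third step handles the nonlinearity. For the test functions $\psi\leftrightarrow h$, a pointwise computation gives
\[
|g|^{\frac{4\gamma}{Q-2\gamma}}g\,h=(2|J_{\mathcal C}|)^{\frac{Q-2\gamma}{2Q}\cdot\frac{Q+2\gamma}{Q-2\gamma}+\frac{Q-2\gamma}{2Q}}\bigl(|\rho|^{\frac{4\gamma}{Q-2\gamma}}\rho\,\psi\bigr)\circ\mathcal C .
\]
The exponent equals $1$, so $\int_{\mathbb H^n}|g|^{\frac{4\gamma}{Q-2\gamma}}gh\,d\eta=2\int_{\mathbb S^{2n+1}}|\rho|^{\frac{4\gamma}{Q-2\gamma}}\rho\psi\,d\zeta$. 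This integral is well defined by Hölder's inequality and the Sobolev inequality (\ref{eqn:sharp-Sobolev-CR}), respectively Proposition \ref{Chen-Wang2017-result}. It is continuous in $h$, so by the density from the second step it extends to all test functions.

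Finally, $\rho$ is a weak solution on the sphere if and only if $(\rho,\psi)_\gamma=\int|\rho|^{\frac{4\gamma}{Q-2\gamma}}\rho\psi$ for all $\psi\in H^\gamma$. Multiplying by $2$ and using the two identities above, this is equivalent to $(g,h)_{\mathcal D^\gamma}=\int|g|^{\frac{4\gamma}{Q-2\gamma}}gh$ for all $h\in\mathcal D^\gamma(\mathbb H^n)$, since $\psi\mapsto h$ is onto. The factors of $2$ cancel, which completes the equivalence.
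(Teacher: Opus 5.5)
Your proposal is correct and follows the standard route for this correspondence, which is also the computation the paper itself performs in Lemma~\ref{lem:4-1} and \eqref{eqn:energy-identity} (for the proposition itself the paper only cites Krist\'aly): by \eqref{eqn:relation-L-A} and $|J_{\mathcal C}|\,d\eta=d\zeta$, both $(g,h)_{\mathcal D^\gamma(\mathbb H^n)}$ and $\int_{\mathbb H^n}|g|^{\frac{4\gamma}{Q-2\gamma}}gh\,d\eta$ equal twice their spherical counterparts because the weight exponents add up to one, so the factors of $2$ cancel. Your Step 2 adds rigor without changing the route: the paper merely asserts that $f\mapsto\tilde\rho$ is an isomorphism of $\mathcal D^\gamma(\mathbb H^n)$ onto $H^\gamma(\mathbb S^{2n+1})$, and you correctly isolate the one nontrivial input, namely the density in $H^\gamma(\mathbb S^{2n+1})$ of smooth functions vanishing near the pole, which holds because a point has zero capacity when $2\gamma<Q$; when you write this out, note that you need $\|\rho\chi_\epsilon\|_{H^\gamma}\to0$ for smooth $\rho$, and this requires a multiplier or duality argument beyond $\|\chi_\epsilon\|_{H^\gamma}\to0$.
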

	
	\textbf{Proof of Theorem \ref{main-result-1}.} By virtue of \eqref{eqn:gu-define} and \eqref{eqn:relation-L-A}, we deduce that
	\begin{equation}\label{eqn:L-gamma-g-express}
		\mathcal{L}_\gamma g(\eta) = \mathcal{L}_\gamma\Big( (2|J_\mathcal{C}(\eta)|)^{\frac{Q-2\gamma}{2Q}} (\rho\circ \mathcal{C})(\eta) \Big) = (2|J_\mathcal{C}(\eta)|)^{\frac{Q+2\gamma}{2Q}} (\mathcal{A}_\gamma \rho)(\mathcal{C}(\eta)).
	\end{equation}
	As a consequence of \eqref{Jc-define}, \eqref{eqn:omega-define}, and \eqref{eqn:w-lambda-xi-define}, the Jacobian determinant $|J_{\mathcal{C}}(\eta)|$ admits the formulation
	\begin{equation}\label{eqn:J-C-omega-10-relation}
		\begin{aligned}
			|J_{\mathcal{C}}(\eta)|=	|J_{\mathcal{C}}(z,t)| &:= \frac{2^{2n+1}}{\left( (1 + |z|^2)^2 + t^2 \right)^{n+1}} 
			= \frac{2^{Q-1}}{\left( (1 + |z|^2)^2 + t^2 \right)^\frac{Q}{2}}\\
			&= 2^{Q-1}\left(\frac{1}{((1+|z|^2)^2 + t^2)^{\frac{Q-2\gamma}{4}}}\right)^ \frac{2Q}{Q-2\gamma}\\
			&= 2^{Q-1}(\omega_{0}(z,t))^{Q^*} = 2^{Q-1}\Big(\frac{\omega_{1,0}(z,t)}{C(n,\gamma)}\Big)^{Q^*},
		\end{aligned}
	\end{equation}
	where $\omega_{1,0}=C(n,\gamma)\omega_0$. Utilizing this identity alongside \eqref{eqn:linear-equation-f}, we obtain that
	$$
	\begin{aligned}
		\mathcal{L}_\gamma g(\eta) 
		&= (Q^*-1) \omega_{1,0}^\frac{4\gamma}{Q-2\gamma} g(\eta) 
		= (Q^*-1) \omega_{1,0}^\frac{4\gamma}{Q-2\gamma} \Big((2|J_\mathcal{C}(\eta)|)^{\frac{Q-2\gamma}{2Q}} \rho\circ\mathcal{C}\Big)\\
		&= (Q^*-1) \left(\frac{C(n,\gamma)^{Q^*}|J_\mathcal{C}(\eta)|}{2^{Q-1}}\right)^{\frac{1}{Q^*}\cdot\frac{4\gamma}{Q-2\gamma}} \Big((2|J_\mathcal{C}(\eta)|)^{\frac{Q-2\gamma}{2Q}} \rho\circ\mathcal{C}\Big)\\
		&= (Q^*-1)C(n,\gamma)^\frac{4\gamma}{Q-2\gamma}|J_\mathcal{C}(\eta)|^\frac{2\gamma}{Q}2^\frac{2\gamma(1-Q)}{Q}(2|J_\mathcal{C}(\eta)|)^\frac{Q-2\gamma}{2Q}\rho\circ\mathcal{C}\\
		&= (Q^*-1)C(n,\gamma)^{Q^*-2}|J_\mathcal{C}(\eta)|^\frac{2\gamma}{Q}2^{\frac{2\gamma}{Q}-2\gamma}(2|J_\mathcal{C}(\eta)|)^\frac{Q-2\gamma}{2Q}\rho\circ\mathcal{C}\\
		&= \frac{(Q^*-1)C(n,\gamma)^{Q^*-2}}{2^{2\gamma}}(2|J_{\mathcal{C}}(\eta)|)^\frac{Q+2\gamma}{2Q}\rho\circ\mathcal{C}\\
		&= (Q^*-1)\left(\frac{\Gamma(\frac{Q+2\gamma}{4})}{\Gamma(\frac{Q-2\gamma}{4})}\right)^2(2|J_{\mathcal{C}}(\eta)|)^\frac{Q+2\gamma}{2Q}\rho\circ\mathcal{C}.
	\end{aligned}
	$$
	In light of \eqref{eqn:L-gamma-g-express}, we conclude that the function $g\in \mathcal D^\gamma(\mathbb H^n)$ is a weak solution of \eqref{eqn:linear-equation-f} if and only if the corresponding profile $\rho$ satisfies the eigenvalue problem
	$$
	\mathcal{A}_\gamma \rho = (Q^*-1)\left(\frac{\Gamma(\frac{Q+2\gamma}{4})}{\Gamma(\frac{Q-2\gamma}{4})}\right)^2\rho = \lambda_{1,0}\rho = \lambda_{0,1}\rho \quad \mbox{on} \quad \mathbb S^{2n+1}.
	$$
	Here, the exact values of the eigenvalues emerge from the fundamental properties of the Gamma function
	$$
	\begin{aligned}
		\lambda_{1,0}(\gamma) &=
		\frac{\Gamma\left(\frac{Q+2\gamma}{4} + 1\right)\Gamma\left(\frac{Q+2\gamma}{4} \right)}{\Gamma\left(\frac{Q-2\gamma}{4} + 1\right)\Gamma\left(\frac{Q-2\gamma}{4} \right)} 
		= \frac{\frac{Q+2\gamma}{4}\Gamma\left(\frac{Q+2\gamma}{4} \right)\Gamma\left(\frac{Q+2\gamma}{4} \right)}{\frac{Q-2\gamma}{4}\Gamma\left(\frac{Q-2\gamma}{4} \right)\Gamma\left(\frac{Q-2\gamma}{4} \right)} \\
		&= (Q^*-1)\left(\frac{\Gamma(\frac{Q+2\gamma}{4})}{\Gamma(\frac{Q-2\gamma}{4})}\right)^2 
		= \lambda_{0,1}(\gamma).
	\end{aligned} 
	$$
	Moreover, the eigenspace associated with the eigenvalues $\lambda_{1,0}(\gamma)$ and $\lambda_{0,1}(\gamma)$ is generated by the $(n+1)$-dimensional complex coordinate functions $\zeta_j$ and their conjugates $\bar{\zeta}_j$ for $j=1,\dots,n+1$, explicitly given by
	$$
	\zeta_j = \frac{2z_j}{1 + |z|^2 - it} \quad (j=1,\dots,n),\quad \zeta_{n+1} = \frac{1 - |z|^2 + it}{1 + |z|^2 - it}
	$$
	and
	$$
	\bar{\zeta}_j = \frac{2\bar{z}_j}{1 + |z|^2 +it} \quad (j=1,\dots,n),\quad \bar{\zeta}_{n+1} = \frac{1 - |z|^2 - it}{1 + |z|^2 + it},
	$$
	where $\eta=(z,t)\in\mathbb H^n$, $z_j=x_j+i y_j$ and $\zeta=\mathcal C(\eta)=(\zeta_1\dots,\zeta_{n+1})\in\mathbb S^{2n+1}$. Drawing upon the spectral analysis of intertwining operators conducted by Branson et al. \cite[Appendix A]{Branson-Fontana-Morpurgo2013}, these eigenvalues correspond to the first non-trivial space $\mathcal{H}_{1,0} \oplus \mathcal{H}_{0,1}$ in the spherical harmonic decomposition, which possesses a total real dimension of $2n+2$. Furthermore, $\mathcal{H}_{1,0} \oplus \mathcal{H}_{0,1}$ is spanned by $\operatorname{Re}(\zeta_j)$ and $\operatorname{Im}(\zeta_j)$ for $j=1,\dots,n+1$. 
	We now identify the corresponding kernel functions by considering separately the two conjugate components $\mathcal H_{1,0}$ and $\mathcal H_{0,1}$. 
	
	\textbf{$(i)$ The space $\mathcal H_{1,0}$.}  This space is generated by the coordinate functions
	$\zeta_1,\ldots,\zeta_{n+1}$ on $\mathbb S^{2n+1}$. 
	For notational convenience, we introduce 
	$$
	\varrho(\eta)=\varrho(z,t) := (1+|z|^2)^2 + t^2.
	$$ 
	A straightforward computation then yields the following partial derivatives
	$$
	\frac{\partial \varrho}{\partial x_j}=4x_j(1+|z|^2), 
	\quad
	\frac{\partial \varrho}{\partial y_j}=4y_j(1+|z|^2),
	\quad
	\frac{\partial \varrho}{\partial t}=2t.
	$$
	It follows from \eqref{eqn:w-lambda-xi-define} that the standard positive bubble $\omega_{1,0}$ is 
	\begin{equation}\label{eqn:omega-10-varrho-rel}
		\omega_{1,0}(\eta)=\omega_{1,0}(z,t) =\frac{C(n,\gamma)}{((1+|z|^2)^2 + t^2)^\frac{Q-2\gamma}{4}}= C(n,\gamma) \varrho^{-\frac{Q-2\gamma}{4}}.
	\end{equation}
	Differentiating $\omega_{1,0}$ with respect to the variables $x_j, y_j$ and $t$ gives
	$$
	\begin{aligned}
		\frac{\partial \omega_{1,0}}{\partial x_j}&=C(n,\gamma)\frac{\partial \varrho^{-\frac{Q-2\gamma}{4}}}{\partial x_j}
		=-\frac{Q-2\gamma}{4}C(n,\gamma)\varrho^{-\frac{Q-2\gamma}{4}-1}\cdot4x_j(1+|z|^2)\\
		&=-(Q-2\gamma)C(n,\gamma)\varrho^{-\frac{Q-2\gamma}{4}-1}x_j(1+|z|^2),
	\end{aligned}
	$$
	$$
	\begin{aligned}
		\frac{\partial \omega_{1,0}}{\partial y_j}&=C(n,\gamma)\frac{\partial \varrho^{-\frac{Q-2\gamma}{4}}}{\partial y_j}
		=-\frac{Q-2\gamma}{4}C(n,\gamma)\varrho^{-\frac{Q-2\gamma}{4}-1}\cdot4y_j(1+|z|^2)\\
		&=-(Q-2\gamma)C(n,\gamma)\varrho^{-\frac{Q-2\gamma}{4}-1}y_j(1+|z|^2),
	\end{aligned}
	$$
	and
	\begin{equation}\label{eqn:partial-omega-10-t}
		\begin{aligned}
			\frac{\partial  \omega_{1,0}}{\partial t}&=C(n,\gamma)\frac{\partial \varrho^{-\frac{Q-2\gamma}{4}}}{\partial t}
			=-\frac{Q-2\gamma}{4}C(n,\gamma)\varrho^{-\frac{Q-2\gamma}{4}-1}\cdot 2t\\
			&=-\frac{Q-2\gamma}{2}C(n,\gamma)\varrho^{-\frac{Q-2\gamma}{4}-1}t.
		\end{aligned}
	\end{equation}
	Evaluating the vector fields 
	$$
	X^R_j \omega_{\lambda,\xi}:=(\partial_{x_j} -2y_j \partial_t)\omega_{\lambda,\xi}
	$$ 
	and 
	$$
	Y^R_j \omega_{\lambda,\xi}:=(\partial_{y_j} + 2x_j \partial_t)\omega_{\lambda,\xi}
	$$ 
	at the point $(\lambda, \xi) = (1,0)$, we obtain
	$$
	\begin{aligned}
		X^R_j \omega_{\lambda,\xi}\big|_{(1,0)}&= \Big(\frac{\partial \omega_{\lambda,\xi}}{\partial x_j} - 2y_j \frac{\partial  \omega_{\lambda,\xi}}{\partial t}\Big)\bigg|_{(1,0)}
		=\frac{\partial \omega_{1,0}}{\partial x_j} -2y_j \frac{\partial  \omega_{1,0}}{\partial t}\\
		&=-(Q-2\gamma)C(n,\gamma)\varrho^{-\frac{Q-2\gamma}{4}-1}x_j(1+|z|^2)+2y_j\cdot\frac{Q-2\gamma}{2}C(n,\gamma)\varrho^{-\frac{Q-2\gamma}{4}-1}t  \\
		&=-(Q-2\gamma)C(n,\gamma)\varrho^{-\frac{Q-2\gamma}{4}-1}\Big(x_j(1+|z|^2)-y_j t\Big)
	\end{aligned}
	$$
	and
	$$
	\begin{aligned}
		Y^R_j \omega_{\lambda,\xi}\big|_{(1,0)}&= \Big(\frac{\partial \omega_{\lambda,\xi}}{\partial y_j} +2x_j \frac{\partial  \omega_{\lambda,\xi}}{\partial t}\Big)\bigg|_{(1,0)}
		=\frac{\partial \omega_{1,0}}{\partial y_j} +2x_j \frac{\partial  \omega_{1,0}}{\partial t}\\
		&=-(Q-2\gamma)C(n,\gamma)\varrho^{-\frac{Q-2\gamma}{4}-1}y_j(1+|z|^2)-2x_j\cdot\frac{Q-2\gamma}{2}C(n,\gamma)\varrho^{-\frac{Q-2\gamma}{4}-1}t  \\
		&=-(Q-2\gamma)C(n,\gamma)\varrho^{-\frac{Q-2\gamma}{4}-1}\Big(y_j(1+|z|^2)+x_j t\Big).
	\end{aligned}
	$$
	To bridge these expressions with the Cayley transform $\mathcal{C}: \mathbb{H}^n \to \mathbb{S}^{2n+1}$, we observe that for $j=1, \dots, n$, the components of the transform are given by $\zeta_j = \frac{2z_j}{1+|z|^2-it}$. Separating this into real and imaginary parts yields
	$$
	\begin{aligned}
		\zeta_j = \frac{2z_j}{1+|z|^2-it} &= \frac{2(x_j+iy_j)}{(1+|z|^2) - it}= \frac{2(x_j+iy_j) \big( (1+|z|^2) + it \big)}{(1+|z|^2)^2 + t^2} \\
		&= \frac{2 \big(x_j(1+|z|^2) - y_j t \big) + 2i \big( y_j(1+|z|^2) + x_j t \big)}{\varrho},
	\end{aligned}
	$$
	which implies
	\begin{equation}\label{eqn:express-Re-zeta}
		\operatorname{Re}(\zeta_j) = \frac{2 \big( x_j(1+|z|^2) - y_j t \big)}{\varrho} 
	\end{equation}
	and
	\begin{equation}\label{eqn:express-Im-zeta}
		\operatorname{Im}(\zeta_j)= \frac{2 \big( y_j(1+|z|^2) +x_j t \big)}{\varrho}.
	\end{equation}
	Invoking the relation between $\omega_{1,0}$ and $|J_{\mathcal{C}}| $ presented in \eqref{eqn:J-C-omega-10-relation}:
	$$
	|J_{\mathcal{C}}| =2^{Q-1}\Big(\frac{\omega_{1,0}}{C(n,\gamma)}\Big)^{Q^*}=2^{Q-1}\left(\frac{C(n,\gamma)\varrho^{-\frac{Q-2\gamma}{4}}}{C(n,\gamma)}\right)^{Q^*}=2^{Q-1}\varrho^{-\frac{Q}{2}},
	$$
	we deduce that
	\begin{equation}\label{eqn:2Jc-varrho-relation}
		(2|J_{\mathcal{C}}|)^{\frac{Q-2\gamma}{2Q}} = \left( 2^Q \varrho^{-\frac{Q}{2}} \right)^{\frac{Q-2\gamma}{2Q}} = 2^{\frac{Q-2\gamma}{2}} \varrho^{-\frac{Q-2\gamma}{4}}.
	\end{equation}
	Combining this with \eqref{eqn:express-Re-zeta}-\eqref{eqn:express-Im-zeta}, we arrive at
	$$
	\begin{aligned}
		(2|J_{\mathcal{C}}|)^{\frac{Q-2\gamma}{2Q}}\operatorname{Re}(\zeta_j)  &= \left( 2 \cdot 2^{Q-1}\varrho^{-\frac{Q}{2}} \right)^{\frac{Q-2\gamma}{2Q}}\cdot\frac{2 \big( x_j(1+|z|^2) - y_j t \big)}{\varrho} \\
		& = \left( 2^{Q}\varrho^{-\frac{Q}{2}} \right)^{\frac{Q-2\gamma}{2Q}}2 \big( x_j(1+|z|^2) -y_j t \big)\varrho^{-1}\\
		& = 2^{\frac{Q-2\gamma+2}{2}}{\varrho^{-\frac{Q-2\gamma+4}{4}}}\big( x_j(1+|z|^2) -y_j t \big)
	\end{aligned}
	$$
	and
	$$
	\begin{aligned}
		(2|J_{\mathcal{C}}|)^{\frac{Q-2\gamma}{2Q}}\operatorname{Im}(\zeta_j)  &= \left( 2 \cdot 2^{Q-1}\varrho^{-\frac{Q}{2}} \right)^{\frac{Q-2\gamma}{2Q}}\cdot\frac{2 \big( y_j(1+|z|^2) +x_j t \big)}{\varrho} \\
		& = \left( 2^{Q}\varrho^{-\frac{Q}{2}} \right)^{\frac{Q-2\gamma}{2Q}}2 \big( y_j(1+|z|^2) +x_j t \big)\varrho^{-1}\\
		& = 2^{\frac{Q-2\gamma+2}{2}}{\varrho^{-\frac{Q-2\gamma+4}{4}}}\big( y_j(1+|z|^2) + x_j t \big).
	\end{aligned}
	$$
	As a result, we establish the following identifications:
	$$
	\begin{aligned}
		X^R_j \omega_{\lambda,\xi}\big|_{(1,0)}&=-(Q-2\gamma)C(n,\gamma)\varrho^{-\frac{Q-2\gamma+4}{4}}\Big(x_j(1+|z|^2)-y_j t\Big)\\
		&=\widetilde{C}(n,\gamma)(2|J_{\mathcal{C}}|)^{\frac{Q-2\gamma}{2Q}}\operatorname{Re}(\zeta_j)
	\end{aligned}
	$$
	and
	$$
	\begin{aligned}
		Y^R_j \omega_{\lambda,\xi}\big|_{(1,0)}&=-(Q-2\gamma)C(n,\gamma)\varrho^{-\frac{Q-2\gamma+4}{4}}\Big(y_j(1+|z|^2)+x_j t\Big)\\
		&=\widetilde{C}(n,\gamma)(2|J_{\mathcal{C}}|)^{\frac{Q-2\gamma}{2Q}}\operatorname{Im}(\zeta_j),
	\end{aligned}
	$$
	where 
	\begin{equation}\label{eqn:widetilde-C-define}
		\widetilde{C}(n,\gamma):=-\frac{(Q-2\gamma)C(n,\gamma)}{2^{\frac{Q-2\gamma+2}{2}}}.
	\end{equation}
	This verifies that $X^R_j \omega_{\lambda,\xi}\big|_{(1,0)}$ and $Y^R_j \omega_{\lambda,\xi}\big|_{(1,0)}$ indeed solve the linearized equation \eqref{eqn:linear-equation-f}.
	
	For $j=n+1$, the corresponding coordinate is $\zeta_{n+1} =  \frac{1 - |z|^2 + it}{1 + |z|^2 - it}$. Carrying out the analogous expansion, we have
	$$
	\begin{aligned}
		\zeta_{n+1} &= \frac{(1-|z|^2+it)(1+|z|^2+it)}{(1+|z|^2)^2 + t^2} \\
		&= \frac{(1-|z|^2)(1+|z|^2) - t^2 + i(t(1+|z|^2) + t(1-|z|^2))}{\varrho}\\
		& = \frac{1-|z|^4-t^2 + 2it}{\varrho},
	\end{aligned}
	$$
	which means that
	$$
	\operatorname{Re}(\zeta_{n+1}) = \frac{1-|z|^4-t^2}{\varrho}
	$$ 
	and 
	$$
	\operatorname{Im}(\zeta_{n+1}) = \frac{2t}{\varrho}. 
	$$
	Together with \eqref{eqn:2Jc-varrho-relation}, this yields
	\begin{equation}\label{eqn:n+1-Re-expression}
		(2|J_{\mathcal{C}}|)^{\frac{Q-2\gamma}{2Q}} \operatorname{Re}(\zeta_{n+1}) = 2^{\frac{Q-2\gamma}{2}} \varrho^{-\frac{Q-2\gamma+4}{4}} (1 - |z|^4 - t^2)
	\end{equation}
	and 
	$$
	(2|J_{\mathcal{C}}|)^{\frac{Q-2\gamma}{2Q}} \operatorname{Im}(\zeta_{n+1}) = 2^{\frac{Q-2\gamma}{2}} \varrho^{-\frac{Q-2\gamma+4}{4}} (2t)= 2^{\frac{Q-2\gamma+2}{2}} \varrho^{-\frac{Q-2\gamma+4}{4}} t.
	$$
	Taking \eqref{eqn:partial-omega-10-t} into account, one has
	\begin{equation}\label{eqn:partial-omega-lambda-Im-n+1}
		\begin{aligned}
			\frac{\partial \omega_{\lambda,\xi}}{\partial t} \bigg|_{(1,0)}=\frac{\partial \omega_{1,0}}{\partial t}&=-\frac{Q-2\gamma}{2}C(n,\gamma)\varrho^{-\frac{Q-2\gamma+4}{4}}t\\
			&= -\frac{1}{2} \frac{(Q-2\gamma)C(n,\gamma)}{2^{\frac{Q-2\gamma+2}{2}}}(2|J_{\mathcal{C}}|)^{\frac{Q-2\gamma}{2Q}} \operatorname{Im}(\zeta_{n+1})\\
			&=\frac{1}{2}\widetilde{C}(n,\gamma)(2|J_{\mathcal{C}}|)^{\frac{Q-2\gamma}{2Q}} \operatorname{Im}(\zeta_{n+1}),
		\end{aligned}
	\end{equation}
	where $\widetilde{C}(n,\gamma)$ is the constant introduced in \eqref{eqn:widetilde-C-define}. Finally, we examine the variation with respect to the dilation parameter $\lambda$. Let 
	$$
	\omega_{\lambda, 0}(z, t) := C(n, \gamma) \lambda^{\frac{Q-2\gamma}{2}} \varrho_{\lambda}^{-\frac{Q-2\gamma}{4}}
\ \ 
	\mbox{with} 
	\ \ 
	\varrho_{\lambda} := (1 + \lambda^2 |z|^2)^2 + \lambda^4 t^2.
	$$
	Taking the derivative at $\lambda=1$ yields
	\begin{equation}\label{eqn:partial-omega-10-lambda}
		\begin{aligned}
			\frac{\partial \omega_{\lambda, 0}}{\partial \lambda} \bigg|_{\lambda=1} &= C(n, \gamma) \left[ \frac{Q-2\gamma}{2} \lambda^{\frac{Q-2\gamma}{2}-1} \varrho_{\lambda}^{-\frac{Q-2\gamma}{4}} - \frac{Q-2\gamma}{4} \lambda^{\frac{Q-2\gamma}{2}} \varrho_{\lambda}^{-\frac{Q-2\gamma}{4}-1} \frac{\partial \varrho_{\lambda}}{\partial \lambda} \right] \bigg|_{\lambda=1} \\
			&= \frac{Q-2\gamma}{4} C(n, \gamma) \varrho^{-\frac{Q-2\gamma+4}{4}} \left[ 2\varrho - \frac{\partial \varrho_{\lambda}}{\partial \lambda} \bigg|_{\lambda=1} \right].
		\end{aligned}
	\end{equation}
	Since 
	$$
	\frac{\partial \varrho_{\lambda}}{\partial \lambda} \bigg|_{\lambda=1} =\Big[2(1+\lambda^2|z|^2)\cdot(2\lambda |z|^2)+4\lambda^3 t^2\Big]\bigg|_{\lambda=1}= 4|z|^2(1+|z|^2) + 4t^2,
	$$
	the term $\Big[2\varrho - \frac{\partial \varrho_{\lambda}}{\partial \lambda} \big|_{\lambda=1}\Big] $ in \eqref{eqn:partial-omega-10-lambda} reduces to
	$$
	\begin{aligned}
		2\varrho - (4|z|^2 + 4|z|^4 + 4t^2) &=2(1+|z|^2)^2 +2 t^2- (4|z|^2 + 4|z|^4 + 4t^2) \\
		&= 2 + 4|z|^2 +2 |z|^4 +2 t^2 - (4|z|^2 + 4|z|^4 + 4t^2) \\
		&= 2 - 2|z|^4 - 2t^2=2(1 - |z|^4 - t^2).
	\end{aligned}
	$$
	Combining this with \eqref{eqn:n+1-Re-expression} and \eqref{eqn:partial-omega-10-lambda}, we obtain that
	\begin{equation}\label{eqn:partial-omega-lambda-Re-n+1}
		\begin{aligned}
			\frac{\partial \omega_{\lambda, \xi}}{\partial \lambda} \bigg|_{(1,0)} &= \frac{Q-2\gamma}{4} C(n, \gamma) \varrho^{-\frac{Q-2\gamma+4}{4}} \cdot 2(1 - |z|^4 - t^2)\\
			&=  \frac{Q-2\gamma}{2} C(n, \gamma) \varrho^{-\frac{Q-2\gamma+4}{4}}(1 - |z|^4 - t^2) \\
			& = \frac{(Q-2\gamma)C(n, \gamma)}{2^{\frac{Q-2\gamma+2}{2}}} (2|J_{\mathcal{C}}|)^{\frac{Q-2\gamma}{2Q}} \operatorname{Re}(\zeta_{n+1}) \\
			&=-\widetilde{C}(n,\gamma) (2|J_{\mathcal{C}}|)^{\frac{Q-2\gamma}{2Q}} \operatorname{Re}(\zeta_{n+1}), \\
		\end{aligned}
	\end{equation}
	where $\widetilde{C}(n,\gamma)$ is defined in (\ref{eqn:widetilde-C-define}).
	\eqref{eqn:partial-omega-lambda-Im-n+1} and \eqref{eqn:partial-omega-lambda-Re-n+1} show that $\frac{\partial \omega_{\lambda, \xi}}{\partial t} \big|_{(1,0)}$ and $\frac{\partial \omega_{\lambda, \xi}}{\partial \lambda} \big|_{(1,0)}$ agree with the corresponding eigenspace basis up to nonzero constants and therefore solve  \eqref{eqn:linear-equation-f}.

	\textbf{$(ii)$ The space $\mathcal H_{0,1}$.} This space is the complex conjugate of $\mathcal H_{1,0}$ and is generated by $\bar{\zeta}_1,\ldots,\bar{\zeta}_{n+1}$. 
	For $j=1, \dots, n$, the components of the conjugated transform take the form $\bar{\zeta}_j = \frac{2\bar{z}_j}{1 + |z|^2 +it} $. Expanding into real and imaginary parts, we find that
	$$
	\begin{aligned}
		\bar{\zeta}_j = \frac{2\bar{z}_j}{1+|z|^2+it} &= \frac{2(x_j-iy_j)}{(1+|z|^2) + it}
		= \frac{2(x_j-iy_j) \big( (1+|z|^2) -it \big)}{(1+|z|^2)^2 + t^2} \\
		&= \frac{2 \big(x_j(1+|z|^2) - y_j t \big) - 2i \big( y_j(1+|z|^2) +x_j t \big)}{\varrho},\\
	\end{aligned}
	$$
	which means
	\begin{equation}\label{eqn:express-Re-bar-zeta}
		\operatorname{Re}(\bar{\zeta}_j) = \frac{2 \big( x_j(1+|z|^2) - y_j t \big)}{\varrho} 
	\end{equation}
	and
	\begin{equation}\label{eqn:express-Im-bar-zeta}
		\operatorname{Im}(\bar{\zeta}_j)= -\frac{2 \big( y_j(1+|z|^2) + x_j t \big)}{\varrho}.
	\end{equation}
	Combining this with (\ref{eqn:2Jc-varrho-relation}), we arrive at
	$$
	\begin{aligned}
		(2|J_{\mathcal{C}}|)^{\frac{Q-2\gamma}{2Q}}\text{Re}(\bar{\zeta}_j)  &= \left( 2^{Q}\varrho^{-\frac{Q}{2}} \right)^{\frac{Q-2\gamma}{2Q}}2 \big( x_j(1+|z|^2) - y_j t \big)\varrho^{-1}\\
		& = 2^{\frac{Q-2\gamma+2}{2}}{\varrho^{-\frac{Q-2\gamma+4}{4}}}\big( x_j(1+|z|^2) - y_j t \big)
	\end{aligned}
	$$
	and
	$$
	\begin{aligned}
		(2|J_{\mathcal{C}}|)^{\frac{Q-2\gamma}{2Q}}\text{Im}(\bar{\zeta}_j)  &= \left( 2^{Q}\varrho^{-\frac{Q}{2}} \right)^{\frac{Q-2\gamma}{2Q}}(-2) \big( y_j(1+|z|^2) +x_j t \big)\varrho^{-1}\\
		& = -2^{\frac{Q-2\gamma+2}{2}}{\varrho^{-\frac{Q-2\gamma+4}{4}}}\big( y_j(1+|z|^2) +x_j t \big).
	\end{aligned}
	$$
	Recalling the expressions for $X^R_j \omega_{\lambda,\xi}\big|_{(1,0)}$ and $Y^R_j \omega_{\lambda,\xi}\big|_{(1,0)}$, a direct computation yields
	$$
	\begin{aligned}
		X^R_j \omega_{\lambda,\xi}\big|_{(1,0)}&=-(Q-2\gamma)C(n,\gamma)\varrho^{-\frac{Q-2\gamma}{4}-1}\Big(x_j(1+|z|^2)-y_j t\Big)\\
		&=\widetilde{C}(n,\gamma)(2|J_{\mathcal{C}}|)^{\frac{Q-2\gamma}{2Q}}\text{Re}(\bar{\zeta}_j)
	\end{aligned}
	$$
	and
	$$
	\begin{aligned}
		Y^R_j \omega_{\lambda,\xi}\big|_{(1,0)}&=-(Q-2\gamma)C(n,\gamma)\varrho^{-\frac{Q-2\gamma}{4}-1}\Big(y_j(1+|z|^2)+x_j t\Big)\\
		&=-\widetilde{C}(n,\gamma)(2|J_{\mathcal{C}}|)^{\frac{Q-2\gamma}{2Q}}\text{Im}(\bar{\zeta}_j),
	\end{aligned}
	$$
	where $\widetilde{C}(n,\gamma)$  is given in (\ref{eqn:widetilde-C-define}). 
	Consequently, both $X^R_j \omega_{\lambda,\xi}\big|_{(1,0)}$ and $Y^R_j \omega_{\lambda,\xi}\big|_{(1,0)}$ satisfy  (\ref{eqn:linear-equation-f}).
	
	For $j=n+1$, the conjugated coordinate is $\bar{\zeta}_{n+1} = \frac{1 - |z|^2 -it}{1 + |z|^2 +it}$.  A direct expansion states
	$$
	\begin{aligned}
		\bar{\zeta}_{n+1} = \frac{1-|z|^2-it}{1+|z|^2+it} &= \frac{(1-|z|^2-it)(1+|z|^2-it)}{(1+|z|^2)^2 + t^2}\\
		&= \frac{(1-|z|^2)(1+|z|^2) - t^2 -i(t(1+|z|^2) + t(1-|z|^2))}{\varrho}\\
		& = \frac{1-|z|^4-t^2 - 2it}{\varrho}.
	\end{aligned}
	$$
	As a consequence,
	$$
	\text{Re}(\bar{\zeta}_{n+1}) = \frac{1-|z|^4-t^2}{\varrho}
\ \ 
	\mbox{and}\ \  
	\text{Im}(\bar{\zeta}_{n+1}) =- \frac{2t}{\varrho}. 
	$$
	Together this with (\ref{eqn:2Jc-varrho-relation}), we deduce that 
	\begin{equation}\label{eqn:n+1-Re-expression-bar}
		(2|J_{\mathcal{C}}|)^{\frac{Q-2\gamma}{2Q}} \text{Re}(\bar{\zeta}_{n+1}) = 2^{\frac{Q-2\gamma}{2}} \varrho^{-\frac{Q-2\gamma+4}{4}} (1 - |z|^4 - t^2)
	\end{equation}
	and 
	\begin{equation}\label{eqn:n+1-Im-expression-bar}
		(2|J_{\mathcal{C}}|)^{\frac{Q-2\gamma}{2Q}} \text{Im}(\bar{\zeta}_{n+1}) = 2^{\frac{Q-2\gamma}{2}} \varrho^{-\frac{Q-2\gamma+4}{4}} (-2t)= -2^{\frac{Q-2\gamma+2}{2}} \varrho^{-\frac{Q-2\gamma+4}{4}} t.
	\end{equation}
	Inserting (\ref{eqn:n+1-Im-expression-bar}) into the expression for the $t$ derivative from (\ref{eqn:partial-omega-10-t}) leads to
	$$
	\begin{aligned}
		\frac{\partial \omega_{\lambda,\xi}}{\partial t} \bigg|_{(1,0)}&=-\frac{Q-2\gamma}{2}C(n,\gamma)\varrho^{-\frac{Q-2\gamma+4}{4}}t\\
		&= \frac{1}{2}\frac{(Q-2\gamma)C(n,\gamma)}{2^{\frac{Q-2\gamma+2}{2}}}(2|J_{\mathcal{C}}|)^{\frac{Q-2\gamma}{2Q}} \operatorname{Im}(\bar{\zeta}_{n+1})\\
		&= -\frac{1}{2}\widetilde{C}(n,\gamma)(2|J_{\mathcal{C}}|)^{\frac{Q-2\gamma}{2Q}} \operatorname{Im}(\bar{\zeta}_{n+1}).
	\end{aligned}
	$$
	Similarly, using the variation with respect to the scaling parameter $\lambda$ derived in (\ref{eqn:partial-omega-lambda-Re-n+1}), we directly substitute (\ref{eqn:n+1-Re-expression-bar}) to obtain
	$$
	\begin{aligned}
		\frac{\partial \omega_{\lambda, \xi}}{\partial \lambda} \bigg|_{(1,0)} &= \frac{Q-2\gamma}{2} C(n, \gamma) \varrho^{-\frac{Q-2\gamma+4}{4}}(1 - |z|^4 - t^2) \\
		& = \frac{(Q-2\gamma)C(n, \gamma)}{2^{\frac{Q-2\gamma+2}{2}}} (2|J_{\mathcal{C}}|)^{\frac{Q-2\gamma}{2Q}} \text{Re}(\bar{\zeta}_{n+1}) \\
		&=-\widetilde{C}(n,\gamma) (2|J_{\mathcal{C}}|)^{\frac{Q-2\gamma}{2Q}} \text{Re}(\bar{\zeta}_{n+1}).
	\end{aligned}
	$$
	These calculations show that $\frac{\partial \omega_{\lambda, \xi}}{\partial t} \big|_{(1,0)} $ and $\frac{\partial \omega_{\lambda, \xi}}{\partial \lambda} \big|_{(1,0)}$ are the solutions to (\ref{eqn:linear-equation-f}) associated with $\bar{\zeta}_{n+1}$.

	From $(i)$ and $(ii)$, we see that the $2n+2$ partial derivatives of $\omega_{\lambda, \xi}$ at $(\lambda, \xi) = (1,0)$ span the eigenspace for $\lambda_{1,0}(\gamma)$. This means the kernel of the linearized operator is generated by these derivatives. Therefore, the standard positive solution $\omega_{1,0}$ is non-degenerate. This completes the proof of Theorem \ref{main-result-1}. 
	\\
	\qed

	Motivated by the local-branch uniqueness argument of Frank and
	Lenzmann \cite{Frank-Lenzmann2013}, we establish the local rigidity of the bubble manifold $\mathcal Z$ for (\ref{main-problem}). As shown in Lemma \ref{solution-sca-tran}, the family of standard positive bubbles is 
	$$
	\mathcal{Z} =\Big \{\omega_{\lambda,\xi} : \lambda > 0,\  \xi \in \mathbb{H}^n\Big\},
	$$
	and it is a $(2n + 2)$-dimensional manifold (see \cite[Page 24]{Chtioui-Hajaiej-Yacoub2021}). Based on the kernel computation in Theorem \ref{main-result-2}, we prove that no other positive solutions exist in a sufficiently small neighborhood of $\mathcal{Z}$. We first establish the Fredholm properties and coercivity bounds of $\mathcal{L}_{\omega_{\lambda,\xi}}$.
	
	\begin{lemma}\label{lem:fredholm-operator}
		The linearized operator $\mathcal{L}_{\omega_{\lambda,\xi}}: \mathcal{D}^\gamma(\mathbb{H}^n) \to \mathcal{D}^{-\gamma}(\mathbb{H}^n)$ is a Fredholm operator of index zero. Furthermore, there exists a constant $\widetilde{C} > 0$ depending only on $n$ and $\gamma$ such that 
		$$
		\Vert{}\mathcal{L}_{\omega_{\lambda,\xi}}v\Vert{}_{\mathcal{D}^{-\gamma}(\mathbb{H}^n)} \ge \widetilde{C} \Vert{}v\Vert{}_{\mathcal{D}^\gamma(\mathbb{H}^n)}
		$$
		for any $v \in (T_{\omega_{\lambda,\xi}}\mathcal{Z})^\perp$.
	\end{lemma}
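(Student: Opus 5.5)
We prove the Fredholm property and the coercive estimate at the standard bubble $\omega_{1,0}$, and then transfer them to an arbitrary $\omega_{\lambda,\xi}$ by symmetry.

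\textbf{Fredholm property.} Let $R:=\mathcal L_\gamma^{-1}:\mathcal D^{-\gamma}(\mathbb H^n)\to\mathcal D^\gamma(\mathbb H^n)$ be the Riesz isometry. Then
\[
R\mathcal L_{\omega_{1,0}}=I-\mathcal K,\qquad \mathcal K v:=(Q^*-1)\,\mathcal L_\gamma^{-1}\bigl(\omega_{1,0}^{\frac{4\gamma}{Q-2\gamma}}v\bigr).
\]
The operator $\mathcal K$ is self-adjoint for $(\cdot,\cdot)_{\mathcal D^\gamma(\mathbb H^n)}$, since $(\mathcal Kv,w)_{\mathcal D^\gamma}=(Q^*-1)\int\omega_{1,0}^{\frac{4\gamma}{Q-2\gamma}}vw\,d\eta$.

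We claim $\mathcal K$ is compact. Take $v_k\rightharpoonup0$ in $\mathcal D^\gamma$. Proposition~\ref{Chen-Wang2017-result} gives the bound in $L^{Q^*}$. Local compactness of the embedding $\mathcal D^\gamma\hookrightarrow L^2_{loc}$ gives $v_k\to0$ in $L^2(B_d(0,R))$. The weight $\omega_{1,0}^{\frac{4\gamma}{Q-2\gamma}}\in L^{Q/(2\gamma)}$ has small norm outside a large Korányi ball. Splitting the integral accordingly and applying Hölder yields $\|\omega_{1,0}^{\frac{4\gamma}{Q-2\gamma}}v_k\|_{\mathcal D^{-\gamma}}\to0$.

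Alternatively, and more in the spirit of the paper, we conjugate with the Cayley map. Formula \eqref{eqn:L-gamma-g-express} and the computation in the proof of Theorem~\ref{main-result-1} show that $g\mapsto\rho$ is an isometry of $\mathcal D^\gamma(\mathbb H^n)$ onto $H^\gamma(\mathbb S^{2n+1})$ with norm $(\cdot,\cdot)_\gamma$, up to a constant. Under this isometry $\mathcal K$ becomes $\lambda_{1,0}(\gamma)\mathcal A_\gamma^{-1}$. That operator is diagonal on $\mathcal H_{j,k}$ with eigenvalues $\lambda_{1,0}/\lambda_{j,k}\to0$, so it is compact. Either way, $I-\mathcal K$ is a compact perturbation of the identity, so $\mathcal L_{\omega_{1,0}}$ is Fredholm of index zero.

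\textbf{Coercive estimate at $\omega_{1,0}$.} By Theorem~\ref{main-result-1}, $\ker(I-\mathcal K)=T_{\omega_{1,0}}\mathcal Z$. Since $I-\mathcal K$ is self-adjoint, it maps $(T_{\omega_{1,0}}\mathcal Z)^\perp$ into itself, and by Fredholm theory it is invertible there. This gives
\[
\|\mathcal L_{\omega_{1,0}}v\|_{\mathcal D^{-\gamma}}=\|(I-\mathcal K)v\|_{\mathcal D^\gamma}\ge \widetilde C\|v\|_{\mathcal D^\gamma}.
\]
Here $\widetilde C$ is the distance from $0$ to the spectrum of $I-\mathcal K$ restricted to that complement. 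On the sphere this spectrum is $\{1-\lambda_{1,0}/\lambda_{j,k}\}$ with $(j,k)\notin\{(1,0),(0,1)\}$. It is bounded away from zero because $\lambda_{j,k}$ is strictly increasing in $j$ and in $k$. We must also check the $(0,0)$ mode, where the value is $1-\lambda_{1,0}/\lambda_{0,0}=1-(Q^*-1)<0$, which is nonzero. Hence $\widetilde C$ depends only on $n$ and $\gamma$.

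\textbf{Uniformity in the parameters.} This is the main point to handle with care. Define
\[
U_{\lambda,\xi}v:=\lambda^{\frac{Q-2\gamma}{2}}v(\delta_\lambda(\xi^{-1}\cdot)).
\]
Then $U_{\lambda,\xi}$ is a unitary map of $\mathcal D^\gamma(\mathbb H^n)$, since $\mathcal L_\gamma$ commutes with left translations and is homogeneous of degree $2\gamma$ under $\delta_\lambda$. It also induces an isometry of $\mathcal D^{-\gamma}$, with weight $\lambda^{\frac{Q+2\gamma}{2}}$. These maps satisfy
\[
\mathcal L_{\omega_{\lambda,\xi}}U_{\lambda,\xi}=U^{-\gamma}_{\lambda,\xi}\mathcal L_{\omega_{1,0}},
\]
which follows from $\omega_{\lambda,\xi}^{\frac{4\gamma}{Q-2\gamma}}=\lambda^{2\gamma}\omega_{1,0}^{\frac{4\gamma}{Q-2\gamma}}(\delta_\lambda(\xi^{-1}\cdot))$. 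By Theorem~\ref{main-result-2}, $U_{\lambda,\xi}$ maps $T_{\omega_{1,0}}\mathcal Z$ onto $T_{\omega_{\lambda,\xi}}\mathcal Z$. Being unitary, it also maps the orthogonal complements onto each other. Therefore the estimate, with the same constant $\widetilde C$, transfers to every $\omega_{\lambda,\xi}$, and the Fredholm property of index zero transfers as well.

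The only delicate verification is this intertwining identity on the dual side, namely that the $\mathcal D^{-\gamma}$ norm is preserved with the correct power of $\lambda$. We check it through the duality pairing $\langle U^{-\gamma}f,U v\rangle=\langle f,v\rangle$.
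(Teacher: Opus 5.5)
Your proposal is correct and follows the paper's route. You factor $\mathcal L_{\omega}=\mathcal L_\gamma(I-\mathcal K_\omega)$, prove $\mathcal K_\omega$ compact by the ball/exterior splitting with local compact embedding, use non-degeneracy to identify the kernel with $T_\omega\mathcal Z$ and invert on its orthogonal complement, and transfer the constant along $\mathcal Z$ by the unitary maps $\mathcal U_{\lambda,\xi}$. Your additions are sound and slightly sharpen the paper's argument: self-adjointness of $\mathcal K$ makes it rigorous that the restriction is a bijection of $(T_\omega\mathcal Z)^\perp$ onto itself, you check explicitly that $\mathcal U_{\lambda,\xi}$ carries $T_{\omega_{1,0}}\mathcal Z$ onto $T_{\omega_{\lambda,\xi}}\mathcal Z$, and your spherical identification of the constant as $\min_{(j,k)\neq(1,0),(0,1)}|1-\lambda_{1,0}/\lambda_{j,k}|$ anticipates Lemma~\ref{lem:characterize-point-spectrum} and Theorem~\ref{main-result-6}.
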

	
	\begin{proof}
		Based on the non-degeneracy established in Theorem \ref{main-result-2}, we study the linearized operator $\mathcal{L}_{\omega_{\lambda,\xi}}$  by using the perturbation methods introduced in \cite{Chen-Wang2017}. To overcome the difficulty that the fractional operator lacks an explicit pointwise expression, we rewrite the operator as 
		$$
		\mathcal{L}_{\omega_{\lambda,\xi}} = \mathcal{L}_\gamma(I - \mathcal{K}_{\omega_{\lambda,\xi}}),
		$$
		where
		$$
		\mathcal{K}_{\omega_{\lambda,\xi}} := \mathcal{L}_\gamma^{-1} \mathcal M_{V_{\omega_{\lambda,\xi}}},
		$$
		and $\mathcal M_{V_{\omega_{\lambda,\xi}}}$ is the multiplication operator with the potential 
		\begin{equation}\label{eqn:V-omega-define}
			V_{\omega_{\lambda,\xi}} := (Q^*-1)\omega_{\lambda,\xi}^{\frac{4\gamma}{Q-2\gamma}}.
		\end{equation}
		Since the exponent is critical, the bubble solution $\omega_{\lambda,\xi}$ belongs to $L^{\frac{2Q}{Q-2\gamma}}(\mathbb{H}^n)$.  Together this with the identity
		$$
		\int_{\mathbb{H}^n} \vert{}V_{\omega_{\lambda,\xi}}\vert{}^{\frac{Q}{2\gamma}} d\eta = (Q^*-1)^{\frac{Q}{2\gamma}} \int_{\mathbb{H}^n} \left(\omega_{\lambda,\xi}^{\frac{4\gamma}{Q-2\gamma}}\right)^{\frac{Q}{2\gamma}} d\eta = (Q^*-1)^{\frac{Q}{2\gamma}} \int_{\mathbb{H}^n} \omega_{\lambda,\xi}^{\frac{2Q}{Q-2\gamma}} d\eta,
		$$
		which implies that $V_{\omega_{\lambda,\xi}}\in L^{\frac{Q}{2\gamma}}(\mathbb{H}^n)$.  Next, we claim that the operator $\mathcal{K}_{\omega_{\lambda,\xi}}$ is a compact operator in the fractional Sobolev space $\mathcal{D}^\gamma(\mathbb{H}^n)$. To rigorously prove this claim, we proceed in two steps: we first verify that $\mathcal{K}_{\omega_{\lambda,\xi}}$ is a bounded linear operator in $\mathcal{D}^\gamma(\mathbb{H}^n)$, and then show its compactness by exploiting the decay properties of the bubble solutions. 
		
		\textbf{Step 1: Boundedness of $\mathcal{K}_{\omega_{\lambda,\xi}}$ in $\mathcal{D}^\gamma(\mathbb{H}^n)$.}
		
		According to the definition of the fractional Sobolev space over $\mathbb{H}^n$, the squared norm of any function $u \in \mathcal{D}^\gamma(\mathbb{H}^n)$ is equivalently given by the quadratic form (\ref{eqn:a-gamma-u-define}):
		$$
		a_\gamma[u] = \int_{\mathbb{H}^n} {u}\mathcal{L}_\gamma u  d\eta.
		$$
		To evaluate the boundedness of $\mathcal{K}_{\omega_{\lambda,\xi}}$ in $\mathcal{D}^\gamma(\mathbb{H}^n)$, we compute the corresponding quadratic form of $\mathcal{K}_{\omega_{\lambda,\xi}} u$. By definition of $\mathcal{K}_{\omega_{\lambda,\xi}}$,  we have
		$$
		\mathcal{K}_{\omega_{\lambda,\xi}} u = \mathcal{L}_\gamma^{-1}(\mathcal{M}_{V_{\omega_{\lambda,\xi}}} u) = \mathcal{L}_\gamma^{-1}(V_{\omega_{\lambda,\xi}} u),
		$$
		which implies 
		\begin{equation}\label{eqn:mathcal-L-Ku-Vu}
			\mathcal{L}_\gamma (\mathcal{K}_{\omega_{\lambda,\xi}} u) = V_{\omega_{\lambda,\xi}} u. 
		\end{equation}
		Substituting this into the quadratic form yields:
		$$
		a_\gamma[\mathcal{K}_{\omega_{\lambda,\xi}} u] = \int_{\mathbb{H}^n} \overline{(\mathcal{K}_{\omega_{\lambda,\xi}} u)} \mathcal{L}_\gamma (\mathcal{K}_{\omega_{\lambda,\xi}} u) d\eta = \int_{\mathbb{H}^n} \overline{(\mathcal{L}_\gamma^{-1}(V_{\omega_{\lambda,\xi}} u))} (V_{\omega_{\lambda,\xi}} u) d\eta.
		$$
		As in the proof of (2.10) in \cite{Chen-Wang2017}, we have
		$$
		\overline{(\mathcal{K}_{\omega_{\lambda,\xi}} u)(\eta)} = C_1({n,\gamma} )\int_{\mathbb{H}^n} \frac{\overline{V_{\omega_{\lambda,\xi}}(\tilde{\eta})u(\tilde{\eta})}}{\vert{}\tilde{\eta}^{-1}{\eta}\vert{}^{Q-2\gamma}}  d\tilde{\eta}.
		$$
		Together with the identity
		\begin{equation}\label{eqn:mathcal-L-V-u-identity}
			\mathcal{L}_\gamma (\mathcal{K}_{\omega_{\lambda,\xi}} u)(\eta) = V_{\omega_{\lambda,\xi}}(\eta)u(\eta),
		\end{equation}
		this gives
		$$
		\begin{aligned}
			a_\gamma[\mathcal{K}_{\omega_{\lambda,\xi}} u] &= \int_{\mathbb{H}^n} \overline{(\mathcal{K}_{\omega_{\lambda,\xi}} u)(\eta)} \cdot \left( \mathcal{L}_\gamma (\mathcal{K}_{\omega_{\lambda,\xi}} u)(\eta) \right)  d\eta\\
			& =\int_{\mathbb{H}^n} \overline{(\mathcal{K}_{\omega_{\lambda,\xi}} u)(\eta)} \cdot (V_{\omega_{\lambda,\xi}}(\eta)u(\eta)) d\eta\\
			&=\int_{\mathbb{H}^n} \left( C_1({n,\gamma}) \int_{\mathbb{H}^n} \frac{\overline{V_{\omega_{\lambda,\xi}}(\tilde{\eta})u(\tilde{\eta})}}{\vert{}\tilde{\eta}^{-1}\eta\vert{}^{Q-2\gamma}} \, d\tilde{\eta}\right) (V_{\omega_{\lambda,\xi}}(\eta)u(\eta)) d\eta\\
			&= C_1({n,\gamma})\int_{\mathbb{H}^n} \int_{\mathbb{H}^n} \frac{\overline{V_{\omega_{\lambda,\xi}}(\tilde{\eta})u(\tilde{\eta})} (V_{\omega_{\lambda,\xi}}(\eta)u(\eta))}{\vert{}\tilde{\eta}^{-1}\eta\vert{}^{Q-2\gamma}}  d\tilde{\eta} d\eta.
		\end{aligned}
		$$
		Setting $d = Q-2\gamma$, the exponent $p$ in Proposition \ref{Frank-Lieb-Theorem} is $ \frac{2Q}{Q+2\gamma}$. Choosing $f = g = V_{\omega_{\lambda,\xi}}u$  and noting that $V_{\omega_{\lambda,\xi}}$ is a real-valued function, we have 
		$$
		a_\gamma[\mathcal{K}_{\omega_{\lambda,\xi}} u] \leq C_2({n,\gamma}) \Vert{}V_{\omega_{\lambda,\xi}} u\Vert{}_{L^{\frac{2Q}{Q+2\gamma}}(\mathbb{H}^n)}^2.
		$$
		Since $V_{\omega_{\lambda,\xi}} \in L^{\frac{Q}{2\gamma}}(\mathbb{H}^n)$, we apply H\"{o}lder's inequality with $\frac{2\gamma}{Q}$ and $\frac{Q-2\gamma}{2Q} $ to obtain
		$$
		\Vert{}V_{\omega_{\lambda,\xi}} u\Vert{}_{L^{\frac{2Q}{Q+2\gamma}}(\mathbb{H}^n)} \leq \Vert{}V_{\omega_{\lambda,\xi}}\Vert{}_{L^{\frac{Q}{2\gamma}}(\mathbb{H}^n)} \Vert{}u\Vert{}_{L^{\frac{2Q}{Q-2\gamma}}(\mathbb{H}^n)}.
		$$
		Furthermore, Proposition \ref{Chen-Wang2017-result} ensures that 
		\begin{equation}\label{eqn:u-LQstar-bound}
			\Vert{}u\Vert{}_{L^{\frac{2Q}{Q-2\gamma}}(\mathbb{H}^n)}^2 \leq C_3(n,\gamma) a_\gamma[u],
		\end{equation}
		thus,
		$$
		\begin{aligned}
			a_\gamma[\mathcal{K}_{\omega_{\lambda,\xi}} u]&\leq C_2(n,\gamma) \Vert{}V_{\omega_{\lambda,\xi}} u\Vert{}_{L^{\frac{2Q}{Q+2\gamma}}(\mathbb{H}^n)}^2
			\leq C_2(n,\gamma)\Vert{}V_{\omega_{\lambda,\xi}}\Vert{}_{L^{\frac{Q}{2\gamma}}(\mathbb{H}^n)}^2 \Vert{}u\Vert{}_{L^{\frac{2Q}{Q-2\gamma}}(\mathbb{H}^n)}^2\\
			&\leq C_2(n,\gamma)C_3(n,\gamma) \Vert{}V_{\omega_{\lambda,\xi}}\Vert{}_{L^{\frac{Q}{2\gamma}}(\mathbb{H}^n)}^2a_\gamma[u].
		\end{aligned}
		$$
		Because $\Vert{}V_{\omega_{\lambda,\xi}}\Vert{}_{L^{\frac{Q}{2\gamma}}(\mathbb{H}^n)} < \infty$, this confirms that $\mathcal{K}_{\omega_{\lambda,\xi}}$ is a bounded linear operator mapping $\mathcal{D}^\gamma(\mathbb{H}^n)$ continuously into itself.
		
		\textbf{Step 2: Compactness of $\mathcal{K}_{\omega_{\lambda,\xi}}$ on $\mathcal{D}^\gamma(\mathbb{H}^n)$.}
		
		For this purpose, we consider a sequence $u_m \rightharpoonup 0$ in $\mathcal{D}^\gamma(\mathbb{H}^n)$. By the boundedness of $\{u_m\}$ in $\mathcal{D}^\gamma(\mathbb{H}^n)$, we have $a_{\gamma}[u_m] \le \widetilde{M}$ for all $m$ and some constant $\widetilde{M} > 0$. We aim to show that $\mathcal{K}_{\omega_{\lambda,\xi}} u_m \to 0$ in $\mathcal{D}^\gamma(\mathbb{H}^n)$, which is equivalent to showing $a_\gamma[\mathcal{K}_{\omega_{\lambda,\xi}} u_m] \to 0$. From Step 1, it suffices to prove that $\Vert{}V_{\omega_{\lambda,\xi}} u_m\Vert{}_{L^{\frac{2Q}{Q+2\gamma}}(\mathbb{H}^n)} \to 0$. The explicit profile of $\omega_0$ in (\ref{eqn:omega-define}) implies that $\omega_{\lambda,\xi}(\eta) \to 0$ as $\vert{}\eta\vert{} \to \infty$. Consequently,  $V_{\omega_{\lambda,\xi}}(\eta) \to 0$ as $\vert{}\eta\vert{} \to \infty$. Thus, for any small $\varepsilon > 0$, we can choose a sufficiently large $R > 0$ such that  
		$$
		\Vert{}V_{\omega_{\lambda,\xi}}\Vert{}_{L^{\frac{Q}{2\gamma}}(\mathbb{H}^n \setminus B_d(0,R))} < \varepsilon.
		$$
		Combining this with (\ref{eqn:u-LQstar-bound}) and H\"{o}lder's inequality, we deduce that
		\begin{equation}\label{eqn:Hn-exter-estimate}
			\begin{aligned}
				\Vert{}V_{\omega_{\lambda,\xi}} u_m\Vert{}_{L^{\frac{2Q}{Q+2\gamma}}(\mathbb{H}^n \setminus B_d(0,R))} &\leq \Vert{}V_{\omega_{\lambda,\xi}}\Vert{}_{L^{\frac{Q}{2\gamma}}(\mathbb{H}^n \setminus B_d(0,R))} \Vert{}u_m\Vert{}_{L^{\frac{2Q}{Q-2\gamma}}(\mathbb{H}^n)}\\
				& \leq \sqrt{C_3(n,\gamma)a_\gamma[u_m] } \ \varepsilon.
			\end{aligned}
		\end{equation}
		For the interior domain $B_d(0,R)$, we note that the standard bubble $\omega_0(z,t)$ attains its unique global maximum since its denominator satisfies $(1+\vert{}z\vert{}^2)^2 + t^2 \geq 1$. Consequently, the translated and rescaled bubble $\omega_{\lambda,\xi}$ is smooth and globally bounded by $C(n,\gamma)\lambda^{\frac{Q-2\gamma}{2}}$. This guarantees that $V_{\omega_{\lambda,\xi}}$ has no singularities and is uniformly bounded in $B_d(0,R)$. Therefore, 
		$$
		\begin{aligned}
			\Vert{}V_{\omega_{\lambda,\xi}} u_m\Vert{}_{L^{\frac{2Q}{Q+2\gamma}}(B_d(0,R))} &\leq \Vert{}V_{\omega_{\lambda,\xi}}\Vert{}_{L^\infty(B_d(0,R))} \Vert{}u_m\Vert{}_{L^{\frac{2Q}{Q+2\gamma}}(B_d(0,R))} \\
			&\leq (Q^*-1)C(n,\gamma)^\frac{4\gamma}{Q-2\gamma}\lambda^{2\gamma} \Vert{}u_m\Vert{}_{L^{\frac{2Q}{Q+2\gamma}}(B_d(0,R))}.
		\end{aligned}
		$$
		Since $1<\frac{2Q}{Q+2\gamma} < \frac{2Q}{Q-2\gamma}$, the locally compact embedding (see \cite[page 14]{Guidi-Maalaoui-Martino2018}) implies that $u_m \to 0$ in $L^{\frac{2Q}{Q+2\gamma}}(B_d(0, R))$, which means that 
		$$
		\Vert{}V_{\omega_{\lambda,\xi}} u_m\Vert{}_{L^{\frac{2Q}{Q+2\gamma}}(B_d(0,R))} \to 0
		$$
		as $m \to \infty$.  Combining this with (\ref{eqn:Hn-exter-estimate}), we arrive at
		$$
		\begin{aligned}
			\limsup_{m \to \infty}\Vert{}V_{\omega_{\lambda,\xi}} u_m\Vert{}_{L^{\frac{2Q}{Q+2\gamma}}(\mathbb{H}^n)}& \leq \limsup_{m \to \infty}\Vert{}V_{\omega_{\lambda,\xi}} u_m\Vert{}_{L^{\frac{2Q}{Q+2\gamma}}(B_d(0,R))}\\
			&\ \  + \limsup_{m \to \infty}\Vert{}V_{\omega_{\lambda,\xi}} u_m\Vert{}_{L^{\frac{2Q}{Q+2\gamma}}(\mathbb{H}^n \setminus B_d(0,R))}\\
			&\leq  C'_3(n,\gamma) \varepsilon.
		\end{aligned}
		$$
		Letting $\varepsilon\to 0$, we have $\Vert{}V_{\omega_{\lambda,\xi}} u_m\Vert{}_{L^{\frac{2Q}{Q+2\gamma}}(\mathbb{H}^n)} \to 0$,
		which implies that $a_\gamma[\mathcal{K}_{\omega_{\lambda,\xi}} u_m] \to 0$ as $m\to\infty$. This proves $\mathcal{K}_{\omega_{\lambda,\xi}}$ is a compact operator in $\mathcal{D}^\gamma(\mathbb{H}^n)$.
		
		As an isometric isomorphism from $\mathcal{D}^\gamma(\mathbb{H}^n)$ to $\mathcal{D}^{-\gamma}(\mathbb{H}^n)$, the operator $\mathcal{L}_\gamma$ is a Fredholm operator of index zero. Since $\mathcal{K}_{\omega_{\lambda,\xi}}$ acts as a compact operator on $\mathcal{D}^\gamma(\mathbb{H}^n)$, the classical Riesz-Fredholm theory in \cite{Martensen1987} guarantees that $I- \mathcal{K}_{\omega_{\lambda,\xi}}$ is also a Fredholm operator of index zero. By the composition property of Fredholm operators and $\mathcal{L}_{\omega_{\lambda,\xi}} = \mathcal{L}_\gamma(I - \mathcal{K}_{\omega_{\lambda,\xi}})$, we know that $\mathcal{L}_{\omega_{\lambda,\xi}}$ is also a Fredholm operator of index zero. Furthermore, the injectivity of $\mathcal{L}_\gamma$ implies that its null space is trivial. Consequently, $\mathcal{L}_{\omega_{\lambda,\xi}} u = \mathcal{L}_\gamma[(I - \mathcal{K}_{\omega_{\lambda,\xi}})u] = 0$ holds if and only if $(I - \mathcal{K}_{\omega_{\lambda,\xi}})u = 0$, which ensures that the kernel of $\mathcal{L}_{\omega_{\lambda,\xi}}$ is completely determined by $I - \mathcal{K}_{\omega_{\lambda,\xi}}$. From Theorem \ref{main-result-2}, the kernel of the linearized operator $\mathcal{L}_{\omega_{\lambda,\xi}}$ precisely coincides with the tangent space of the manifold $\mathcal{Z}$ at $\omega_{\lambda,\xi}$, i.e., $\ker \mathcal{L}_{\omega_{\lambda,\xi}} = T_{\omega_{\lambda,\xi}} \mathcal{Z}$. Recall that $\mathcal{D}^\gamma(\mathbb{H}^n)$
		admits a direct orthogonal decomposition with respect to the tangent space $T_{\omega_{\lambda,\xi}}\mathcal{Z}$ and its orthogonal complement is 
		$$
		\mathcal{D}^\gamma(\mathbb{H}^n) = T_{\omega_{\lambda,\xi}}\mathcal{Z} \oplus (T_{\omega_{\lambda,\xi}}\mathcal{Z})^\perp.
		$$
		This implies that any $u \in \mathcal{D}^\gamma(\mathbb{H}^n)$ can be uniquely expressed as $u = w + v$, where $w \in T_{\omega_{\lambda,\xi}}\mathcal{Z}$ and $v \in (T_{\omega_{\lambda,\xi}}\mathcal{Z})^\perp$. 
		By restricting the operator $I - \mathcal{K}_{\omega_{\lambda,\xi}}$ to the orthogonal complement $(T_{\omega_{\lambda,\xi}}\mathcal{Z})^\perp$, we effectively eliminate its null space. Since $I - \mathcal{K}_{\omega_{\lambda,\xi}}$ is a Fredholm operator of index zero, the Fredholm alternative theorem ensures that this restricted operator possesses a bounded inverse. For any $f\in \mathcal D^\gamma(\mathbb H^n)$, define
		\begin{equation}\label{eqn:f-lambda-xi-define}
			f_{\lambda,\xi}(\eta):=
			\lambda^{\frac{Q-2\gamma}{2}}
			f\bigl(\delta_\lambda(\xi^{-1}\eta)\bigr)
		\end{equation}
		and write $\mathcal U_{\lambda,\xi}f:=f_{\lambda,\xi}$. By the translation invariance and homogeneity of $\mathcal L_\gamma$, we have
		\begin{equation}\label{eqn:mathcal L-flambdaxi-expression}
			\mathcal L_\gamma f_{\lambda,\xi}(\eta)=\lambda^{\frac{Q+2\gamma}{2}}
			(\mathcal L_\gamma f)\bigl(\delta_\lambda(\xi^{-1}\eta)\bigr).
		\end{equation}
		Therefore, we obtain that
		$$
		\begin{aligned}
			\|f_{\lambda,\xi}\|_{\mathcal D^\gamma(\mathbb H^n)}^2&=\int_{\mathbb H^n}
			f_{\lambda,\xi}(\eta)\mathcal L_\gamma f_{\lambda,\xi}(\eta)d\eta\\
			&=\int_{\mathbb H^n}\lambda^{\frac{Q-2\gamma}{2}}f\bigl(\delta_\lambda(\xi^{-1}\eta)\bigr)
			\lambda^{\frac{Q+2\gamma}{2}}(\mathcal L_\gamma f)\bigl(\delta_\lambda(\xi^{-1}\eta)\bigr)d\eta\\
			&=\int_{\mathbb H^n}\lambda^{\frac{Q-2\gamma}{2}}\cdot \lambda^{\frac{Q+2\gamma}{2}}\cdot \lambda^{-Q} f(\eta)
			\mathcal L_\gamma f(\eta)d\eta\\
			&=\int_{\mathbb H^n}f(\eta)\mathcal L_\gamma f(\eta)d\eta=\|f\|_{\mathcal D^\gamma(\mathbb H^n)}^2.
		\end{aligned}
		$$
		Since $\mathcal U_{\lambda,\xi}$ is also a linear bijection of $\mathcal D^\gamma(\mathbb H^n)$, it is a unitary transformation. On one hand, for any \(f\in\mathcal D^\gamma(\mathbb H^n)\), applying (\ref{eqn:mathcal-L-Ku-Vu}) and (\ref{eqn:mathcal L-flambdaxi-expression}) gives
		$$
		\begin{aligned}
			\mathcal L_\gamma\left[(\mathcal K_{\omega_{1,0}}f)_{\lambda,\xi}\right](\eta)
			&=\lambda^{\frac{Q+2\gamma}{2}}\left[\mathcal L_\gamma(\mathcal K_{\omega_{1,0}}f)\right]\bigl(\delta_\lambda(\xi^{-1}\eta)\bigr)\\
			&=\lambda^{\frac{Q+2\gamma}{2}}V_{\omega_{1,0}}\bigl(\delta_\lambda(\xi^{-1}\eta)\bigr)f\bigl(\delta_\lambda(\xi^{-1}\eta)\bigr).
		\end{aligned}
		$$
		On the other hand, it follows from (\ref{eqn:w-lambda-xi-define}) and (\ref{eqn:V-omega-define}) that
		$$
		\omega_{\lambda,\xi}(\eta)=
		\lambda^{\frac{Q-2\gamma}{2}}
		\omega_{1,0}\bigl(\delta_\lambda(\xi^{-1}\eta)\bigr)
		$$
		and
		$$
			V_{\omega_{\lambda,\xi}}(\eta)=(Q^*-1)\omega_{\lambda,\xi}^{\frac{4\gamma}{Q-2\gamma}}(\eta)
			=(Q^*-1)\lambda^{2\gamma}\omega_{1,0}^{\frac{4\gamma}{Q-2\gamma}}
			\bigl(\delta_\lambda(\xi^{-1}\eta)\bigr)
			\lambda^{2\gamma}V_{\omega_{1,0}}\bigl(\delta_\lambda(\xi^{-1}\eta)\bigr).
		$$
		Hence, 
		$$
		\begin{aligned}
			V_{\omega_{\lambda,\xi}}(\eta)f_{\lambda,\xi}(\eta)
			&=\lambda^{2\gamma}V_{\omega_{1,0}}\bigl(\delta_\lambda(\xi^{-1}\eta)\bigr)
			\lambda^{\frac{Q-2\gamma}{2}}f\bigl(\delta_\lambda(\xi^{-1}\eta)\bigr)\\
			&=\lambda^{\frac{Q+2\gamma}{2}}V_{\omega_{1,0}}\bigl(\delta_\lambda(\xi^{-1}\eta)\bigr)f\bigl(\delta_\lambda(\xi^{-1}\eta)\bigr).
		\end{aligned}
		$$
		Consequently,
		$$
		\mathcal L_\gamma\left[(\mathcal K_{\omega_{1,0}}f)_{\lambda,\xi}\right]
		=V_{\omega_{\lambda,\xi}}f_{\lambda,\xi}.
		$$
		By (\ref{eqn:mathcal-L-Ku-Vu}), we also have
		$$
		\mathcal L_\gamma\left(\mathcal K_{\omega_{\lambda,\xi}}f_{\lambda,\xi}\right)
		=V_{\omega_{\lambda,\xi}}f_{\lambda,\xi}.
		$$
		Invoking the fact that $\mathcal L_\gamma$ is injective yields
		$$
		\mathcal K_{\omega_{\lambda,\xi}}f_{\lambda,\xi}=(\mathcal K_{\omega_{1,0}}f)_{\lambda,\xi}.
		$$
		By the definition of $\mathcal  U_{\lambda,\xi}$, we have 
		$$
		f_{\lambda,\xi}=\mathcal U_{\lambda,\xi}f
		\ \ 
		\mbox{and}
		\ \ 
		(\mathcal K_{\omega_{1,0}}f)_{\lambda,\xi}=\mathcal U_{\lambda,\xi}\mathcal K_{\omega_{1,0}}f.
		$$ 
		Hence,
		$$
		\mathcal K_{\omega_{\lambda,\xi}}\mathcal U_{\lambda,\xi}f=\mathcal K_{\omega_{\lambda,\xi}}f_{\lambda,\xi}=(\mathcal K_{\omega_{1,0}}f)_{\lambda,\xi}=\mathcal  U_{\lambda,\xi}\mathcal K_{\omega_{1,0}}f
		$$
		for any $f\in\mathcal D^\gamma(\mathbb H^n)$, which implies that
		$$
		\mathcal K_{\omega_{\lambda,\xi}}\mathcal U_{\lambda,\xi}=\mathcal U_{\lambda,\xi}\mathcal K_{\omega_{1,0}}.
		$$
		Since \(\mathcal U_{\lambda,\xi}\) is unitary, it is invertible, and hence
		\begin{equation}\label{eqn:Kgeneral-K10-mathcalU-rela}
			\mathcal K_{\omega_{\lambda,\xi}}=\mathcal U_{\lambda,\xi}\mathcal K_{\omega_{1,0}}\mathcal U_{\lambda,\xi}^{-1}.
		\end{equation}
		Thus, $\mathcal K_{\omega_{\lambda,\xi}}$ and $\mathcal K_{\omega_{1,0}}$ are unitarily equivalent, and there exists a constant $M :=M(n,\gamma)> 0$ such that 
		\begin{equation}\label{eqn:I-mathcalK-inverse-bounded}
			\Vert{}[(I - \mathcal{K}_{\omega_{\lambda,\xi}})|_{(T_{\omega_{\lambda,\xi}}\mathcal{Z})^\perp}]^{-1} f\big\Vert{}_{\mathcal{D}^\gamma(\mathbb{H}^n)} \le M\Vert{}f\Vert{}_{\mathcal{D}^\gamma(\mathbb{H}^n)}.
		\end{equation}
		Substituting $f = (I - \mathcal{K}_{\omega_{\lambda,\xi}})v$ into (\ref{eqn:I-mathcalK-inverse-bounded}) yields
		$$
		\Vert{}v\Vert{}_{\mathcal{D}^\gamma(\mathbb{H}^n)} \le M\Vert{}(I - \mathcal{K}_{\omega_{\lambda,\xi}})v\Vert{}_{\mathcal{D}^\gamma(\mathbb{H}^n)}.
		$$
		Setting $C_4(n,\gamma) := \frac{1}{M} > 0$, we obtain that
		$$
		\Vert{}(I - \mathcal{K}_{\omega_{\lambda,\xi}})v\Vert{}_{\mathcal{D}^\gamma(\mathbb{H}^n)} \ge C_4(n,\gamma)  \Vert{}v\Vert{}_{\mathcal{D}^\gamma(\mathbb{H}^n)} 
		$$
		for any $v \in (T_{\omega_{\lambda,\xi}}\mathcal{Z})^\perp$. Recall that $\mathcal{L}_\gamma$ is an isometric isomorphism from $\mathcal{D}^\gamma(\mathbb{H}^n)$ to $\mathcal{D}^{-\gamma}(\mathbb{H}^n)$, which implies that the $\mathcal{D}^{-\gamma}(\mathbb{H}^n)$-norm of $\mathcal{L}_\gamma u$ is equivalent to the $\mathcal{D}^\gamma(\mathbb{H}^n)$-norm of $u$ for any $u\in \mathcal{D}^\gamma(\mathbb{H}^n)$. Combining this norm equivalence with $\mathcal{L}_{\omega_{\lambda,\xi}} = \mathcal{L}_\gamma(I - \mathcal{K}_{\omega_{\lambda,\xi}})$, we deduce that for any $v \in (T_{\omega_{\lambda,\xi}}\mathcal{Z})^\perp$,
		$$
		\begin{aligned}
			\Vert{}\mathcal{L}_{\omega_{\lambda,\xi}} v\Vert{}_{\mathcal{D}^{-\gamma}(\mathbb{H}^n)}& = \Vert{}\mathcal{L}_\gamma \big[ (I - \mathcal{K}_{\omega_{\lambda,\xi}})v \big] \Vert{}_{\mathcal{D}^{-\gamma}(\mathbb{H}^n)} 
			\geq C_5(n,\gamma)  \Vert{}(I - \mathcal{K}_{\omega_{\lambda,\xi}})v \Vert{}_{\mathcal{D}^\gamma(\mathbb{H}^n)}\\
			& \geq C_4(n,\gamma)  C_5(n,\gamma) \Vert{}v\Vert{}_{\mathcal{D}^\gamma(\mathbb{H}^n)}=:\widetilde{C} \Vert{}v\Vert{}_{\mathcal{D}^\gamma(\mathbb{H}^n)},
		\end{aligned}
		$$
		where $\widetilde{C} > 0$ is a constant depending only on $n$ and $\gamma$. This completes the proof.	
	\end{proof}
	
	The following proposition provides a local parametrization of the bubble manifold $\mathcal Z$. It corresponds to the single-bubble and
	fixed-amplitude case in \cite[Proposition~2.2]{Chtioui-Hajaiej-Yacoub2021}, where the unique minimization and the associated orthogonal decomposition are established for fractional CR bubbles on $\mathbb S^{2n+1}$. Through the Cayley correspondence, the corresponding problem is
	transferred from $H^\gamma(\mathbb S^{2n+1})$ to $\mathcal D^\gamma(\mathbb H^n)$. The translation and dilation invariance of the bubble family then yields a uniform neighborhood along
	$\mathcal Z$.
	
	\begin{proposition}\label{lem:infimum-decomposition}
		There exists $\epsilon_0>0$ sufficiently small such that if $u\in \mathcal D^\gamma(\mathbb H^n)$ satisfies
		$$
		\inf_{\omega\in\mathcal{Z}}\|u-\omega\|_{\mathcal{D}^\gamma(\mathbb{H}^n)}<\epsilon_0,
		$$
		then there exists a unique $\omega_u\in\mathcal{Z}$ such that
		$$
		\|u-\omega_u\|_{\mathcal{D}^\gamma(\mathbb{H}^n)}=\inf_{\omega\in\mathcal{Z}}\|u-\omega\|_{\mathcal{D}^\gamma(\mathbb{H}^n)}.
		$$
		Furthermore, if we denote $v:=u-\omega_u$, then for every $\varphi\in T_{\omega_u}\mathcal{Z}$, $v$ satisfies
		$$
		(v,\varphi)_{\mathcal{D}^\gamma(\mathbb{H}^n)}=0,
		$$
		where $(\cdot,\cdot)_{\mathcal{D}^\gamma(\mathbb{H}^n)}$ denotes the inner product of $\mathcal{D}^\gamma(\mathbb{H}^n)$ associated to the norm $\|\cdot\|_{\mathcal{D}^\gamma(\mathbb{H}^n)}$.
		
	\end{proposition}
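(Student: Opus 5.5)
The plan is to reduce everything, by the unitary group action $\mathcal U_{\lambda,\xi}$ from the proof of Lemma \ref{lem:fredholm-operator}, to a neighborhood of the standard bubble $\omega_{1,0}$. There we apply the implicit function theorem to the first-order condition for minimizing $(\lambda,\xi)\mapsto\|u-\omega_{\lambda,\xi}\|^2_{\mathcal D^\gamma(\mathbb H^n)}$. The orthogonality statement is then the Euler--Lagrange equation of this finite-dimensional minimization.

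\textbf{Step 1: smoothness and the Gram matrix.} The map $\Phi:(0,\infty)\times\mathbb H^n\to\mathcal D^\gamma(\mathbb H^n)$, $(\lambda,\xi)\mapsto\omega_{\lambda,\xi}$, is $C^2$. This follows from the explicit formula (\ref{eqn:w-lambda-xi-define}) and the decay of the derivatives, which are bounded by multiples of $\varrho^{-\frac{Q-2\gamma}{4}}$. Its differential at $(1,0)$ has range $T_{\omega_{1,0}}\mathcal Z$. By Theorem \ref{main-result-1} and its proof, this range is spanned by the $2n+2$ functions corresponding, via the Cayley transform, to $\operatorname{Re}\zeta_j$ and $\operatorname{Im}\zeta_j$. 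These are linearly independent, so the Gram matrix
\[
G_{ab}:=\bigl(\partial_a\omega_{1,0},\partial_b\omega_{1,0}\bigr)_{\mathcal D^\gamma(\mathbb H^n)}
\]
is positive definite, and $\mathcal Z$ is an immersed $(2n+2)$-dimensional manifold. A tangent vector at a general point is transported by $\mathcal U_{\lambda,\xi}$. Since $\omega_{\lambda\mu,\,\xi\delta_{\lambda^{-1}}(\xi')}=\mathcal U_{\lambda,\xi}\,\omega_{\mu,\xi'}$, the unitary $\mathcal U_{\lambda,\xi}$ maps $\mathcal Z$ onto itself and $T_{\omega_{1,0}}\mathcal Z$ onto $T_{\omega_{\lambda,\xi}}\mathcal Z$.

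\textbf{Step 2: local minimizer near $\omega_{1,0}$.} Define
\[
F(u,\lambda,\xi):=\bigl(u-\omega_{\lambda,\xi},\,\partial_a\omega_{\lambda,\xi}\bigr)_{\mathcal D^\gamma(\mathbb H^n)},\qquad a=1,\dots,2n+2 .
\]
We have $F(\omega_{1,0},1,0)=0$, and $D_{(\lambda,\xi)}F(\omega_{1,0},1,0)=-G$ is invertible. The implicit function theorem therefore gives $r,\epsilon_1>0$ and a unique $C^1$ map $u\mapsto(\lambda(u),\xi(u))$ on $B(\omega_{1,0},\epsilon_1)$ with values in $B_r(1,0)$ solving $F=0$.

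Next, the Hessian of $\|u-\omega_{\lambda,\xi}\|^2$ at $(\omega_{1,0},1,0)$ equals $2G>0$. By continuity it stays positive definite on $B(\omega_{1,0},\epsilon_1)\times B_r(1,0)$ after shrinking. Hence this critical point is the unique minimizer over $B_r(1,0)$.

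\textbf{Step 3: excluding far-away bubbles (main obstacle).} One must show that if $\|u-\omega_{1,0}\|<\epsilon$, then every $\omega_{\lambda,\xi}$ with $(\lambda,\xi)\notin B_r(1,0)$ satisfies $\|u-\omega_{\lambda,\xi}\|\ge 2\epsilon$. It suffices to prove
\[
\inf_{(\lambda,\xi)\notin B_r(1,0)}\|\omega_{1,0}-\omega_{\lambda,\xi}\|=:3\delta_r>0 .
\]
Note that
\[
\|\omega_{1,0}-\omega_{\lambda,\xi}\|^2=2\|\omega_{1,0}\|^2-2\int\omega_{\lambda,\xi}^{Q^*-1}\,\omega_{1,0}\,d\eta ,
\]
using the equation from Lemma \ref{solution-sca-tran}. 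The interaction integral is continuous in $(\lambda,\xi)$. It tends to $0$ as $\lambda\to0$, $\lambda\to\infty$, or $|\xi|\to\infty$, by concentration/vanishing of bubbles. By the Hölder equality case it is strictly smaller than $\|\omega_{1,0}\|^2$ unless $\omega_{\lambda,\xi}=\omega_{1,0}$, i.e.\ unless $(\lambda,\xi)=(1,0)$. A compactness argument on the complement of $B_r(1,0)$, with the noncompact ends handled by the vanishing limits, then gives $\delta_r>0$. Taking $\epsilon_0<\min(\epsilon_1,\delta_r)$ and $\epsilon_0$ also small enough that the nearest bubble lies within $\epsilon_1$ of $u$, the infimum is attained and is uniquely attained inside $B_r(1,0)$.

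\textbf{Step 4: conclusion.} For general $u$, pick $\omega_{\lambda_0,\xi_0}$ with $\|u-\omega_{\lambda_0,\xi_0}\|<\epsilon_0$ and apply Steps 2--3 to $\mathcal U_{\lambda_0,\xi_0}^{-1}u$. The constants are uniform because $\mathcal U_{\lambda_0,\xi_0}$ is unitary and preserves $\mathcal Z$. Orthogonality of $v=u-\omega_u$ to $T_{\omega_u}\mathcal Z$ is exactly $F=0$, i.e.\ the vanishing of the first variation at the minimizer.
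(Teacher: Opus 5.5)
Your argument is correct, but it is not the paper's route, because the paper gives no self-contained proof of this proposition. It invokes the Bahri--Coron-type minimization result \cite[Proposition~2.2]{Chtioui-Hajaiej-Yacoub2021} for fractional CR bubbles on $\mathbb S^{2n+1}$. It then transfers that result to $\mathcal D^\gamma(\mathbb H^n)$ through the Cayley correspondence, and appeals to translation and dilation invariance to make the neighborhood uniform along $\mathcal Z$.

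You instead prove the statement directly on $\mathbb H^n$:
\begin{itemize}
\item the unitary action $\mathcal U_{\lambda,\xi}$ reduces everything to a neighborhood of $\omega_{1,0}$ (your composition rule $\mathcal U_{\lambda,\xi}\omega_{\mu,\xi'}=\omega_{\lambda\mu,\,\xi\delta_{\lambda^{-1}}(\xi')}$ is right);
\item the implicit function theorem produces a critical point, driven by the positive definite Gram matrix of the $2n+2$ kernel modes identified in Theorem~\ref{main-result-1};
\item local strict convexity in the parameters makes that critical point the unique minimizer on a parameter ball;
\item the H\"older equality case, together with the vanishing of $\int_{\mathbb H^n}\omega_{\lambda,\xi}^{Q^*-1}\omega_{1,0}\,d\eta$ at the noncompact ends of parameter space, shows that bubbles with far-away parameters stay uniformly far in norm.
\end{itemize}

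The citation route is short, but it leaves two ingredients implicit: the nondegeneracy of the parametrization, and the fact that $\mathcal Z$ is embedded rather than merely immersed. Moreover, as the paper's own phrase ``fixed-amplitude case'' signals, the cited statement also minimizes over amplitudes. So the fixed-amplitude minimization is not literally a special case of it. Your argument is self-contained and addresses the fixed-amplitude problem as stated. It also makes the uniformity along $\mathcal Z$ transparent, since every constant is fixed at $\omega_{1,0}$ and then transported by unitaries.

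One step to tighten is the $C^2$ regularity of $(\lambda,\xi)\mapsto\omega_{\lambda,\xi}$ as a map into $\mathcal D^\gamma(\mathbb H^n)$. This does not follow from pointwise bounds on the derivatives: the norm is nonlocal, and a bound by $\varrho^{-\frac{Q-2\gamma}{4}}$ only gives $L^{Q^*}$ control. It is cleanest to argue on the sphere. Under the Cayley correspondence, $\omega_{\lambda,\xi}$ becomes a constant multiple of $|J_{\hat\tau}|^{\frac{Q-2\gamma}{2Q}}$, where $\hat\tau=\mathcal C\circ\delta_\lambda\circ\tau_{\xi^{-1}}\circ\mathcal C^{-1}$ is a CR automorphism of $\mathbb S^{2n+1}$ depending smoothly on $(\lambda,\xi)$. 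This gives a smooth family in $C^\infty(\mathbb S^{2n+1})$, hence in $H^\gamma(\mathbb S^{2n+1})$. Since the correspondence $v\mapsto\rho$ is a linear isomorphism onto $H^\gamma(\mathbb S^{2n+1})$, with $\|v\|^2_{\mathcal D^\gamma(\mathbb H^n)}=2\int_{\mathbb S^{2n+1}}\rho\,\mathcal A_\gamma\rho\,d\zeta$, this is the same as smoothness in $\mathcal D^\gamma(\mathbb H^n)$.
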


	\textbf{Proof of Theorem \ref{main-result-4}}. We argue by contradiction. Suppose that there exists a sequence of positive solutions $u_k \in \mathcal{D}^\gamma(\mathbb{H}^n) \setminus \mathcal{Z}$ such that 
	$$
	\mathcal{L}_{\gamma}u_k = u_k^{\frac{Q+2\gamma}{Q-2\gamma}}
	$$ 
	and 
	$$
	\operatorname{dist}_{\mathcal D^\gamma(\mathbb H^n)}(u_k,\mathcal Z)
	:=\inf_{\omega \in \mathcal{Z}} \Vert{}u_k - \omega\Vert{}_{\mathcal{D}^\gamma(\mathbb H^n)} \to 0
	$$ 
	as $k \to \infty$. By Proposition \ref{lem:infimum-decomposition}, for sufficiently large $k$, there exists a unique $\omega_k:=\omega_{u_k}\in\mathcal Z$ such that
	$$
	\|u_k-\omega_k\|_{\mathcal{D}^\gamma(\mathbb{H}^n)}=\inf_{\omega\in\mathcal{Z}}\|u_k-\omega\|_{\mathcal{D}^\gamma(\mathbb{H}^n)}=\operatorname{dist}_{\mathcal D^\gamma(\mathbb H^n)}(u_k,\mathcal Z).
	$$
	Moreover, setting $v_k:=u_k-\omega_k$ gives $(v_k,\varphi)_{\mathcal D^\gamma(\mathbb H^n)}=0$ for any $\varphi\in T_{\omega_k}\mathcal Z$,
	and
	\begin{equation}\label{eqn:vk-to-0}
		\|v_k\|_{\mathcal D^\gamma(\mathbb H^n)}=\|u_k-\omega_k\|_{\mathcal D^\gamma(\mathbb H^n)}=\operatorname{dist}_{\mathcal D^\gamma(\mathbb H^n)}(u_k,\mathcal Z)\to 0\ \mbox{as}\ k\to\infty.
	\end{equation}
	Since $u_k\notin\mathcal Z$, we have $v_k\neq0$ and $\|v_k\|_{\mathcal D^\gamma(\mathbb{H}^n)}>0$ for every $k$. Define the functional $\mathcal{F}: \mathcal{D}^\gamma(\mathbb{H}^n) \to \mathcal{D}^{-\gamma}(\mathbb{H}^n)$ by
	$$
	\mathcal{F}(u) := \mathcal{L}_{\gamma} u - (u^+)^{\frac{Q+2\gamma}{Q-2\gamma}}.
	$$
	Since $0<\gamma<\frac{Q}{2}$ and $\frac{Q+2\gamma}{Q-2\gamma}-1=\frac{4\gamma}{Q-2\gamma}>0$, we know that the exponent $\frac{Q+2\gamma}{Q-2\gamma} > 1$. Moreover, at a positive bubble $\omega\in\mathcal Z$, it holds $\mathcal F'(\omega)v=\mathcal L_\gamma v-p\omega^{p-1}v=:\mathcal L_\omega v$ (see \cite[Page: 537]{Chen-Wang2017} or \cite[Page: 9]{Guidi-Maalaoui-Martino2018}). Applying Taylor's expansion to $\mathcal{F}$ around $\omega_k$, we obtain that 
	$$
	0 = \mathcal{F}(\omega_k + v_k) = \mathcal{F}(\omega_k) + \mathcal{F}'(\omega_k)v_k + R_{\omega_k}(v_k),
	$$
	and the Taylor remainder
	\begin{equation}\label{eqn:R-expression-1}
		R_{\omega_k}(v_k)=\mathcal F(\omega_k+v_k)-\mathcal F(\omega_k)-\mathcal F'(\omega_k)v_k
			=-\mathcal F'(\omega_k)v_k=- \mathcal{L}_{\omega_k} v_k,
	\end{equation}
	since $\mathcal{F}(u_k) =\mathcal F(\omega_k+v_k)= 0$, $\mathcal{F}(\omega_k) = 0$ and $\mathcal{F}'(\omega_k) v_k= \mathcal{L}_{\omega_k}v_k$.  Furthermore, $R_{\omega_k}(v_k)$ can also be expressed as
	\begin{equation}\label{eqn:R-expression-2}
		\begin{aligned}
			R_{\omega_k}(v_k)&=\mathcal F(\omega_k+v_k)-\mathcal F(\omega_k)-\mathcal F'(\omega_k)v_k\\
			&=\mathcal L_\gamma(\omega_k+v_k)-((\omega_k+v_k)^+)^{\frac{Q+2\gamma}{Q-2\gamma}}-\bigl(\mathcal L_\gamma\omega_k-\omega_k^{\frac{Q+2\gamma}{Q-2\gamma}}\bigr)-\bigl(\mathcal L_\gamma v_k-(Q^*-1)\omega_k^\frac{4\gamma}{Q-2\gamma}v_k\bigr)\\
			&=-((\omega_k+v_k)^+)^{\frac{Q+2\gamma}{Q-2\gamma}}+\omega_k^{\frac{Q+2\gamma}{Q-2\gamma}}+(Q^*-1)\omega_k^\frac{4\gamma}{Q-2\gamma}v_k.
		\end{aligned}
	\end{equation}
	We next estimate $R_{\omega_k}(v_k)$ uniformly with respect to $\omega_k\in\mathcal Z$. By the standard pointwise Taylor inequalities for the function $t\mapsto(t^+)^m$, there exists a constant $C_m>0$ depending only on $m$ such that for every $a\geq0$ and $b\in\mathbb R$,
	\begin{equation}\label{eqn:1-m-2-inequality}
		\left|(a+b)_+^m-a^m-pa^{m-1}b\right|\leq C_m|b|^m
	\end{equation}
	when $1<m\leq2$, see \cite[Lemma 2.3]{Kim-Merle2025} for details. Let $p:=Q^*-1=\frac{Q+2\gamma}{Q-2\gamma}$, $q:=Q^*=\frac{2Q}{Q-2\gamma}$ and $q':=\frac{2Q}{Q+2\gamma}$. If $1<p\leq 2$, using $L^{q'}(\mathbb H^n)\hookrightarrow\mathcal D^{-\gamma}(\mathbb H^n)$, (\ref{eqn:R-expression-2}), (\ref{eqn:1-m-2-inequality}), $pq'=q$ and $\mathcal D^\gamma(\mathbb H^n)\hookrightarrow L^q(\mathbb H^n)$, we obtain that 
	$$
	\begin{aligned}
		\|R_{\omega_k}(v_k)\|_{\mathcal D^{-\gamma}(\mathbb H^n)}&\leq C_p\|R_{\omega_k}(v_k)\|_{L^{q'}(\mathbb H^n)}\\
		&=C_p\|((\omega_k+v_k)^+)^p-\omega_k^p-p\omega_k^{p-1}v_k\|_{L^{q'}(\mathbb H^n)}\\
		&\leq C'_p\||v_k|^p\|_{L^{q'}(\mathbb H^n)}=C'_p\|v_k\|^p_{L^{q}(\mathbb H^n)}\leq C''_p\|v_k\|_{\mathcal D^\gamma(\mathbb H^n)}^{p},
	\end{aligned}
	$$
	where $C_p,C'_p,C''_p$ are positive constants depending only on $p$. Since $1<p\leq 2$, this gives
	\begin{equation}\label{eqn:R-estimate-1-2}
		\|R_{\omega_k}(v_k)\|_{\mathcal D^{-\gamma}(\mathbb H^n)}=o\!\left(\|v_k\|_{\mathcal D^\gamma(\mathbb H^n)}\right)
	\end{equation}
	as $\|v_k\|_{\mathcal D^\gamma}\to0$.
	
	If $p>2$, setting $g(t):=(t^+)^p$, we have 
	$$
	g'(t)=p(t^+)^{p-1}
\ \ 
	\mbox{and} 
	\ \ 
	g''(t)=p(p-1)(t^+)^{p-2}.
	$$
	For any $a\ge0$ and $b\in\mathbb R$, the second-order Taylor formula with an integral remainder gives that
	$$
	\begin{aligned}
		g(a+b)-g(a)-g'(a)b&=b^2\int_0^1(1-\theta)g''(a+\theta b) d\theta\\
		&=p(p-1)b^2
		\int_0^1(1-\theta)((a+\theta b)^+)^{p-2}d\theta.
	\end{aligned}
	$$
	Since
	$
	(a+\theta b)^+\le|a+\theta b|\le a+|b|
	$
	for \(\theta\in[0,1]\), we derive that
	$$
	((a+\theta b)^+)^{p-2}\le (a+|b|)^{p-2}\le \widetilde{C}_p\left(a^{p-2}+|b|^{p-2}\right)
	$$
	and
	$$
	\begin{aligned}
		\left|g(a+b)-g(a)-g'(a)b\right|&\le \widetilde{C}'_p|b|^2\left(a^{p-2}+|b|^{p-2}\right)\int_0^1(1-\theta)d\theta\\
		&\le \widetilde{C}'_p\left(a^{p-2}|b|^2+|b|^p\right),
	\end{aligned}
	$$
	which means that
	\begin{equation}\label{eqn:p-geq-2-inequality}
		\left|((a+b)^+)^p-a^p-pa^{p-1}b\right|\le \widetilde{C}'_p\left(a^{p-2}|b|^2+|b|^p\right).
	\end{equation}
	By H\"older’s inequality with the relation $\frac{1}{q'}=\frac{p-2}{q}+\frac{2}{q}$, we have
	\begin{equation}\label{eqn:holder-result}
		\|\omega_k^{p-2}|v_k|^2\|_{L^{q'}(\mathbb H^n)}\leq\|\omega_k\|_{L^q(\mathbb H^n)}^{p-2}\|v_k\|_{L^q(\mathbb H^n)}^{2}.
	\end{equation}
	Utilizing $L^{q'}(\mathbb H^n)\hookrightarrow\mathcal D^{-\gamma}(\mathbb H^n)$, (\ref{eqn:R-expression-2}), (\ref{eqn:p-geq-2-inequality}), (\ref{eqn:holder-result}), $pq'=q$ and $\mathcal D^\gamma(\mathbb H^n)\hookrightarrow L^q(\mathbb H^n)$, one has
	$$
	\begin{aligned}
		\|R_{\omega_k}(v_k)\|_{\mathcal D^{-\gamma}}&\leq C_p\|R_{\omega_k}(v_k)\|_{L^{q'}(\mathbb H^n)}\\
		&=C_p\|((\omega_k+v_k)^+)^p-\omega_k^p-p\omega_k^{p-1}v_k\|_{L^{q'}(\mathbb H^n)}\\
		&=C_p \widetilde{C}'_p\||\omega_k^{p-2}|v_k|^2+|v_k|^p|\|_{L^{q'}(\mathbb H^n)}\\
		&\leq C_p \widetilde{C}'_p\left(\|\omega_k^{p-2}|v_k|^2\|_{L^{q'}(\mathbb H^n)}+\||v_k|^p\|_{L^{q'}(\mathbb H^n)}\right)\\
		&\leq C_p \widetilde{C}'_p\left(\|\omega_k\|_{L^q(\mathbb H^n)}^{p-2}\|v_k\|_{L^q(\mathbb H^n)}^{2}+\|v_k\|_{L^q(\mathbb H^n)}^{p}\right)	\\
		&\leq \widetilde{C}''_p\left(\|\omega_k\|_{L^q(\mathbb H^n)}^{p-2}\|v_k\|_{\mathcal D^\gamma(\mathbb H^n)}^{2}+\|v_k\|_{\mathcal D^\gamma(\mathbb H^n)}^{p}\right).
	\end{aligned}
	$$
	Meanwhile, it follows from the fact $\|\omega_{\lambda,\xi}\|_{L^q(\mathbb H^n)}=\|\omega_{1,0}\|_{L^q(\mathbb H^n)}$ for every $\omega_{\lambda,\xi}\in\mathcal Z$ that the constant in the above estimate can be chosen independently of the bubble parameters.  Thus,
	\begin{equation}\label{eqn:R-estimate-2-infty}
		\|R_{\omega_k}(v_k)\|_{\mathcal D^{-\gamma}(\mathbb H^n)}\leq
		\widetilde{C}'''_p\left(\|v_k\|_{\mathcal D^\gamma(\mathbb H^n)}^{2}+\|v_k\|_{\mathcal D^\gamma(\mathbb H^n)}^{p}\right)=o\!\left(\|v_k\|_{\mathcal D^\gamma(\mathbb H^n)}\right)
	\end{equation}
	as $\|v_k\|_{\mathcal D^\gamma(\mathbb H^n)}\to0$.
	
	Combining (\ref{eqn:vk-to-0}), (\ref{eqn:R-estimate-1-2}) and (\ref{eqn:R-estimate-2-infty}), we derive that
	$$
	\|R_{\omega_k}(v_k)\|_{\mathcal D^{-\gamma}(\mathbb H^n)}=o\!\left(\|v_k\|_{\mathcal D^\gamma}\right)
	$$
	as $k\to\infty$, uniformly for $\omega_k\in\mathcal Z$. Since $v_k\in(T_{\omega_k}\mathcal Z)^\perp$, Lemma \ref{lem:fredholm-operator} and $R_{\omega_k}(v_k)=- \mathcal{L}_{\omega_k} v_k$ in (\ref{eqn:R-expression-1}) imply that there exists a constant $C_0>0$ independent of $k$ such that
	$$
	\begin{aligned}
		C_0\|v_k\|_{\mathcal D^\gamma(\mathbb H^n)}&\leq\|\mathcal L_{\omega_k}v_k\|_{\mathcal D^{-\gamma}(\mathbb H^n)}
		=\|R_{\omega_k}(v_k)\|_{\mathcal D^{-\gamma}(\mathbb H^n)}\\
		&\leq
		\begin{cases}
			C''_p\|v_k\|_{\mathcal D^\gamma(\mathbb H^n)}^{p}, & 1<p\le2;\\
			\widetilde{C}'''_p\left(\|v_k\|_{\mathcal D^\gamma(\mathbb H^n)}^{2}+\|v_k\|_{\mathcal D^\gamma(\mathbb H^n)}^{p}\right), & p>2.
		\end{cases}\\
	\end{aligned}
	$$
	Because $v_k\neq0$ and $\|v_k\|_{\mathcal D^\gamma(\mathbb{H}^n)}>0$ for every $k$, dividing both sides of the above inequality by $\|v_k\|_{\mathcal D^\gamma(\mathbb{H}^n)}$ yields 
	$$
	\begin{aligned}
		C_0&\leq
		\begin{cases}
			C''_p\|v_k\|_{\mathcal D^\gamma(\mathbb H^n)}^{p-1}, & 1<p\le2;\\
			\widetilde{C}'''_p\left(\|v_k\|_{\mathcal D^\gamma(\mathbb H^n)}+\|v_k\|_{\mathcal D^\gamma(\mathbb H^n)}^{p-1}\right), & p>2\\
		\end{cases}\\
		&=o(1).
	\end{aligned}
	$$
	Letting $k\to \infty$ and using ($\ref{eqn:vk-to-0}$), we derive that $C_0\leq0$, which contradicts $C_0>0$. Therefore, such a sequence cannot exist. Consequently, there exists $\epsilon>0$ such that any positive solution $u$ of (\ref{main-problem}) satisfying
	$$
	\operatorname{dist}_{\mathcal D^\gamma(\mathbb H^n)}(u,\mathcal Z)<\epsilon
	$$
	must belong to $\mathcal Z$.
	\qed

	\section{Morse Index and Sharp Linearized Estimates}\label{proof of main result-3}
	In this section, we compute the Morse index of the standard positive bubble $\omega_{1,0}$. Using the Cayley correspondence established in Proposition \ref{prop:Cayley-correspondence}, we transfer the quadratic
	form $\mathcal Q_{\omega_{1,0}}$ from the Heisenberg group $\mathbb H^n$ to the CR sphere $\mathbb S^{2n+1}$. The Morse index can then be determined from the spectrum of the intertwining operator $A_\gamma$.
	
	\begin{lemma}\label{lem:4-1}
		For any $f \in \mathcal{D}^\gamma(\mathbb{H}^n)$, let $\tilde{\rho} \in H^\gamma(\mathbb{S}^{2n+1})$ be defined by 
		$$
		\tilde{\rho}(\zeta) := f(\mathcal{C}^{-1}(\zeta)) \Big(\frac{|J_{\mathcal{C}^{-1}}(\zeta)|}{2}\Big)^{\frac{Q-2\gamma}{2Q}},
		$$
		which equivalently yields the inverse relation
		$$
		f(\eta) = (2|J_{\mathcal{C}}(\eta)|)^{\frac{Q-2\gamma}{2Q}} \tilde{\rho}(\mathcal{C}(\eta)).
		$$
		Then,
		$$
		\mathcal{Q}_{\omega_{1,0}}(f, f)  = 2\int_{\mathbb{S}^{2n+1}} \tilde{\rho} (\mathcal{A}_\gamma - \lambda_{1,0}(\gamma)) \tilde{\rho}  d\zeta,
		$$
		where $\mathcal{Q}_{\omega_{1,0}}(f, f)$ is given in (\ref{eqn:Q-w-10-define}), and $\lambda_{1,0}(\gamma)$ is the eigenvalue of $\mathcal{A}_\gamma$ corresponding to the space $\mathcal{H}_{1,0} \oplus \mathcal{H}_{0,1}$. Moreover, the Morse index of $\mathcal L_{\omega_{1,0}}$ on $\mathcal{D}^\gamma(\mathbb{H}^n)$ equals the number of strictly negative eigenvalues of 
		$$
		\mathcal{P}_\gamma = \mathcal{A}_\gamma - \lambda_{1,0}(\gamma)I 
		$$
		on $\mathbb{S}^{2n+1}$,
		where $I$ is the identity operator.
	\end{lemma}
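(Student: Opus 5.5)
The plan is to transfer both terms of $\mathcal Q_{\omega_{1,0}}(f,f)$ to the sphere by the Cayley transform, exactly as in the proof of Theorem \ref{main-result-1}, and then read off the Morse index from the explicit spectrum \eqref{eqn:lambda-i-j-expression}.

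For the kinetic term, \eqref{eqn:relation-L-A} applied to $\tilde\rho$ gives
\[
\mathcal L_\gamma f=(2|J_{\mathcal C}|)^{\frac{Q+2\gamma}{2Q}}(\mathcal A_\gamma\tilde\rho)\circ\mathcal C .
\]
Multiplying by $f=(2|J_{\mathcal C}|)^{\frac{Q-2\gamma}{2Q}}\tilde\rho\circ\mathcal C$, the weights combine to $2|J_{\mathcal C}|$. The change of variables \eqref{eqn:int-F-FC} then yields
\[
\int_{\mathbb H^n}f\mathcal L_\gamma f\,d\eta=2\int_{\mathbb S^{2n+1}}\tilde\rho\,\mathcal A_\gamma\tilde\rho\,d\zeta .
\]
For the potential term, I reuse the computation after \eqref{eqn:J-C-omega-10-relation}, which shows
\[
(Q^*-1)\omega_{1,0}^{\frac{4\gamma}{Q-2\gamma}}\,(2|J_{\mathcal C}|)^{\frac{Q-2\gamma}{2Q}}=\lambda_{1,0}(\gamma)\,(2|J_{\mathcal C}|)^{\frac{Q+2\gamma}{2Q}} .
\]
Hence $(Q^*-1)\omega_{1,0}^{\frac{4\gamma}{Q-2\gamma}}f^2=\lambda_{1,0}(\gamma)\,2|J_{\mathcal C}|\,(\tilde\rho\circ\mathcal C)^2$, whose integral is $2\lambda_{1,0}(\gamma)\int_{\mathbb S^{2n+1}}\tilde\rho^2\,d\zeta$. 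Subtracting the two terms gives the identity in the lemma. It should first be proved for $f\in C_c^\infty$ (or for smooth $\tilde\rho$) and then extended by density.

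The density step rests on the map $f\mapsto\tilde\rho$ being an isometry up to the factor $\sqrt2$ from $\mathcal D^\gamma(\mathbb H^n)$ onto $H^\gamma(\mathbb S^{2n+1})$, with the latter normed by $(\cdot,\cdot)_\gamma$. The kinetic identity gives this on a dense class. The $L^{Q^*}$ identity \eqref{eqn:mathcalS-Hn-norm-eq}, together with the Sobolev inequalities on both sides, gives well-definedness and closedness. Surjectivity follows as in Proposition \ref{prop:Cayley-correspondence}: the point $(0,\dots,0,-1)$ has zero capacity for $\gamma<Q/2$, so functions vanishing near it are dense. \emph{This is the main obstacle}, because the zero-capacity argument is standard only for $\gamma<1$. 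For larger $\gamma$ I would instead verify directly that $C^\infty(\mathbb S^{2n+1})$ pulls back into $\mathcal D^\gamma(\mathbb H^n)$ with matching norms. The decay $(2|J_{\mathcal C}|)^{\frac{Q-2\gamma}{2Q}}\sim|\eta|^{2\gamma-Q}$ puts these pullbacks in $L^{Q^*}$, and their $\mathcal L_\gamma$-image is $(2|J_{\mathcal C}|)^{\frac{Q+2\gamma}{2Q}}(\mathcal A_\gamma\tilde\rho)\circ\mathcal C$, which lies in $L^{q'}$. Finally, $C^\infty(\mathbb S^{2n+1})$ is dense in $H^\gamma(\mathbb S^{2n+1})$ by the spectral definition.

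Once the isometric correspondence holds, $\mathcal Q_{\omega_{1,0}}$ is unitarily equivalent, up to the factor $2$, to the form of $\mathcal P_\gamma$ on $H^\gamma(\mathbb S^{2n+1})$. The maximal dimension of a negative subspace is invariant under such equivalence. For $\mathcal P_\gamma$, which is diagonal in $\bigoplus\mathcal H_{j,k}$ with eigenvalues $\lambda_{j,k}(\gamma)-\lambda_{1,0}(\gamma)$, a min-max argument shows this dimension equals the number of negative eigenvalues counted with multiplicity. The negative subspace is spanned by the eigenspaces with negative eigenvalue; any larger subspace would meet the nonnegative spectral subspace, on which the form is $\ge0$. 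This gives the final claim of the lemma.
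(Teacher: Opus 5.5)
Your proposal is correct and follows essentially the same route as the paper. Both terms of $\mathcal Q_{\omega_{1,0}}$ are transferred to $\mathbb S^{2n+1}$ via the Cayley intertwining relation, with the potential rewritten as $(Q^*-1)\omega_{1,0}^{\frac{4\gamma}{Q-2\gamma}}=\lambda_{1,0}(\gamma)(2|J_{\mathcal C}|)^{\frac{2\gamma}{Q}}$, and the Morse index is read off through the sign-preserving isomorphism $f\mapsto\tilde\rho$. Your density/surjectivity discussion and your min--max step, which identifies the maximal dimension of a negative subspace with the number of negative eigenvalues, fill in points the paper only asserts: it deduces the isomorphism directly from the $L^{Q^*}$ and energy identities, and defines $\operatorname{ind}^-(\mathcal P_\gamma)$ as a maximal negative-subspace dimension without further argument.
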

	
	\begin{proof}
		From \eqref{eqn:L-gamma-g-express}, we know that the fractional sub-Laplacian $\mathcal{L}_\gamma$ and the operator $\mathcal{A}_\gamma$ satisfy the following relation
		$$
		\mathcal{L}_\gamma f(\eta) = (2|J_{\mathcal{C}}(\eta)|)^{\frac{Q+2\gamma}{2Q}} (\mathcal{A}_\gamma \tilde{\rho})(\mathcal{C}(\eta)).
		$$
		Multiplying this equation by $f(\eta) = (2|J_{\mathcal{C}}(\eta)|)^{\frac{Q-2\gamma}{2Q}} \tilde{\rho}(\mathcal{C}(\eta))$, integrating over $\mathbb{H}^n$, and using \eqref{eqn:int-F-FC}, we deduce that the first term in the quadratic form $\mathcal{Q}_{\omega_{1,0}}(f, f)$ is
		\begin{equation}\label{eqn:int-f-mathcalL-f}
			\begin{aligned}
				\int_{\mathbb{H}^n} f \mathcal{L}_\gamma f  d\eta 
				&= \int_{\mathbb{H}^n} \left[ (2|J_{\mathcal{C}}(\eta)|)^{\frac{Q-2\gamma}{2Q}} \tilde{\rho}(\mathcal{C}(\eta)) \right] \cdot \left[ (2|J_{\mathcal{C}}(\eta)|)^{\frac{Q+2\gamma}{2Q}} (\mathcal{A}_\gamma \tilde{\rho})(\mathcal{C}(\eta)) \right] d\eta \\
				&= \int_{\mathbb{H}^n} \tilde{\rho}(\mathcal{C}(\eta)) (\mathcal{A}_\gamma \tilde{\rho})(\mathcal{C}(\eta)) (2|J_{\mathcal{C}}(\eta)|) d\eta \\
				&= 2\int_{\mathbb{H}^n} (\tilde{\rho} \cdot (\mathcal{A}_\gamma \tilde{\rho}))(\mathcal{C}(\eta)) |J_{\mathcal{C}}(\eta)| d\eta \\
				&= 2\int_{\mathbb{S}^{2n+1}} \tilde{\rho} \mathcal{A}_\gamma \tilde{\rho}  d\zeta.
			\end{aligned}
		\end{equation}
		For the second integral of $\mathcal{Q}_{\omega_{1,0}}(f, f)$, utilizing the explicit expression for $\omega_{1,0}$
		$$
		\omega_{1,0}(\eta) = \frac{C(n,\gamma)}{((1+|z|^2)^2 + t^2)^\frac{Q-2\gamma}{4}} = \left[ \frac{\Gamma(\frac{Q+2\gamma}{4})}{\Gamma(\frac{Q-2\gamma}{4})} \right]^\frac{Q-2\gamma}{2\gamma}\frac{2^\frac{Q-2\gamma}{2}}{((1+|z|^2)^2 + t^2)^\frac{Q-2\gamma}{4}},
		$$
		and the Jacobian determinant
		$$
		|J_{\mathcal{C}}(\eta)| = \frac{2^{2n+1}}{\left( (1 + |z|^2)^2 + t^2 \right)^{n+1}} = \frac{2^{Q-1}}{\left( (1 + |z|^2)^2 + t^2 \right)^\frac{Q}{2}},
		$$
		we compute that
		$$
		\begin{aligned}
			(Q^\ast - 1)\omega_{1,0}^{\frac{4\gamma}{Q-2\gamma}}(\eta) &= (Q^\ast - 1)  \left[ \frac{\Gamma(\frac{Q+2\gamma}{4})}{\Gamma(\frac{Q-2\gamma}{4})} \right]^{\frac{Q-2\gamma}{2\gamma}\cdot{\frac{4\gamma}{Q-2\gamma}}}	
			\frac{2^{\frac{Q-2\gamma}{2}\cdot {\frac{4\gamma}{Q-2\gamma}}}}{((1+|z|^2)^2 + t^2)^{\frac{Q-2\gamma}{4}\cdot {\frac{4\gamma}{Q-2\gamma}}}}\\
			&= (Q^\ast - 1)  \left[ \frac{\Gamma(\frac{Q+2\gamma}{4})}{\Gamma(\frac{Q-2\gamma}{4})} \right]^{2}	
			\frac{2^{2\gamma}}{((1+|z|^2)^2 + t^2)^{\gamma}}\\
			&= (Q^\ast - 1)  \left[ \frac{\Gamma(\frac{Q+2\gamma}{4})}{\Gamma(\frac{Q-2\gamma}{4})} \right]^{2}	\left(\frac{2^Q}{((1+|z|^2)^2+t^2)^\frac{Q}{2}}\right)^\frac{2\gamma}{Q}\\
			&= (Q^\ast - 1) \left[ \frac{\Gamma(\frac{Q+2\gamma}{4})}{\Gamma(\frac{Q-2\gamma}{4})} \right]^2 (2|J_{\mathcal{C}}(\eta)|)^{\frac{2\gamma}{Q}} \\
			&= \lambda_{1,0}(\gamma)(2|J_{\mathcal{C}}(\eta)|)^{\frac{2\gamma}{Q}}.
		\end{aligned}
		$$
		Hence, 
		$$
		\begin{aligned}
			\int_{\mathbb{H}^n} (Q^\ast - 1)\omega_{1,0}^{\frac{4\gamma}{Q-2\gamma}} f^2d\eta
			&= \int_{\mathbb{H}^n} (Q^\ast - 1)\omega_{1,0}^{\frac{4\gamma}{Q-2\gamma}}  \left[ (2|J_{\mathcal{C}}(\eta)|)^{\frac{Q-2\gamma}{2Q}} \tilde{\rho}(\mathcal{C}(\eta)) \right]^2 d\eta \\
			&= \int_{\mathbb{H}^n} (Q^\ast - 1)\omega_{1,0}^{\frac{4\gamma}{Q-2\gamma}}  (2|J_{\mathcal{C}}(\eta)|)^{\frac{Q-2\gamma}{Q}} \tilde{\rho}^2(\mathcal{C}(\eta)) d\eta \\
			&= \int_{\mathbb{H}^n} \lambda_{1,0}(\gamma)(2|J_{\mathcal{C}}(\eta)|)^{\frac{2\gamma}{Q}} (2|J_{\mathcal{C}}(\eta)|)^{\frac{Q-2\gamma}{Q}} \tilde{\rho}^2(\mathcal{C}(\eta)) d\eta \\
			&= \lambda_{1,0}(\gamma) \int_{\mathbb{H}^n} \tilde{\rho}^2(\mathcal{C}(\eta)) (2|J_{\mathcal{C}}(\eta)|) \, d\eta \\
			&= 2\lambda_{1,0}(\gamma) \int_{\mathbb{S}^{2n+1}} \tilde{\rho}^2 d\zeta.
		\end{aligned}
		$$
		Together this with \eqref{eqn:int-f-mathcalL-f}, we arrive at
		\begin{equation}\label{eqn:quadratic-form-sphere}
			\begin{aligned}
				\mathcal{Q}_{\omega_{1,0}}(f,f) &= \int_{\mathbb{H}^n} f \mathcal{L}_\gamma f d\eta - \int_{\mathbb{H}^n} (Q^* - 1)\omega_{1,0}^{\frac{4\gamma}{Q-2\gamma}} f^2 d\eta \\
				&= 2 \int_{\mathbb{S}^{2n+1}} \tilde{\rho} \mathcal{A}_\gamma \tilde{\rho} \, d\zeta - 2 \lambda_{1,0}(\gamma) \int_{\mathbb{S}^{2n+1}} \tilde{\rho}^2 \, d\zeta \\
				&= 2 \int_{\mathbb{S}^{2n+1}} \tilde{\rho} \big( \mathcal{A}_\gamma \tilde{\rho} - \lambda_{1,0}(\gamma)\tilde{\rho} \big) \, d\zeta \\
				&= 2 \int_{\mathbb{S}^{2n+1}} \tilde{\rho} \big( \mathcal{A}_\gamma - \lambda_{1,0}(\gamma) \big) \tilde{\rho} \, d\zeta.
			\end{aligned}
		\end{equation}
		Combining the linearity and invertibility of the mapping $f \mapsto \tilde{\rho}$ with (\ref{eqn:mathcalS-Hn-norm-eq}) and (\ref{eqn:int-f-mathcalL-f}), we deduce that it is a linear isomorphism from $D^\gamma(\mathbb H^n)$ onto $H^\gamma(\mathbb S^{2n+1})$. Since the factor $2$ in (\ref{eqn:quadratic-form-sphere}) is strictly positive, this correspondence preserves the sign of the quadratic
		form. Therefore, maximal negative subspaces of $\mathcal Q_{\omega_{1,0}}$ correspond bijectively to maximal negative subspaces of the quadratic form associated with $\mathcal P_\gamma$.
		Consequently,
		$$
		\begin{aligned}
			m(\omega_{1,0})=\operatorname{ind}^{-}(\mathcal P_\gamma),
		\end{aligned}
		$$
		where
		$$
		\operatorname{ind}^{-}(P_\gamma):=
		\max\left\{\dim E:
		\begin{array}{l}
			E\subset H^\gamma(\mathbb S^{2n+1})\text{ is a linear subspace},\ \int_{\mathbb S^{2n+1}}
			U \mathcal P_\gamma Ud\zeta<0\\[1mm]
			\text{for every }U\in E\setminus\{0\}.
		\end{array}
		\right\}.
		$$ The proof is now complete.
	\end{proof}  
	
	\begin{lemma}\label{lem:4-2}
		Let the eigenvalues of the intertwining operator $\mathcal{A}_\gamma$ on the spherical harmonic space $\mathcal{H}_{j,k}$ be given by $\lambda_{j,k}(\gamma) = \lambda_j(\gamma)\lambda_k(\gamma)$, where
		$$
		\lambda_j(\gamma) = \frac{\Gamma(\frac{Q+2\gamma}{4}+j)}{\Gamma(\frac{Q-2\gamma}{4}+j)}, \quad j \in \{0, 1, 2, \dots\}.
		$$
		Then, the sequence $\{\lambda_j(\gamma)\}_{j=0}^\infty$ is strictly increasing. Consequently, 
		$$
		\lambda_{0,0}(\gamma)<\lambda_{1,0}(\gamma),
		$$
		and
		\begin{equation}\label{eqn:lambda-jk-10-geq}
			\lambda_{j,k}(\gamma)\geq\lambda_{1,0}(\gamma)\ \ \text{for}\ \ (j,k)\neq(0,0).
		\end{equation}
		Moreover, the equality holds in (\ref{eqn:lambda-jk-10-geq}) if and only if
		$$
		(j,k)=(1,0)\ \text{or}\ \ (j,k)=(0,1).
		$$
	\end{lemma}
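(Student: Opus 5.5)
The plan rests on the functional equation of the Gamma function. Set $a:=\frac{Q-2\gamma}{4}$ and $b:=\frac{Q+2\gamma}{4}$, so that $b-a=\frac{\gamma}{1}\cdot\frac12=\frac{\gamma}{2}>0$, $a>0$ because $0<\gamma<\frac Q2$, and $\lambda_j(\gamma)=\frac{\Gamma(b+j)}{\Gamma(a+j)}$. Using $\Gamma(x+1)=x\Gamma(x)$ we get
\[
\frac{\lambda_{j+1}(\gamma)}{\lambda_j(\gamma)}=\frac{b+j}{a+j}>1 \qquad\text{for every } j\geq 0,
\]
since $b>a>0$. Every $\lambda_j(\gamma)$ is positive because $a+j>0$ and $b+j>0$. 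Hence the sequence $\{\lambda_j(\gamma)\}_{j\ge0}$ is strictly increasing and positive. This is the same computation already used in the proof of Theorem \ref{main-result-1} to identify $\lambda_{1,0}(\gamma)=\frac{b}{a}\lambda_0(\gamma)^2$.

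Next, we deduce the comparison of the products $\lambda_{j,k}=\lambda_j\lambda_k$. The inequality $\lambda_{0,0}=\lambda_0^2<\lambda_1\lambda_0=\lambda_{1,0}$ is immediate from $\lambda_0<\lambda_1$ and $\lambda_0>0$. For $(j,k)\neq(0,0)$, we may assume by the symmetry $\lambda_{j,k}=\lambda_{k,j}$ that $j\ge1$. Monotonicity and positivity then give
\[
\lambda_{j,k}=\lambda_j\lambda_k\ge\lambda_1\lambda_0=\lambda_{1,0},
\]
since $\lambda_j\ge\lambda_1>0$ and $\lambda_k\ge\lambda_0>0$.

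For the equality case, the product of two positive factors, each bounded below, equals the product of the lower bounds only if both inequalities are equalities. That is, we need $\lambda_j=\lambda_1$ and $\lambda_k=\lambda_0$. Strict monotonicity forces $j=1$ and $k=0$, and undoing the symmetry reduction gives $(0,1)$ as the other possibility. Conversely, $\lambda_{1,0}=\lambda_{0,1}$ holds trivially.

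There is no real obstacle here. The only point needing care is the positivity of $a=\frac{Q-2\gamma}{4}$, which is exactly where the hypothesis $\gamma<\frac Q2$ enters. Without it, some Gamma values in the denominator could be negative or singular, and the multiplication of inequalities would not be justified. The lemma therefore holds uniformly on the full admissible range $0<\gamma<\frac Q2$, including the integer cases.
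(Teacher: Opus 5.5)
Your proof is correct, but it gets monotonicity by a different route from the paper. The paper works with the continuous interpolant $\phi(x)=\Gamma(x+\gamma)/\Gamma(x)$ on $(0,\infty)$. It computes $\frac{d}{dx}\ln\phi(x)=\psi(x+\gamma)-\psi(x)$ and uses the trigamma series $\psi'(x)=\sum_{k\ge0}(x+k)^{-2}>0$ to conclude that $\phi$ is strictly increasing. It then samples $\phi$ at $x_j=\frac{Q-2\gamma}{4}+j$.

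You use only the recurrence $\Gamma(x+1)=x\Gamma(x)$ to get the exact ratio $\lambda_{j+1}(\gamma)/\lambda_j(\gamma)=(b+j)/(a+j)>1$. This needs no digamma facts. It is also the same identity the paper itself uses later to obtain $\lambda_1(\gamma)/\lambda_0(\gamma)=\frac{Q+2\gamma}{Q-2\gamma}$. The paper's argument proves something stronger, namely that $\phi$ increases on the whole half-line, but only integer shifts are needed here.

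Two further points favour your write-up:
\begin{itemize}
\item You derive $\lambda_j(\gamma)>0$ directly from $a+j>0$ and $b+j>0$, whereas the paper cites Branson et al.\ for it.
\item Your equality argument (both positive factors must attain their lower bounds $\lambda_1,\lambda_0$, then strict monotonicity forces $(j,k)=(1,0)$ up to symmetry) is explicit. The paper simply asserts this step from positivity and monotonicity.
\end{itemize}

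One small slip: $b-a=\frac{Q+2\gamma}{4}-\frac{Q-2\gamma}{4}=\gamma$, not $\frac{\gamma}{2}$. This is harmless, since your argument only uses $b>a>0$.
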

	
	\begin{proof}
		Consider the continuous function $\phi(x) := \frac{\Gamma(x+\gamma)}{\Gamma(x)}$ for $x > 0$. To determine its monotonicity, we compute the logarithmic derivative
		$$
		\begin{aligned}
			\frac{d}{dx} \ln \phi(x) &= \frac{d}{dx} \Big(\ln \Gamma(x+\gamma) - \ln \Gamma(x)\Big) \\
			&=  \psi(x+\gamma) - \psi(x),
		\end{aligned}
		$$
		where $\psi(x) := \frac{\Gamma'(x)}{\Gamma(x)}$ is the digamma function, as defined in \cite[(6.3.1)]{Abramowitz-Stegun-1948}. Recall the well-known series expansion for the trigamma function $\psi'(x)$ given by \cite[(6.4.10)]{Abramowitz-Stegun-1948}
		$$
		\psi'(x) = \sum_{k=0}^\infty (x+k)^{-2}.
		$$
		Because this series is strictly positive for all $x > 0$, the digamma function $\psi(x)$ is strictly increasing. Given $\gamma > 0$, it follows that $\psi(x+\gamma) - \psi(x) > 0$, rendering $\phi(x)$ is strictly increasing with respect to $x$. 
		
		Let $x_j := \frac{Q-2\gamma}{4} + j$, clearly,  $x_j > 0$ for all $j \ge 0$. As $x_j$ strictly increases with the index $j$, we deduce the strict inequalities
		$$
		\phi(x_0) <  \phi(x_1) <  \phi(x_2) <\cdots,
		$$
		which equivalently yields
		$$
		\lambda_0(\gamma) < \lambda_1(\gamma) < \lambda_2(\gamma) < \cdots.
		$$
		Since $\lambda_{j,k}(\gamma) = \lambda_j(\gamma)\lambda_k(\gamma)$ and each $\lambda_j(\gamma)>0$ (see Branson et al. \cite[Page 17]{Branson-Fontana-Morpurgo2013}), the smallest eigenvalue is uniquely taken at  $j = k = 0$, i.e. $\lambda_{0,0}(\gamma) < \lambda_{1,0}(\gamma) = \lambda_{0,1}(\gamma)$. Furthermore, the strict positivity and monotonicity of $\{\lambda_j(\gamma)\}_{j=0}^\infty$ guarantee that any other pair of indices $(j,k) \neq (0,0)$ results in $\lambda_{j,k}(\gamma) \ge \lambda_{1,0}(\gamma)$. Thus, $\lambda_{0,0}(\gamma)$ is indeed the unique eigenvalue strictly smaller than $\lambda_{1,0}(\gamma)$.
	\end{proof}

	\textbf{Proof of Theorem \ref{main-result-3}}. By Lemma \ref{lem:4-1}, evaluating the Morse index of $\omega_{1,0}$ on $\mathcal{D}^\gamma(\mathbb{H}^n)$ is completely equivalent to finding the number of strictly negative eigenvalues (counted with their geometric multiplicity) of the shifted operator 
	$$
	\mathcal{P}_\gamma = \mathcal{A}_\gamma - \lambda_{1,0}(\gamma)I
	$$ 
	on the fractional Sobolev space $H^\gamma(\mathbb{S}^{2n+1})$, where $I$ is the identity operator. The spectral decomposition of $L^2(\mathbb{S}^{2n+1})$ into the orthogonal direct sum of spherical harmonic spaces (namely $L^2(\mathbb{S}^{2n+1}) = \bigoplus_{j,k=0}^\infty \mathcal{H}_{j,k}$) completely diagonalizes the intertwining operator $\mathcal{A}_\gamma$. This means that any element $\rho_{j,k} \in \mathcal{H}_{j,k}$ is an eigenfunction of $\mathcal{A}_\gamma$, satisfying the eigenvalue equation $\mathcal{A}_\gamma \rho_{j,k} = \lambda_{j,k}(\gamma) \rho_{j,k}$. Using the definition of the shifted operator $\mathcal{P}_\gamma$, its action on $\rho_{j,k}$ is simply transformed into scalar multiplication
	$$
		\mathcal{P}_\gamma \rho_{j,k} = \mathcal{A}_\gamma \rho_{j,k} - \lambda_{1,0}(\gamma) I \rho_{j,k} 
		= \lambda_{j,k}(\gamma)\rho_{j,k} - \lambda_{1,0}(\gamma)\rho_{j,k} = (\lambda_{j,k}(\gamma) - \lambda_{1,0}(\gamma))\rho_{j,k}.
	$$
	From Lemma \ref{lem:4-2}, the value of $(\lambda_{j,k}(\gamma) - \lambda_{1,0}(\gamma))$ has three possible signs:
	
	$(i)$ strictly negative if and only if $(j,k) = (0,0)$;
	
	$(ii)$ exactly zero when $(j,k)=(1,0)$ or $(0,1)$;
	
	$(iii)$ strictly positive for all other pairs of $(j,k)$.
	
	Therefore, the negative eigenspace of $\mathcal{P}_\gamma$ is exactly $\mathcal{H}_{0,0}$. Since $\mathcal{H}_{0,0}$ consists of degree-zero polynomials, it contains only constant functions. Using (\ref{eqn:mathcal-H-m-ij-dim}), we can calculate its dimension
	$$
	\dim(\mathcal{H}_{0,0}) = m_{0,0} = \frac{(0+n-1)!(0+n-1)!(0+0+n)}{n!(n-1)!0!0!} = 1.
	$$
	By Lemma \ref{lem:4-1}, we obtain that $m(\omega_{1,0}) = 1$.  More precisely, the unique negative eigenvalue of $\mathcal P_\gamma$ is
	$$
	\lambda_{0,0}(\gamma)-\lambda_{1,0}(\gamma)<0,
	$$
	and its eigenspace is
	$$
	\mathcal H_{0,0}=\operatorname{span}\{1\}.
	$$
	Taking $\rho_{0,0}\equiv1$, its pullback under the Cayley transform is
	$$
	f_{0,0}(\eta) = (2|J_{\mathcal{C}}(\eta)|)^{\frac{Q-2\gamma}{2Q}} \cdot 1 = \left( \frac{2}{\sqrt{(1+|z|^2)^2 + t^2}} \right)^\frac{Q-2\gamma}{2}>0
	$$
	for any $\eta=(z,t)\in \mathbb H^n$. Furthermore,
	$$
	\mathcal Q_{\omega_{1,0}}(f_{0,0},f_{0,0})=2\bigl(\lambda_{0,0}(\gamma)-\lambda_{1,0}(\gamma)\bigr)\int_{S^{2n+1}}1\,d\zeta<0.
	$$
	Hence $\operatorname{span}\{f_{0,0}\}$ is a one-dimensional maximal negative subspace of $\mathcal Q_{\omega_{1,0}}$. This completes the proof of Theorem \ref{main-result-3}.
	\qed
	
	\textbf{Proof of Theorem \ref{main-result-5}}. It follows from (\ref{eqn:f-lambda-xi-define}) that for $f\in D^\gamma(\mathbb H^n)$, 
	$$
	f_{\lambda,\xi}(\eta)=\lambda^{\frac{Q-2\gamma}{2}}f\bigl(\delta_\lambda(\xi^{-1}\eta)\bigr),
	$$
	and the map $f\mapsto f_{\lambda,\xi}$ is a linear bijection of $D^\gamma(\mathbb H^n)$. By the translation invariance and homogeneity of $\mathcal L_\gamma$, we have
	$$
	\mathcal L_\gamma f_{\lambda,\xi}(\eta)=\lambda^{\frac{Q+2\gamma}{2}}(\mathcal L_\gamma f)\bigl(\delta_\lambda(\xi^{-1}\eta)\bigr).
	$$
	Using the change of variables
	$$
	\tilde{\eta}:=\delta_\lambda(\xi^{-1}\eta)
	$$
	and the fact 
	$$
	\omega_{\lambda,\xi}(\eta)=\lambda^{\frac{Q-2\gamma}{2}}\omega_{1,0}\bigl(\delta_\lambda(\xi^{-1}\eta)\bigr),
	$$
	we obtain that
	$$
	\begin{aligned}
		\mathcal Q_{\omega_{\lambda,\xi}}(f_{\lambda,\xi},f_{\lambda,\xi})&=
		\int_{\mathbb H^n}	\left(	f_{\lambda,\xi}\mathcal L_\gamma f_{\lambda,\xi}-(Q^*-1)\omega_{\lambda,\xi}^{\frac{4\gamma}{Q-2\gamma}}f_{\lambda,\xi}^2\right)d\eta  \\
		&=	\int_{\mathbb H^n}	\left(f\mathcal L_\gamma f	-(Q^*-1)\omega_{1,0}^{\frac{4\gamma}{Q-2\gamma}}
		f^2\right)d\tilde{\eta}  =\mathcal Q_{\omega_{1,0}}(f,f).
	\end{aligned}
	$$
	Therefore, every subspace on which $\mathcal Q_{\omega_{1,0}}$ is strictly negative is mapped to a subspace of the same dimension on which $\mathcal Q_{\omega_{\lambda,\xi}}$ is strictly negative. Applying the inverse transformation gives the converse. Hence,
	$$
	m(\omega_{\lambda,\xi})=m(\omega_{1,0}),
	$$
	which completes the proof for Theorem \ref{main-result-5}.
	\qed
	
	The spectral analysis underlying the Morse index computation also provides a basis for establishing a sharp estimate for the linearized operator. To this end, we first characterize the point spectrum of \(I-\mathcal K_{\omega_{1,0}}\) through the Cayley transform. This characterization will then be used to determine the optimal constant in Theorem \(\ref{main-result-6}\).
	\begin{lemma}\label{lem:characterize-point-spectrum}
		For any \(v\in\mathcal D^\gamma(\mathbb H^n)\), let \(\rho\in H^\gamma(\mathbb S^{2n+1})\) denote the spherical counterpart of \(v\) under the Cayley transform such that
		\[
		v=(2|J_{\mathcal C}|)^{\frac{Q-2\gamma}{2Q}}(\rho\circ\mathcal C).
		\]
		Then
		\[
		(I-\mathcal K_{\omega_{1,0}})v=(2|J_{\mathcal C}|)^{\frac{Q-2\gamma}{2Q}}
		\left(\left(I-\lambda_{1,0}(\gamma)\mathcal A_\gamma^{-1}\right)\rho\right)\circ\mathcal C,
		\]
		where \(\mathcal K_{\omega_{1,0}}\) is the operator defined in Lemma \(\ref{lem:fredholm-operator}\).
		Moreover, the point spectrum of \(I-\mathcal K_{\omega_{1,0}}\) is given by
		\[
		\sigma_p(I-\mathcal K_{\omega_{1,0}})
		=\left\{\mu_{j,k}(\gamma):j,k\geq0\right\},
		\]
		where \(\sigma_p\) denotes the point spectrum and
		\begin{equation}\label{eqn:mu-jk-defn}
			\mu_{j,k}(\gamma):=1-\frac{\lambda_{1,0}(\gamma)}{\lambda_{j,k}(\gamma)}.
		\end{equation}
	\end{lemma}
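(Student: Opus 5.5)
The plan is to compute $\mathcal K_{\omega_{1,0}}v$ directly through the Cayley intertwining relation, and then read off the point spectrum from the spherical harmonic decomposition.

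\textbf{Step 1: conjugating $\mathcal K_{\omega_{1,0}}$ to the sphere.} Set $w:=\mathcal K_{\omega_{1,0}}v\in\mathcal D^\gamma(\mathbb H^n)$. By (\ref{eqn:mathcal-L-Ku-Vu}), $w$ is the unique solution of $\mathcal L_\gamma w=V_{\omega_{1,0}}v$. In the proof of Lemma \ref{lem:4-1} we computed
\[
V_{\omega_{1,0}}=(Q^*-1)\omega_{1,0}^{\frac{4\gamma}{Q-2\gamma}}=\lambda_{1,0}(\gamma)(2|J_{\mathcal C}|)^{\frac{2\gamma}{Q}}.
\]
Hence
\[
V_{\omega_{1,0}}v=\lambda_{1,0}(\gamma)(2|J_{\mathcal C}|)^{\frac{Q+2\gamma}{2Q}}(\rho\circ\mathcal C).
\]
Now take the candidate $\tilde w:=(2|J_{\mathcal C}|)^{\frac{Q-2\gamma}{2Q}}\bigl((\lambda_{1,0}(\gamma)\mathcal A_\gamma^{-1}\rho)\circ\mathcal C\bigr)$. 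Since every $\lambda_{j,k}(\gamma)>0$, the operator $\mathcal A_\gamma^{-1}$ is well defined and maps $H^\gamma(\mathbb S^{2n+1})$ into itself, so $\tilde w\in\mathcal D^\gamma(\mathbb H^n)$ by the isomorphism noted in Lemma \ref{lem:4-1}. Applying (\ref{eqn:relation-L-A}) with $U=\lambda_{1,0}(\gamma)\mathcal A_\gamma^{-1}\rho$ gives $\mathcal L_\gamma\tilde w=V_{\omega_{1,0}}v$. Injectivity of $\mathcal L_\gamma$ on $\mathcal D^\gamma(\mathbb H^n)$ then yields $w=\tilde w$, and the claimed formula for $(I-\mathcal K_{\omega_{1,0}})v$ follows by linearity.

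The main technical point here is justifying (\ref{eqn:relation-L-A}) in the weak sense for general $U\in H^\gamma$, rather than only for smooth $U$. I would handle it by density: pair both sides with test functions, use the identity (\ref{eqn:int-f-mathcalL-f}) polarized, and approximate $U$ by finite spherical harmonic sums.

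\textbf{Step 2: reading off the spectrum.} Write $\Phi:\rho\mapsto(2|J_{\mathcal C}|)^{\frac{Q-2\gamma}{2Q}}(\rho\circ\mathcal C)$. Step 1 says
\[
I-\mathcal K_{\omega_{1,0}}=\Phi\,(I-\lambda_{1,0}(\gamma)\mathcal A_\gamma^{-1})\,\Phi^{-1},
\]
with $\Phi$ a linear bijection $H^\gamma(\mathbb S^{2n+1})\to\mathcal D^\gamma(\mathbb H^n)$, so the two operators have the same point spectrum. The eigenvector correspondence is exact: $v=\Phi\rho\neq0$ is an eigenvector for $\mu$ if and only if $\rho\neq0$ satisfies $(I-\lambda_{1,0}\mathcal A_\gamma^{-1})\rho=\mu\rho$.

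On $\mathcal H_{j,k}$ the operator $I-\lambda_{1,0}\mathcal A_\gamma^{-1}$ acts as multiplication by $\mu_{j,k}(\gamma)$, by (\ref{eqn:A-gamma-action}). Each $\mathcal H_{j,k}\neq\{0\}$ consists of polynomials and so lies in $H^\gamma$; taking $\rho\in\mathcal H_{j,k}\setminus\{0\}$ shows $\mu_{j,k}\in\sigma_p$.

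Conversely, suppose $(I-\lambda_{1,0}\mathcal A_\gamma^{-1})\rho=\mu\rho$ with $\rho\neq0$. Expanding in the orthonormal basis $Y^l_{j,k}$ gives $(\mu_{j,k}-\mu)c^l_{j,k}(\rho)=0$ for every $j,k,l$. Some coefficient is nonzero, so $\mu=\mu_{j,k}$ for some $(j,k)$. This proves the equality of sets.
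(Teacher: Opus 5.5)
Your proposal is correct and follows essentially the same route as the paper: write $V_{\omega_{1,0}}=\lambda_{1,0}(\gamma)(2|J_{\mathcal C}|)^{\frac{2\gamma}{Q}}$, use the intertwining relation to identify the spherical counterpart of $\mathcal K_{\omega_{1,0}}v$ as $\lambda_{1,0}(\gamma)\mathcal A_\gamma^{-1}\rho$, then diagonalize on the spaces $\mathcal H_{j,k}$ and compare coefficients for the reverse inclusion; the only cosmetic difference is that you verify a candidate and invoke injectivity of $\mathcal L_\gamma$, whereas the paper reads $\mathcal A_\gamma\tilde\rho=\lambda_{1,0}(\gamma)\rho$ directly off the intertwining identity. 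One small point the paper adds and you should too: if $\mathcal D^\gamma(\mathbb H^n)$ is a real space, take the real or imaginary part of $\rho\in\mathcal H_{j,k}$, which lies in $\mathcal H_{j,k}\oplus\mathcal H_{k,j}$, where the operator still acts by $\mu_{j,k}(\gamma)=\mu_{k,j}(\gamma)$.
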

	
	\begin{proof}
		For any \(v\in \mathcal D^\gamma(\mathbb H^n)\), let \(\rho\in H^\gamma(\mathbb S^{2n+1})\) be defined by
		\[
		v(\eta)=(2|J_\mathcal C(\eta)|)^{\frac{Q-2\gamma}{2Q}}(\rho\circ \mathcal C)(\eta).
		\]
		By the definition of the \(\mathcal D^\gamma(\mathbb H^n)\)-norm, the Cayley intertwining relation (\ref{eqn:relation-L-A}) and the change of variables formula (\ref{eqn:int-F-FC}), we obtain that
		\begin{equation}\label{eqn:energy-identity}
			\begin{aligned}
				\|v\|_{\mathcal D^\gamma(\mathbb H^n)}^2
				&=\int_{\mathbb H^n}v\mathcal L_\gamma v\,d\eta
				=\int_{\mathbb H^n}(2|J_\mathcal C|)^{\frac{Q-2\gamma}{2Q}}(\rho\circ \mathcal C)
				\mathcal L_\gamma((2|J_\mathcal C|)^{\frac{Q-2\gamma}{2Q}}(\rho\circ \mathcal C))\\
				&= \int_{\mathbb H^n}(2|J_\mathcal C|)^{\frac{Q-2\gamma}{2Q}}(\rho\circ \mathcal C)
				(2|J_\mathcal C|)^{\frac{Q+2\gamma}{2Q}}((\mathcal A_\gamma\rho)\circ \mathcal C)\,d\eta\\
				&=2\int_{\mathbb H^n}|J_\mathcal C|(\rho \mathcal A_\gamma\rho)\circ \mathcal C\,d\eta
				=2\int_{\mathbb S^{2n+1}}\rho \mathcal A_\gamma\rho\,d\zeta.
			\end{aligned}
		\end{equation}
		The boundedness of \(\mathcal K_{\omega_{1,0}}\) on \(\mathcal D^\gamma(\mathbb H^n)\) ensures that \(\mathcal K_{\omega_{1,0}}v\in\mathcal D^\gamma(\mathbb H^n)\). Therefore, we define its spherical counterpart \(\widetilde{\rho}\in H^\gamma(\mathbb S^{2n+1})\) through the Cayley transform by
		\[
		\widetilde{\rho}(\zeta)
		=
		\left(2|J_{\mathcal C}(\mathcal C^{-1}(\zeta))|\right)^{-\frac{Q-2\gamma}{2Q}}
		(\mathcal K_{\omega_{1,0}}v)(\mathcal C^{-1}(\zeta)).
		\]
		Equivalently,
		\[
		\mathcal K_{\omega_{1,0}}v
		=
		(2|J_{\mathcal C}|)^{\frac{Q-2\gamma}{2Q}}
		(\widetilde{\rho}\circ\mathcal C).
		\]
		Recalling the definition of \(\mathcal K_{\omega_{1,0}}\), we further obtain that
		\[
		\mathcal L_\gamma(\mathcal K_{\omega_{1,0}}v)=V_{\omega_{1,0}}v.
		\]
		To express both sides of this identity in spherical coordinates, we first rewrite the potential \(V_{\omega_{1,0}}\) in terms of the Jacobian of the Cayley transform. By Lemma \(\ref{solution-sca-tran}\), the standard bubble satisfies
		\begin{equation}\label{eqn:w-10-lambda00-expre}
			\begin{aligned}
				\omega_{1,0}(\eta)&=C(n,\gamma)\omega_0(\eta)\\
				&=\kappa(n,\gamma)^{\frac{Q-2\gamma}{4\gamma}}\omega_0(\eta)=2^{\frac{Q-2\gamma}{2}}
				\lambda_{0,0}(\gamma)^{\frac{Q-2\gamma}{4\gamma}}\omega_0(\eta)\\
				&=\frac{2^{\frac{Q-2\gamma}{2}}\lambda_{0,0}(\gamma)^{\frac{Q-2\gamma}{4\gamma}}(2|J_\mathcal C(\eta)|)^\frac{Q-2\gamma}{2Q}}{2^\frac{Q-2\gamma}{2}}\\
				&=\lambda_{0,0}(\gamma)^{\frac{Q-2\gamma}{4\gamma}}(2|J_\mathcal C(\eta)|)^\frac{Q-2\gamma}{2Q}.
			\end{aligned}
		\end{equation}
		Together this with the definition of  \(V_{\omega_{1,0}}\), we find that
		\[
		\begin{aligned}
			V_{\omega_{1,0}}
			&=(Q^*-1)\omega_{1,0}^{\frac{4\gamma}{Q-2\gamma}}
			=(Q^*-1)
			\left[
			\lambda_{0,0}(\gamma)^{\frac{Q-2\gamma}{4\gamma}}
			(2|J_{\mathcal C}|)^{\frac{Q-2\gamma}{2Q}}
			\right]^{\frac{4\gamma}{Q-2\gamma}}\\
			&=(Q^*-1)\lambda_{0,0}(\gamma)
			(2|J_{\mathcal C}|)^{\frac{2\gamma}{Q}}.
		\end{aligned}
		\]
		Furthermore, the Gamma function recurrence \(\Gamma(z+1)=z\Gamma(z)\) and (\ref{eqn:lambda-j-define}) give
		\[
		\begin{aligned}
			\lambda_1(\gamma)
			=\frac{\Gamma\left(\frac{Q+2\gamma}{4}+1\right)}{\Gamma\left(\frac{Q-2\gamma}{4}+1\right)}
			&=\frac{\frac{Q+2\gamma}{4}\Gamma\left(\frac{Q+2\gamma}{4}\right)}{\frac{Q-2\gamma}{4}\Gamma\left(\frac{Q-2\gamma}{4}\right)}
			=\frac{Q+2\gamma}{Q-2\gamma}\cdot \frac{\Gamma\left(\frac{Q+2\gamma}{4}\right)}{\Gamma\left(\frac{Q-2\gamma}{4}\right)}=\frac{Q+2\gamma}{Q-2\gamma}\lambda_0(\gamma).
		\end{aligned}
		\]
		Combining this with the (\ref{eqn:lambda-i-j-expression}), we obtain that
		\[
		\begin{aligned}
			\lambda_{1,0}(\gamma)=\lambda_1(\gamma)\lambda_0(\gamma)=\frac{Q+2\gamma}{Q-2\gamma}\lambda_0(\gamma)^2=\frac{Q+2\gamma}{Q-2\gamma}\lambda_{0,0}(\gamma)=(Q^*-1)\lambda_{0,0}(\gamma).
		\end{aligned}
		\]
		Consequently,
		\[
		V_{\omega_{1,0}}=\lambda_{1,0}(\gamma )(2|J_{\mathcal C}|)^{\frac{2\gamma}{Q}}.
		\]
		With this expression for \(V_{\omega_{1,0}}\), we return to the identity
		\[
		\mathcal L_\gamma(\mathcal K_{\omega_{1,0}}v)
		=
		V_{\omega_{1,0}}v.
		\]
		Applying the Cayley intertwining relation (\ref{eqn:relation-L-A}) to the left-hand side and using the representations of \(v\) and \(\mathcal K_{\omega_{1,0}}v\) introduced above, we arrive at
		\[
		\begin{aligned}
			\mathcal L_\gamma(\mathcal K_{\omega_{1,0}}v)
			&=\mathcal L_\gamma\left((2|J_\mathcal C|)^{\frac{Q-2\gamma}{2Q}}(\widetilde{\rho}\circ \mathcal C)\right)
			=(2|J_\mathcal C|)^{\frac{Q+2\gamma}{2Q}}(\mathcal A_\gamma\widetilde{\rho})\circ \mathcal C
			=V_{\omega_{1,0}}v\\
			&=\lambda_{1,0}(\gamma)(2|J_\mathcal C|)^{\frac{2\gamma}{Q}}
			(2|J_\mathcal C|)^{\frac{Q-2\gamma}{2Q}}(\rho\circ \mathcal C)\\
			&=	\lambda_{1,0}(\gamma)(2|J_\mathcal C|)^{\frac{Q+2\gamma}{2Q}}
			(\rho\circ \mathcal C),
		\end{aligned}
		\]
		which means that
		\[
		\mathcal A_\gamma\widetilde{\rho}=\lambda_{1,0}(\gamma)\rho.
		\]
		Moreover, the eigenvalue formula (\ref{eqn:lambda-i-j-expression}) shows that
		\[
		\lambda_{j,k}(\gamma)\geq\lambda_{0,0}(\gamma)>0.
		\]
		Hence, \(\mathcal A_\gamma\) is invertible, and the preceding identity gives
		\[
		\widetilde{\rho}
		=
		\lambda_{1,0}(\gamma)\mathcal A_\gamma^{-1}\rho.
		\]
		Recalling the representations of \(v\) and \(\mathcal K_{\omega_{1,0}}v\), we find
		\begin{equation}\label{eqn:I-Kw10-I-lambda10}
			\begin{aligned}
				(I-\mathcal K_{\omega_{1,0}})v&=v-\mathcal K_{\omega_{1,0}}v\\
				&=(2|J_{\mathcal C}|)^{\frac{Q-2\gamma}{2Q}}(\rho\circ\mathcal C)
				-(2|J_{\mathcal C}|)^{\frac{Q-2\gamma}{2Q}}(\widetilde{\rho}\circ\mathcal C)\\
				&=(2|J_{\mathcal C}|)^{\frac{Q-2\gamma}{2Q}}\bigl((\rho-\widetilde{\rho})\circ\mathcal C\bigr)\\
				&=(2|J_{\mathcal C}|)^{\frac{Q-2\gamma}{2Q}}\bigl((\rho-\lambda_{1,0}(\gamma)\mathcal A_\gamma^{-1}\rho)\circ\mathcal C\bigr)\\
				&=(2|J_{\mathcal C}|)^{\frac{Q-2\gamma}{2Q}}\bigl(((I-\lambda_{1,0}(\gamma)\mathcal A_\gamma^{-1})\rho)\circ\mathcal C\bigr),
			\end{aligned}
		\end{equation}
		which shows that the action of \(I-\mathcal K_{\omega_{1,0}}\) on \(\mathcal D^\gamma(\mathbb H^n)\) corresponds to that of \(I-\lambda_{1,0}(\gamma)\mathcal A_\gamma^{-1}\) on \(H^\gamma(\mathbb S^{2n+1})\). The representation (\ref{eqn:I-Kw10-I-lambda10}) allows us to determine the eigenvalues of \(I-\mathcal K_{\omega_{1,0}}\) through the spherical eigenvalue equation. In particular, (\ref{eqn:lambda-j-define})-(\ref{eqn:lambda-i-j-expression}) give
		\begin{equation}\label{eqn:mathcal-A-lambda-Y}
			\mathcal A_\gamma Y_{j,k}=\lambda_{j,k}(\gamma)Y_{j,k}, \quad Y_{j,k}\in\mathcal H_{j,k},
		\end{equation}
		where
		\[
		\lambda_{j,k}(\gamma)=\lambda_j(\gamma)\lambda_k(\gamma)
		\]
		and
		\[
		\lambda_j(\gamma)=\frac{\Gamma\left(\frac{Q+2\gamma}{4}+j\right)}{\Gamma\left(\frac{Q-2\gamma}{4}+j\right)}.
		\]
		Applying \(\mathcal A_\gamma^{-1}\) to (\ref{eqn:mathcal-A-lambda-Y}), we get
		\[
		\begin{aligned}
			Y_{j,k}=\mathcal A_\gamma^{-1}\mathcal A_\gamma Y_{j,k}=\mathcal A_\gamma^{-1}
			\left(\lambda_{j,k}(\gamma)Y_{j,k}\right)=\lambda_{j,k}(\gamma)\mathcal A_\gamma^{-1}Y_{j,k}.
		\end{aligned}
		\]
		As \(\lambda_{j,k}(\gamma)>0\), it follows that
		\[
		\mathcal A_\gamma^{-1}Y_{j,k}
		=
		\frac{1}{\lambda_{j,k}(\gamma)}Y_{j,k}.
		\]
		Substituting this identity into the spherical operator, we have
		\begin{equation}\label{eqn:eigenvalue-equation-Ilambda10Ain}
			\begin{aligned}
				\left(I-\lambda_{1,0}(\gamma)\mathcal A_\gamma^{-1}\right)Y_{j,k}
				&=Y_{j,k}-\lambda_{1,0}(\gamma)\mathcal A_\gamma^{-1}Y_{j,k}
				=Y_{j,k}-\frac{\lambda_{1,0}(\gamma)}{\lambda_{j,k}(\gamma)}Y_{j,k}\\
				&=\left(1-\frac{\lambda_{1,0}(\gamma)}{\lambda_{j,k}(\gamma)}\right)Y_{j,k}.
			\end{aligned}
		\end{equation}
		To transfer this eigenvalue equation back to \(\mathbb H^n\), consider
		\[
		v_{j,k}:=(2|J_{\mathcal C}|)^{\frac{Q-2\gamma}{2Q}}(Y_{j,k}\circ\mathcal C).
		\]
		By construction, the spherical counterpart of \(v_{j,k}\) is \(Y_{j,k}\). Applying (\ref{eqn:I-Kw10-I-lambda10})  with \(\rho=Y_{j,k}\) and using (\ref{eqn:eigenvalue-equation-Ilambda10Ain}), we derive that
		\[
		\begin{aligned}
			(I-\mathcal K_{\omega_{1,0}})v_{j,k}
			&=(2|J_{\mathcal C}|)^{\frac{Q-2\gamma}{2Q}}\left(\left(I-\lambda_{1,0}(\gamma)\mathcal A_\gamma^{-1}\right)Y_{j,k}\right)\circ\mathcal C\\
			&=\left(1-\frac{\lambda_{1,0}(\gamma)}{\lambda_{j,k}(\gamma)}\right)(2|J_{\mathcal C}|)^{\frac{Q-2\gamma}{2Q}}(Y_{j,k}\circ\mathcal C)\\
			&=\left(1-\frac{\lambda_{1,0}(\gamma)}{\lambda_{j,k}(\gamma)}\right)v_{j,k}.
		\end{aligned}
		\]
		Hence, each nonzero \(v_{j,k}\) is an eigenfunction of \(I-\mathcal K_{\omega_{1,0}}\) with eigenvalue
		\begin{equation}\label{eqn:mujk-lambda10-lambdajk}
			\mu_{j,k}(\gamma)=1-\frac{\lambda_{1,0}(\gamma)}{\lambda_{j,k}(\gamma)}.
		\end{equation}
		Since the operator is real and the eigenvalues \(\mu_{j,k}(\gamma)\) are real, a nonzero real or imaginary part of \(v_{j,k}\) is also an eigenfunction with the same eigenvalue. Consequently,
		\begin{equation}\label{eqn:relation-1}
			\left\{\mu_{j,k}(\gamma):j,k\geq0\right\}\subseteq\sigma_p(I-\mathcal K_{\omega_{1,0}}),
		\end{equation}
		where \(\sigma_p\) denotes the point spectrum.
		
		To rule out any additional eigenvalues, suppose that
		\[
		(I-\mathcal K_{\omega_{1,0}})v=\mu v
		\]
		for some nonzero \(v\in\mathcal D^\gamma(\mathbb H^n)\) and let \(\rho\in H^\gamma(\mathbb S^{2n+1})\) be its spherical counterpart. By the spherical harmonic decomposition in \cite[Section 2]{Branson-Fontana-Morpurgo2013}, the spherical counterpart \(\rho\) admits the expansion
		\begin{equation}\label{eqn:rho-sum-defn}
			\rho=\sum_{j,k\geq0}\rho_{j,k},\qquad\rho_{j,k}\in\mathcal H_{j,k},
		\end{equation}
		with convergence in \(H^\gamma(\mathbb S^{2n+1})\). The convergence in \(H^\gamma(\mathbb S^{2n+1})\) allows the bounded operator \(\mathcal A_\gamma^{-1}\) to act term by term on the spherical harmonic expansion. Meanwhile, (\ref{eqn:I-Kw10-I-lambda10}) transforms the eigenvalue equation
		\[
		(I-\mathcal K_{\omega_{1,0}})v=\mu v
		\]
		into
		\begin{equation}\label{eqn:I-lambda-mu-rho}
			\left(I-\lambda_{1,0}(\gamma)\mathcal A_\gamma^{-1}\right)\rho=\mu\rho.
		\end{equation}
		Using (\ref{eqn:rho-sum-defn}), \(\mathcal A_\gamma^{-1}\rho_{j,k}=\frac{1}{\lambda_{j,k}(\gamma)}\rho_{j,k}\) and (\ref{eqn:mujk-lambda10-lambdajk}), (\ref{eqn:I-lambda-mu-rho}) becomes
		\[
		\begin{aligned}
			0&=\left(I-\lambda_{1,0}(\gamma)\mathcal A_\gamma^{-1}-\mu I\right)\rho\\
			&=\left(I-\lambda_{1,0}(\gamma)\mathcal A_\gamma^{-1}-\mu I\right)
			\sum_{j,k\geq0}\rho_{j,k}\\
			&=\sum_{j,k\geq0}\left[\rho_{j,k}-\lambda_{1,0}(\gamma)\mathcal A_\gamma^{-1}\rho_{j,k}-\mu\rho_{j,k}\right]\\
			&=\sum_{j,k\geq0}\left[\rho_{j,k}-\frac{\lambda_{1,0}(\gamma)}{\lambda_{j,k}(\gamma)}\rho_{j,k}-\mu\rho_{j,k}\right]\\
			&=\sum_{j,k\geq0}\left(1-\frac{\lambda_{1,0}(\gamma)}{\lambda_{j,k}(\gamma)}-\mu\right)\rho_{j,k}\\
			&=\sum_{j,k\geq0}\left(\mu_{j,k}(\gamma)-\mu\right)\rho_{j,k}.
		\end{aligned}
		\]
		Since \(v\neq0\), its spherical counterpart \(\rho\) is also nonzero, and (\ref{eqn:rho-sum-defn}) guarantees the existence of a pair \((p,q)\) such that \(\rho_{p,q}\neq0\). Taking the \(L^2(\mathbb S^{2n+1})\) inner product of the preceding identity with \(\rho_{p,q}\) and using the orthogonality of the spherical harmonic spaces gives
		\[
		\begin{aligned}
			0&=\Big(\sum_{j,k\geq0}\left(\mu_{j,k}(\gamma)-\mu\right)\rho_{j,k},\rho_{p,q}\Big)_{L^2(\mathbb S^{2n+1})}\\
			&=\sum_{j,k\geq0}\left(\mu_{j,k}(\gamma)-\mu\right)(\rho_{j,k},\rho_{p,q})_{L^2(\mathbb S^{2n+1})}\\
			&=\left(\mu_{p,q}(\gamma)-\mu\right)\|\rho_{p,q}\|_{L^2}^2.
		\end{aligned}
		\]
		The choice of \((p,q)\) ensures that \(\|\rho_{p,q}\|_{L^2}^2>0\). Hence,
		\[
		\mu=\mu_{p,q}(\gamma)=1-\frac{\lambda_{1,0}(\gamma)}{\lambda_{p,q}(\gamma)}.
		\]
		As \(\mu\) was an arbitrary eigenvalue of \(I-\mathcal K_{\omega_{1,0}}\), this proves 
		\[
		\sigma_p(I-\mathcal K_{\omega_{1,0}})\subseteq\left\{\mu_{j,k}(\gamma):j,k\geq0\right\}.
		\]
		Combining this with (\ref{eqn:relation-1}), we conclude that
		\[
		\sigma_p(I-\mathcal K_{\omega_{1,0}})	=\left\{\mu_{j,k}(\gamma):j,k\geq0\right\}.
		\] The proof is now complete.
	\end{proof}
	
	\textbf{Proof of Theorem \ref{main-result-6}}. We begin with the standard bubble \(\omega_{1,0}\). The spectral characterization established in Lemma \ref{lem:characterize-point-spectrum} provides the basis for deriving the sharp estimate. Once the optimal constant has been identified, the unitary equivalence established in Lemma \ref{lem:fredholm-operator} will extend the result to every bubble in \(\mathcal Z\).
	Recall that the linearized operator can be written as
	\begin{equation}\label{eqn:the-1.9-L10-I-K}
		\mathcal L_{\omega_{1,0}}=\mathcal L_\gamma(I-\mathcal K_{\omega_{1,0}}),
	\end{equation}
	where
	\[
	\mathcal K_{\omega_{1,0}}=\mathcal L_\gamma^{-1}\mathcal M_{V_{\omega_{1,0}}}
	\ \ 
	\mbox{and}\ \ 
	V_{\omega_{1,0}}=(Q^*-1)\omega_{1,0}^{\frac{4\gamma}{Q-2\gamma}}.
	\]
	By the definition of the inner product in Section \ref{sec:preliminary}, \(\mathcal L_\gamma\) is an isometric isomorphism from \(\mathcal D^\gamma(\mathbb H^n)\) onto \(\mathcal D^{-\gamma}(\mathbb H^n)\). Consequently, the identity \(\mathcal L_{\omega_{1,0}}=\mathcal L_\gamma(I-\mathcal K_{\omega_{1,0}})\) yields
	\begin{equation}\label{eqn:the-1.9-L10-I-K-norm}
		\begin{aligned}
			\|\mathcal L_{\omega_{1,0}}v\|_{\mathcal D^{-\gamma}(\mathbb H^n)}
			&=\|\mathcal L_\gamma(I-\mathcal K_{\omega_{1,0}})v\|_{\mathcal D^{-\gamma}(\mathbb H^n)}
			=\|(I-\mathcal K_{\omega_{1,0}})v\|_{\mathcal D^\gamma(\mathbb H^n)}.
		\end{aligned}
	\end{equation}
	By Lemma \ref{lem:characterize-point-spectrum}, the eigenvalues of \(I-\mathcal K_{\omega_{1,0}}\) are precisely
	\[
	\mu_{j,k}(\gamma)
	=
	1-\frac{\lambda_{1,0}(\gamma)}
	{\lambda_{j,k}(\gamma)},
	\qquad j,k\geq0.
	\]
	Thus, the desired estimate reduces to determining the sharp spectral lower bound for \(I-\mathcal K_{\omega_{1,0}}\) on the orthogonal complement of \(T_{\omega_{1,0}}\mathcal Z\). By Theorem \ref{main-result-1}, the kernel of \(\mathcal L_{\omega_{1,0}}\) is exactly \(T_{\omega_{1,0}}\mathcal Z\). Moreover, 
	it follows from Lemma \ref{lem:characterize-point-spectrum} and the strict monotonicity of \(\lambda_j(\gamma)\) that the equality \(\mu_{j,k}(\gamma)=0\) holds for \((j,k)=(1,0)\) or \((0,1)\). Consequently, the kernel of \(\mathcal L_{\omega_{1,0}}\) corresponds under the Cayley transform to the real-valued subspace of \(\mathcal H_{1,0}\oplus\mathcal H_{0,1}\). By using the self-adjointness of \(\mathcal L_\gamma\), (\ref{eqn:int-F-FC}), (\ref{eqn:relation-L-A}) and letting \(\phi:=(2|J_{\mathcal C}|)^{\frac{Q-2\gamma}{2Q}}
	(\psi\circ\mathcal C)\), we have
	\begin{equation}\label{eqn:v-phi-D-int-exp}
		\begin{aligned}
			(v,\phi)_{\mathcal D^\gamma(\mathbb H^n)}&=\int_{\mathbb H^n}
			\phi\mathcal L_\gamma v\,d\eta
			=\int_{\mathbb H^n}v\mathcal L_\gamma\phi\,d\eta\\
			&=\int_{\mathbb H^n}
			(2|J_{\mathcal C}|)^{\frac{Q-2\gamma}{2Q}}
			(\rho\circ\mathcal C)
			(2|J_{\mathcal C}|)^{\frac{Q+2\gamma}{2Q}}
			((\mathcal A_\gamma\psi)\circ\mathcal C)\,d\eta\\
			&=2\int_{\mathbb H^n}
			|J_{\mathcal C}|
			(\rho\mathcal A_\gamma\psi)\circ\mathcal C\,d\eta
			=2\int_{\mathbb S^{2n+1}}
			\rho\mathcal A_\gamma\psi\,d\zeta.
		\end{aligned}
	\end{equation}
	Since \(\mathcal A_\gamma\) acts on \(\mathcal H_{1,0}\oplus\mathcal H_{0,1}\) by multiplication by \(\lambda_{1,0}(\gamma)>0\), it follows that
	\begin{equation}\label{eqn:v-phi-D-int-2lambda10}
		(v,\phi)_{\mathcal D^\gamma(\mathbb H^n)}=2\lambda_{1,0}(\gamma)
		\int_{\mathbb S^{2n+1}}\rho\psi\,d\zeta,\qquad
		\phi\in T_{\omega_{1,0}}\mathcal Z.
	\end{equation}
	Moreover, if \(v\perp T_{\omega_{1,0}}\mathcal Z\), then for every \(\phi\in T_{\omega_{1,0}}\mathcal Z\), 
	\begin{equation}\label{eqn:v-phi-int-0}
		0=(v,\phi)_{\mathcal D^\gamma(\mathbb H^n)}
		=2\lambda_{1,0}(\gamma)
		\int_{\mathbb S^{2n+1}}\rho\psi\,d\zeta,
	\end{equation}
	where \(\psi\) is its spherical counterpart in the real-valued subspace of \(\mathcal H_{1,0}\oplus\mathcal H_{0,1}\). As \(\rho\) is real-valued, its spherical harmonic components satisfy \(\overline{\rho_{1,0}}=\rho_{0,1}\). Therefore, we choose \(\psi=\rho_{1,0}+\rho_{0,1}\), which is real-valued and belongs to \(\mathcal H_{1,0}\oplus\mathcal H_{0,1}\). Using (\ref{eqn:rho-sum-defn}) and (\ref{eqn:v-phi-int-0}), we obtain
	\[
	\begin{aligned}
		0&=\int_{\mathbb S^{2n+1}}\rho(\rho_{1,0}+\rho_{0,1})\,d\zeta\\
		&=\Big(\sum_{j,k\geq0}\rho_{j,k},\rho_{1,0}+\rho_{0,1}\Big)_{L^2(\mathbb S^{2n+1})}\\
		&=\sum_{j,k\geq0}\left((\rho_{j,k},\rho_{1,0})_{L^2(\mathbb S^{2n+1})}+(\rho_{j,k},\rho_{0,1})_{L^2(\mathbb S^{2n+1})}\right)\\
		&=(\rho_{1,0},\rho_{1,0})_{L^2(\mathbb S^{2n+1})}+(\rho_{0,1},\rho_{0,1})_{L^2(\mathbb S^{2n+1})}\\
		&=\|\rho_{1,0}\|_{L^2}^2+\|\rho_{0,1}\|_{L^2}^2.
	\end{aligned}
	\]
	Both terms on the right-hand side are nonnegative, and then
	\[
	\rho_{1,0}=\rho_{0,1}=0
	\]
	whenever
	\[
	v\in(T_{\omega_{1,0}}\mathcal Z)^\perp.
	\]
	
	We next determine the smallest absolute value of the remaining eigenvalues. For convenience, set
	\[
	a:=\frac{Q+2\gamma}{4}
	\ \ 
	\mbox{and}
	\ \
	b:=\frac{Q-2\gamma}{4}.
	\]
	The assumption \(0<\gamma<\frac{Q}{2}\) ensures that \(a>b>0\). First, the constant eigenspace \(\mathcal H_{0,0}\) corresponds to the eigenvalue
	\[
	\begin{aligned}
		\mu_{0,0}(\gamma)=1-\frac{\lambda_{1,0}(\gamma)}{\lambda_{0,0}(\gamma)}
		=1-\frac{\lambda_1(\gamma)}{\lambda_0(\gamma)}
		=1-\frac{a}{b}
		=-\frac{4\gamma}{Q-2\gamma}.
	\end{aligned}
	\]
	The kernel modes corresponding to \((j,k)=(1,0)\) and \((0,1)\) have already been excluded by the orthogonality condition \(v\in(T_{\omega_{1,0}}\mathcal Z)^\perp\). It remains to examine the eigenvalues associated with \(j+k\geq2\) and compare them with \(|\mu_{0,0}(\gamma)|\). If \(j\geq2\), the monotonicity of \(\lambda_j(\gamma)\) in Lemma \ref{lem:4-2} gives
	\[
	\lambda_{j,k}(\gamma)\geq\lambda_2(\gamma)\lambda_0(\gamma)=\lambda_{2,0}(\gamma).
	\]
	The same bound holds when \(k\geq2\). Therefore, it remains to compare \(\lambda_{2,0}(\gamma)\) with \(\lambda_{1,1}(\gamma)\), since \(\lambda_{2,0}(\gamma)=\lambda_{0,2}(\gamma)\).
	By (\ref{eqn:lambda-i-j-expression}), one has
	\[
	\begin{aligned}
		\frac{\lambda_{1,1}(\gamma)}{\lambda_{2,0}(\gamma)}
		=\frac{\lambda_1(\gamma)^2}{\lambda_0(\gamma)\lambda_2(\gamma)}
		=\frac{(Q+2\gamma)(Q-2\gamma+4)}{(Q-2\gamma)(Q+2\gamma+4)}>1.
	\end{aligned}
	\]
	Hence,
	\[
	\lambda_{j,k}(\gamma)\geq\lambda_{2,0}(\gamma)=\lambda_{0,2}(\gamma)
	\]
	for \((j,k)\notin\{(0,0),(1,0),(0,1)\}\). Equality holds only for \((j,k)=(2,0)\) or \((j,k)=(0,2)\).  Consequently, for \(j+k\geq2\), there holds
	\[
	\begin{aligned}
		\mu_{j,k}(\gamma)=1-\frac{\lambda_{1,0}(\gamma)}{\lambda_{j,k}(\gamma)}
		&\geq 1-\frac{\lambda_{1,0}(\gamma)}{\lambda_{2,0}(\gamma)}=\mu_{2,0}(\gamma)=\mu_{0,2}(\gamma)\\
		&=1-\frac{\lambda_1(\gamma)}{\lambda_2(\gamma)}=\frac{4\gamma}{Q+2\gamma+4}.
	\end{aligned}
	\]
	Since \(\mu_{0,0}(\gamma)<0\) and \(\mu_{1,0}(\gamma)=\mu_{0,1}(\gamma)=0\), the smallest positive eigenvalue of \(I-\mathcal K_{\omega_{1,0}}\) is
	\[
	\mu_{2,0}(\gamma)=\mu_{0,2}(\gamma)=\frac{4\gamma}{Q+2\gamma+4}.
	\]
	Combining this with the fact that 
	\[
	|\mu_{0,0}(\gamma)|=\frac{4\gamma}{Q-2\gamma}>\frac{4\gamma}{Q+2\gamma+4},
	\]
	we have
	\[
	\min_{\substack{j,k\geq0\\(j,k)\neq(1,0),(0,1)}}|\mu_{j,k}(\gamma)|=\frac{4\gamma}{Q+2\gamma+4}.
	\]
	This minimum is attained for \((j,k)=(2,0)\) and \((0,2)\). Therefore, every nonzero eigenvalue of \(I-\mathcal K_{\omega_{1,0}}\) satisfies
	\begin{equation}\label{eqn:sharp-eigenvalue-bound}
		|\mu_{j,k}(\gamma)|\geq c_{n,\gamma}:=\frac{4\gamma}{Q+2\gamma+4},
	\end{equation}
	
	The sharp eigenvalue bound established above now allows us to derive the desired estimate for \(\mathcal L_{\omega_{1,0}}\). Combining (\ref{eqn:energy-identity}) and (\ref{eqn:rho-sum-defn}) with the orthogonality relation \(\int_{\mathbb S^{2n+1}}\rho_{j,k}\rho_{p,q}\,d\zeta=0\) for \((p,q)\neq(k,j)\) and the eigenvalue equation \(\mathcal A_\gamma\rho_{j,k}=\lambda_{j,k}(\gamma)\rho_{j,k}\), we deduce that
	\begin{equation}\label{eqn:v-L2-D-2-sum-expre}
		\begin{aligned}
			\|v\|_{\mathcal D^\gamma(\mathbb H^n)}^2
			&=2\int_{\mathbb S^{2n+1}}\rho\mathcal A_\gamma\rho\,d\zeta
			=2\int_{\mathbb S^{2n+1}}\Big(\sum_{j,k\geq0}\rho_{j,k}\Big)\mathcal A_\gamma\Big(\sum_{j,k\geq0}\rho_{j,k}\Big)d\zeta\\
			&=2\int_{\mathbb S^{2n+1}}\Big(\sum_{j,k\geq0}\rho_{j,k}\Big)\Big(\sum_{j,k\geq0}\lambda_{j,k}(\gamma)\rho_{j,k}
			\Big)d\zeta\\
			&=2\sum_{j,k\geq0}\sum_{p,q\geq0}
			\lambda_{p,q}(\gamma)
			\int_{\mathbb S^{2n+1}}
			\rho_{j,k}\rho_{p,q}\,d\zeta\\
			&=2\sum_{j,k\geq0}
			\lambda_{j,k}(\gamma)
			\int_{\mathbb S^{2n+1}}
			\rho_{j,k}\rho_{k,j}\,d\zeta
			=2\sum_{j,k\geq0}
			\lambda_{j,k}(\gamma)
			\|\rho_{j,k}\|_{L^2(\mathbb S^{2n+1})}^2.
		\end{aligned}
	\end{equation}
	Furthermore, it follows from (\ref{eqn:eigenvalue-equation-Ilambda10Ain}) and (\ref{eqn:rho-sum-defn}) that
	\[
	\begin{aligned}
		(I-\lambda_{1,0}(\gamma)\mathcal A_\gamma^{-1})\rho
		=(I-\lambda_{1,0}(\gamma)\mathcal A_\gamma^{-1})\Big(\sum_{j,k\geq0}\rho_{j,k}\Big)
		=\sum_{j,k\geq0}\Big(1-\frac{\lambda_{1,0}(\gamma)}{\lambda_{j,k}(\gamma)}\Big)\rho_{j,k}
		=\sum_{j,k\geq0}\mu_{j,k}(\gamma)\rho_{j,k}.
	\end{aligned}
	\]
	Applying (\ref{eqn:I-Kw10-I-lambda10}) to (\ref{eqn:energy-identity}) and using (\ref{eqn:eigenvalue-equation-Ilambda10Ain}), we obtain
	\[
	\begin{aligned}
		\|(I-\mathcal K_{\omega_{1,0}})v\|_{\mathcal D^\gamma(\mathbb H^n)}^2
		&=2\int_{\mathbb S^{2n+1}}\left((I-\lambda_{1,0}(\gamma)\mathcal A_\gamma^{-1})\rho\right)\mathcal A_\gamma\left((I-\lambda_{1,0}(\gamma)\mathcal A_\gamma^{-1})\rho\right)d\zeta\\
		&=2\int_{\mathbb S^{2n+1}}\Big(\sum_{j,k\geq0}\mu_{j,k}(\gamma)\rho_{j,k}\Big)
		\mathcal A_\gamma\Big(\sum_{p,q\geq0}\mu_{p,q}(\gamma)\rho_{p,q}\Big)d\zeta\\
		&=2\int_{\mathbb S^{2n+1}}\Big(\sum_{j,k\geq0}\mu_{j,k}(\gamma)\rho_{j,k}\Big)
		\Big(\sum_{p,q\geq0}\lambda_{p,q}(\gamma)\mu_{p,q}(\gamma)\rho_{p,q}\Big)d\zeta\\
		&=2\sum_{j,k\geq0}\sum_{p,q\geq0}\mu_{j,k}(\gamma)\lambda_{p,q}(\gamma)\mu_{p,q}(\gamma)\int_{\mathbb S^{2n+1}}\rho_{j,k}\rho_{p,q}\,d\zeta\\
		&=2\sum_{j,k\geq0}\lambda_{j,k}(\gamma)\mu_{j,k}(\gamma)\mu_{k,j}(\gamma)
		\int_{\mathbb S^{2n+1}}\rho_{j,k}\rho_{k,j}\,d\zeta\\
		&=2\sum_{j,k\geq0}\lambda_{j,k}(\gamma)|\mu_{j,k}(\gamma)|^2\|\rho_{j,k}\|_{L^2(\mathbb S^{2n+1})}^2.
	\end{aligned}
	\]
	For \(v\in(T_{\omega_{1,0}}\mathcal Z)^\perp\), the previously established identities \(\rho_{1,0}=\rho_{0,1}=0\) eliminate the two modes corresponding to the zero eigenvalue. Consequently, the sharp eigenvalue bound (\ref{eqn:sharp-eigenvalue-bound}) holds for all remaining spherical harmonic components. Together with (\ref{eqn:v-L2-D-2-sum-expre}), this yields
	\[
	\begin{aligned}
		\|(I-\mathcal K_{\omega_{1,0}})v\|_{\mathcal D^\gamma(\mathbb H^n)}^2
		&=2\sum_{\substack{j,k\geq0\\(j,k)\neq(1,0),(0,1)}}\lambda_{j,k}(\gamma)	|\mu_{j,k}(\gamma)|^2\|\rho_{j,k}\|_{L^2(\mathbb S^{2n+1})}^2\\
		&\geq 2c_{n,\gamma}^2\sum_{\substack{j,k\geq0\\(j,k)\neq(1,0),(0,1)}}
		\lambda_{j,k}(\gamma)\|\rho_{j,k}\|_{L^2(\mathbb S^{2n+1})}^2\\
		&=c_{n,\gamma}^2\Big(2\sum_{j,k\geq0}\lambda_{j,k}(\gamma)\|\rho_{j,k}\|_{L^2(\mathbb S^{2n+1})}^2\Big)\\
		&=c_{n,\gamma}^2\|v\|_{\mathcal D^\gamma(\mathbb H^n)}^2.
	\end{aligned}
	\]
	Returning to the linearized operator \(\mathcal L_{\omega_{1,0}}\), (\ref{eqn:the-1.9-L10-I-K}) and (\ref{eqn:the-1.9-L10-I-K-norm}) give
	\[
	\begin{aligned}
		\|\mathcal L_{\omega_{1,0}}v\|_{\mathcal D^{-\gamma}(\mathbb H^n)}=\|(I-\mathcal K_{\omega_{1,0}})v\|_{\mathcal D^\gamma(\mathbb H^n)}\geq c_{n,\gamma}\|v\|_{\mathcal D^\gamma(\mathbb H^n)}.
	\end{aligned}
	\]
	
	It remains to show that the constant \(c_{n,\gamma}\) is optimal. To this end, choose
	\[
	\rho(\zeta)=\operatorname{Re}(\zeta_1^2)=\frac12\left(\zeta_1^2+\bar{\zeta}_1^2\right),
	\]
	which is a nonzero real-valued element of \(\mathcal H_{2,0}\oplus\mathcal H_{0,2}\). Let \(v=(2|J_{\mathcal C}|)^{\frac{Q-2\gamma}{2Q}}(\rho\circ\mathcal C)\) be its pullback under the Cayley transform. To verify the required orthogonality, let \(\phi\in T_{\omega_{1,0}}\mathcal Z\) be arbitrary and denote its spherical counterpart by \(\psi\). By the characterization of \(T_{\omega_{1,0}}\mathcal Z\) established above, we have \(\psi\in\mathcal H_{1,0}\oplus\mathcal H_{0,1}\) and \(\mathcal A_\gamma\psi=\lambda_{1,0}(\gamma)\psi\). Moreover, the orthogonality of the spherical harmonic spaces implies that
	\[
	\int_{\mathbb S^{2n+1}}\rho\psi\,d\zeta=0,
	\]
	because \(\rho\in\mathcal H_{2,0}\oplus\mathcal H_{0,2}\) and \(\psi\in\mathcal H_{1,0}\oplus\mathcal H_{0,1}\). Hence, 
	\[
	\begin{aligned}
		(v,\phi)_{\mathcal D^\gamma(\mathbb H^n)}=2\int_{\mathbb S^{2n+1}}
		\rho\mathcal A_\gamma\psi\,d\zeta=2\lambda_{1,0}(\gamma)\int_{\mathbb S^{2n+1}}\rho\psi\,d\zeta=0.
	\end{aligned}
	\]
	As \(\phi\) was arbitrary, this proves that \(v\in(T_{\omega_{1,0}}\mathcal Z)^\perp\).
	Furthermore, Lemma \(\ref{lem:characterize-point-spectrum}\) and the identities \(\mu_{2,0}(\gamma)=\mu_{0,2}(\gamma)=c_{n,\gamma}\) imply that
	\[
	(I-\mathcal K_{\omega_{1,0}})v=c_{n,\gamma}v.
	\]
	Combining this with (\ref{eqn:the-1.9-L10-I-K-norm}), we arrive at
	\[
	\begin{aligned}
		\|\mathcal L_{\omega_{1,0}}v\|_{\mathcal D^{-\gamma}(\mathbb H^n)}
		=\|(I-\mathcal K_{\omega_{1,0}})v\|_{\mathcal D^\gamma(\mathbb H^n)}
		=c_{n,\gamma}\|v\|_{\mathcal D^\gamma(\mathbb H^n)}.
	\end{aligned}
	\]
	Since \(v\neq0\), equality is attained in the preceding estimate. This proves that \(c_{n,\gamma}\) is the optimal constant.

	We next establish the second result of Theorem \ref{main-result-6}. Suppose that
	\[
	v\in(T_{\omega_{1,0}}\mathcal Z)^\perp
	\]
	and
	\[
	(v,\omega_{1,0})_{\mathcal D^\gamma(\mathbb H^n)}=0.
	\]
	By (\ref{eqn:w-10-lambda00-expre}), the standard bubble \(\omega_{1,0}\) corresponds under the Cayley transform to the positive constant \(\rho_{\omega_{1,0}}=\lambda_{0,0}(\gamma)^{\frac{Q-2\gamma}{4\gamma}}\) on \(\mathbb S^{2n+1}\). Proceeding as in the derivation of (\ref{eqn:v-phi-D-int-2lambda10}) and using \(\mathcal A_\gamma\rho_{\omega_{1,0}}=\lambda_{0,0}(\gamma)\rho_{\omega_{1,0}}\), we obtain
	\[
	\begin{aligned}
		(v,\omega_{1,0})_{\mathcal D^\gamma(\mathbb H^n)}=2\int_{\mathbb S^{2n+1}}\rho\mathcal A_\gamma\rho_{\omega_{1,0}}\,d\zeta=2\lambda_{0,0}(\gamma)\rho_{\omega_{1,0}}
		\int_{\mathbb S^{2n+1}}\rho\,d\zeta.
	\end{aligned}
	\]
	The additional orthogonality condition \((v,\omega_{1,0})_{\mathcal D^\gamma(\mathbb H^n)}=0\), together with \(\lambda_{0,0}(\gamma)>0\) and \(\rho_{\omega_{1,0}}>0\), implies that
	$
	\int_{\mathbb S^{2n+1}}\rho\,d\zeta=0.
	$
	As \(\mathcal H_{0,0}\) consists of constant functions and is orthogonal to all other spherical harmonic spaces, the expansion (\ref{eqn:rho-sum-defn}) yields 
	\[
	\begin{aligned}
		0=\int_{\mathbb S^{2n+1}}\rho\,d\zeta
		&=\int_{\mathbb S^{2n+1}}\Big(\sum_{j,k\geq0}\rho_{j,k}\Big)d\zeta
		=\sum_{j,k\geq0}\int_{\mathbb S^{2n+1}}\rho_{j,k}\,d\zeta
		=\sum_{j,k\geq0}\int_{\mathbb S^{2n+1}}\rho_{j,k}\cdot 1\,d\zeta\\
		&=\int_{\mathbb S^{2n+1}}\rho_{0,0}\,d\zeta
		=\rho_{0,0}\int_{\mathbb S^{2n+1}}1\,d\zeta
		=\rho_{0,0}|\mathbb S^{2n+1}|,
	\end{aligned}
	\]
	and hence \(\rho_{0,0}=0\). Combining this with \(v\in(T_{\omega_{1,0}}\mathcal Z)^\perp\), we find that \( \rho_{0,0}=\rho_{1,0}=\rho_{0,1}=0\) and the spherical harmonic expansion reduces to
	\begin{equation}\label{eqn:rho-j+k-geq2}
		\rho=\sum_{\substack{j,k\geq0\\j+k\geq2}}\rho_{j,k}.
	\end{equation}
	For all the remaining spherical harmonic components, the eigenvalue bound established above gives
	\begin{equation}\label{eqn:mujk-j+k-bounded}
		\mu_{j,k}(\gamma)\geq c_{n,\gamma}>0,
		\qquad j+k\geq2.
	\end{equation}
	Using (\ref{eqn:the-1.9-L10-I-K}), (\ref{eqn:sec2-D-int-defn}), (\ref{eqn:v-phi-D-int-exp}), (\ref{eqn:rho-j+k-geq2}), (\ref{eqn:mu-jk-defn}), (\ref{eqn:mujk-j+k-bounded}) and (\ref{eqn:v-L2-D-2-sum-expre}), we obtain that
	\[
	\begin{aligned}
		\langle\mathcal L_{\omega_{1,0}}v,v\rangle
		&=\langle\mathcal L_\gamma(I-\mathcal K_{\omega_{1,0}})v,v\rangle
		=\int_{\mathbb H^n}v\mathcal L_\gamma(I-\mathcal K_{\omega_{1,0}})v\,d\eta
		=\big((I-\mathcal K_{\omega_{1,0}})v,v\big)_{\mathcal D^\gamma(\mathbb H^n)}\\
		&=2\int_{\mathbb S^{2n+1}}\rho\mathcal A_\gamma
		\left((I-\lambda_{1,0}(\gamma)\mathcal A_\gamma^{-1})\rho\right)d\zeta\\
		&=2\int_{\mathbb S^{2n+1}}\rho(\mathcal A_\gamma-\lambda_{1,0}(\gamma)I)\rho\,d\zeta
		=2\int_{\mathbb S^{2n+1}}\sum_{\substack{j,k\geq0\\j+k\geq2}}\rho_{j,k}\cdot
		\sum_{\substack{p,q\geq0\\p+q\geq2}}(\lambda_{p,q}(\gamma)-\lambda_{1,0}(\gamma))\rho_{p,q}d\zeta\\
		&=2\sum_{\substack{j,k\geq0\\j+k\geq2}}
		\sum_{\substack{p,q\geq0\\p+q\geq2}}
		(\lambda_{p,q}(\gamma)-\lambda_{1,0}(\gamma))
		\int_{\mathbb S^{2n+1}}
		\rho_{j,k}\rho_{p,q}\,d\zeta\\
		&=2\sum_{\substack{j,k\geq0\\j+k\geq2}}
		(\lambda_{j,k}(\gamma)-\lambda_{1,0}(\gamma))
		\int_{\mathbb S^{2n+1}}
		\rho_{j,k}\rho_{k,j}\,d\zeta\\
		&=2\sum_{\substack{j,k\geq0\\j+k\geq2}}
		(\lambda_{j,k}(\gamma)-\lambda_{1,0}(\gamma))
		\|\rho_{j,k}\|_{L^2(\mathbb S^{2n+1})}^2
		=2\sum_{\substack{j,k\geq0\\j+k\geq2}}
		\lambda_{j,k}(\gamma)\mu_{j,k}(\gamma)
		\|\rho_{j,k}\|_{L^2(\mathbb S^{2n+1})}^2\\
		&\geq
		2c_{n,\gamma}
		\sum_{\substack{j,k\geq0\\j+k\geq2}}
		\lambda_{j,k}(\gamma)
		\|\rho_{j,k}\|_{L^2(\mathbb S^{2n+1})}^2
		=c_{n,\gamma}\|v\|_{\mathcal D^\gamma(\mathbb H^n)}^2.
	\end{aligned}
	\]
	The constant is again optimal because equality holds for functions corresponding to nonzero real-valued elements of \(\mathcal H_{2,0}\oplus\mathcal H_{0,2}\).
	
	Finally, let \(\omega_{\lambda,\xi}\in\mathcal Z\) be arbitrary. The unitary transformation \(\mathcal U_{\lambda,\xi}\) defined in (\ref{eqn:f-lambda-xi-define}) maps \(\omega_{1,0}\) to \(\omega_{\lambda,\xi}\) and carries \(T_{\omega_{1,0}}\mathcal Z\) onto \(T_{\omega_{\lambda,\xi}}\mathcal Z\). Moreover, (\ref{eqn:Kgeneral-K10-mathcalU-rela}) gives
	\[
	I-\mathcal K_{\omega_{\lambda,\xi}}=\mathcal U_{\lambda,\xi}(I-\mathcal K_{\omega_{1,0}})\mathcal U_{\lambda,\xi}^{-1}.
	\]
	This unitary equivalence preserves the spectrum and the corresponding orthogonality conditions. Hence, both estimates remain valid for \(\omega_{\lambda,\xi}\) with the same constant \(c_{n,\gamma}\). The equality cases are also preserved under \(\mathcal U_{\lambda,\xi}\), ensuring that the constant remains optimal.
	\qed
	
\section{Open problems and perspectives}
This paper has developed a unified spectral and variational framework for bubble
solutions to the critical CR fractional Yamabe equation on the Heisenberg group,
valid for the full range \(0<\gamma<\frac{Q}{2}\), including the admissible integer-order
cases. The Cayley intertwining relation and the complex spherical-harmonic
decomposition have allowed us to characterize the kernel of the linearized
operator without reducing the equation to an integral equation or requiring a
separate decay analysis, to obtain the exact normalization constant of the
standard bubble, and to prove the non-degeneracy, local rigidity, and unit Morse
index of every positive bubble. Several directions remain open, and three of them
stand out as the most natural and the most consequential.

The first open problem is the extension of the non-degeneracy and local rigidity results to
general strictly pseudoconvex CR manifolds, where the bubble solutions
are not explicit and the Cayley transform is unavailable. This would require a
geometric spectral theory for the fractional CR Yamabe operator on curved CR
manifolds, and would connect the present results to the broader CR geometry
literature.
The second open problem is the computation of the Morse index of higher-order bubbles
	and bubble clusters, which would extend the unit-index result of the present
paper to the next natural class of solutions and would link the spectral analysis
to the theory of bubbling and concentration compactness.
The last open problem proposed in this section is the search for a scalar-defect analogue in the CR
fractional setting, in the spirit of the scalar compactness defect recently
introduced for critical fractional Kirchhoff problems. Such a mechanism would
provide a compactness criterion that does not rely on the explicit symmetry of
the Heisenberg group, and would therefore be applicable to CR fractional
problems on curved manifolds, where the Cayley transform is not available.

These three directions are, in our view, the most promising paths toward a
complete spectral and variational theory of critical CR fractional problems, and
we hope that the framework developed here will serve as a useful starting point
for future work in this direction.
	
	\begin{openproblem}[Non-degeneracy on general strictly pseudoconvex CR manifolds]
	The paper works on the Heisenberg group (the flat model) and the CR sphere.
	What happens on a general strictly pseudoconvex CR manifold? The bubble
	solutions are not explicit, and the Cayley transform is not available.
	\end{openproblem}
	
	This is the natural geometric extension. It would require developing a spectral
	theory for the fractional CR Yamabe operator on curved CR manifolds, possibly
	via pseudohermitian geometry and a CR analogue of the conformal Laplacian. The
	paper's Cayley-transform approach is intrinsically flat and would need to be
	replaced by a geometric argument.
	
	\begin{openproblem}[Morse index of higher-order bubbles and bubble clusters]
	The paper shows that every positive bubble has Morse index exactly one. What is
	the Morse index of higher-order bubbles (solutions with multiple concentration
	centers) or sign-changing solutions? Is there a Morse index formula in terms of
	the number of bubbles and their interaction?\end{openproblem}
	
	The paper's spectral analysis computes the spectrum of the shifted spherical
	operator. For clusters, the linearized operator acquires additional zero modes
	from the interaction between centers, and the spectrum would need to be computed
	via a gluing or Lyapunov--Schmidt argument.
	
	\begin{openproblem}[Scalar-defect analogue for CR fractional problems]
	The paper develops a direct spectral approach based on the Cayley transform. Is
	there a scalar-defect analogue in the CR fractional setting, in the spirit of the
	scalar compactness defect recently introduced for critical fractional
	\(p\)-Kirchhoff problems? Would it provide a unified compactness mechanism for
	the CR fractional Yamabe problem?\end{openproblem}
	
	The paper's approach avoids profile decomposition by using the explicit spectral
	decomposition. A scalar-defect approach would provide an alternative, possibly
	more general, compactness mechanism that does not rely on the explicit symmetry
	of the Heisenberg group.

	\subsection*{Disclosure of computational assistance} OpenAI's ChatGPT-6 was used as an auxiliary tool for language editing, algebraic and proof cross-checks, manuscript revision, and bibliographic checks. All mathematical statements, arguments, and references remain the responsibility of the authors.

	\subsection*{Acknowledgments} The authors wish to thank J\'er\^ome V\'etois for his helpful question concerning the removal of the boundedness assumption, which led us to improve the first two results of the present work.
	
	\subsection*{Funding} V.D. R\u adulescu is supported  by
	the AGH University of Krak\'ow under grant No. 16.16.420.054 funded by the Polish Ministry of Science
	and Higher Education.
	H.R. Sun was partially supported by the NSFC (Grant No. 11671181) and NSF of Gansu Province of China (Grant No. 21JR7RA535, 24JRRA414). Z. Zhang was partially supported by the NSFC (Grant No. 12371402).

\subsection*{Authors' contributions} All authors have contributed equally.

\subsection*{Data availability}
	The authors declare that data sharing is not applicable to this article as no data sets were generated or analyzed during the current study.

\subsection*{Conflict of interest}
	On behalf of all authors, the corresponding author states that there is no conflict of interest.

	\bibliographystyle{elsarticle-num}

\begin{thebibliography}{99}
		
		
		
		\bibitem{Abramowitz-Stegun-1948}M. Abramowitz and I.A. Stegun, Handbook of Mathematical Functions with Formulas, Graphs, and Mathematical Tables, Vol. 55. US Government Printing Office, 1964.
		
		
		\bibitem{Branson-Fontana-Morpurgo2013}T. P. Branson, L. Fontana and C. Morpurgo,
		\textit{Moser-Trudinger and Beckner-Onofri's inequalities on the CR sphere},
		Ann. of Math., {\bf177}(2013), 1-52.
		
		
		\bibitem{Chen-Wang2017}Y.H. Chen and Y. Wang,
		\textit{Perturbation of the CR fractional Yamabe problem},
		Math. Nachr., {\bf290}(2017), 534-545.
		
		
		\bibitem{Chtioui-Hajaiej-Yacoub2021}H. Chtioui, H. Hajaiej and R. Yacoub,
		\textit{Concentration phenomena for fractional CR Yamabe type flows},
		Differential Geom. Appl., \textbf{79}(2021), 101803.
		
		
		
		
		\bibitem{Davila-DelPino-Sire2013}J. D\'avila, M. del Pino and Y. Sire,
		\textit{Nondegeneracy of the bubble in the critical case for nonlocal equations},
		Proc. Amer. Math. Soc., \textbf{141}(2013), 3865-3870.
		
		
		
		
		
		
		
		\bibitem{Frank-Gonzalez-Monticelli-Tan2015}R.L. Frank, M. del Mar Gonz\'alez, D. D. Monticelli and J. Tan,
		\textit{An extension problem for the CR fractional Laplacian},
		Adv. Math., {\bf270}(2015), 97–137.
		
		
		\bibitem{Frank-Lenzmann2013}R. L. Frank and E. Lenzmann,
		\textit{Uniqueness of non-linear ground states for fractional Laplacians in $\mathbb R$},
		Acta Math., {\bf210}(2013), 261-318.
		
		
		
		\bibitem{Frank-Lenzmann-Silvestre2016}R. L. Frank, E. Lenzmann and L. Silvestre,
		\textit{Uniqueness of radial solutions for the fractional Laplacian},
		Comm. Pure Appl. Math., {\bf69}(2016), 1671-1726.
		
		
		
		\bibitem{Frank-Lieb2012}R. L. Frank and E. H. Lieb,
		\textit{Sharp constants in several inequalities on the Heisenberg group},
		Ann. of Math., {\bf176}(2012), 349-381.
		
		
		\bibitem{Guidi-Maalaoui-Martino2018}C. Guidi, A. Maalaoui and V. Martino,
		\textit{Palais-Smale sequences for the fractional CR Yamabe functional and multiplicity results},
		Calc. Var. Partial Differential Equations, \textbf{57}(2018), 152.
		
		
		\bibitem{Jerison-Lee1987}D. Jerison and J. M. Lee,
		\textit{The Yamabe problem on CR manifolds},
		J. Differential Geom., \textbf{25}(1987), 167-197.
		
		
		\bibitem{Jerison-Lee1988}D. Jerison and J. M. Lee,
		\textit{Extremals for the Sobolev inequality on the Heisenberg group and the CR Yamabe problem},
		J. Amer. Math. Soc., \textbf{1}(1988), 1-13.
		
		
		\bibitem{Kim-Merle2025}K. Kim and F. Merle,
		\textit{On classification of global dynamics for energy-critical equivariant harmonic map heat
			flows and radial nonlinear heat equation},
		Comm. Pure Appl. Math., {\bf78}(2025), 1783-1842.
		
		
		\bibitem{Kristaly2020}A. Krist\'aly,
		\textit{Nodal solutions for the fractional Yamabe problem on Heisenberg groups},
		Proc. R. Soc. Edinb. Sect. A-Math., {\bf150}(2020), 771-788.
		
		
		\bibitem{Lieb-Loss2001}E. Lieb and M. Loss,
		\textit{Analysis},
		2$^{nd}$ edition, Grad. Stud. Math., Vol. 14, American Mathematical Soc., Providence, RI, 2001.
		
		
		
		\bibitem{Liu-Zhang2015}H. Liu and A. Zhang,
		\textit{Remainder terms for several inequalities on some groups of Heisenberg-type},
		Sci. China, Math., \textbf{58}(2015), 2565-2580.
		
		
		
		\bibitem{Martensen1987}E. Martensen,
		\textit{Riesz theory without axiom of choice},
		Proc. Amer. Math. Soc.,  \textbf{99}(1987), 496-500.
		
		
		
		
		\bibitem{Prajapat-Varghese2025}J. V. Prajapat and A. S. Varghese,
		\textit{Symmetry and classification of solutions to an integral equation in the Heisenberg group $\mathbb H^n$},
		Math. Ann., \textbf{392}(2025), 659-700.
		
		
		
		\bibitem{Qiang-Tang-Zhang2026}J. Qiang, Z. Tang and Y. Zhang,
		\textit{On the non-degeneracy and existence of sign-changing solutions to elliptic problem on the Heisenberg group},
		Nonlinear Anal. TMA, {\bf264}(2026), 113999.
		
		
		\bibitem{Roncal-Thangavelu2016}L. Roncal and S. Thangavelu,
		\textit{Hardy's inequality for fractional powers
			of the sublaplacian on the Heisenberg group},
		Adv. Math., {\bf302}(2016), 106-158.
		
		
		
		\bibitem{Vilenkin-Klimyk1993}N. J. Vilenkin and A. U. Klimyk,
		\textit{Representation of Lie groups and Special Functions},
		Vol. 2. Class I Representations, Special Functions, and Integral Transforms, Math. Appl. (Soviet Series) 74, Kluwer Academic Publishers Group, Dordrecht, 1993.
		
		
		
	\end{thebibliography}

	\vspace{0.5cm}
	
	\noindent
	{Jianlun Liu}\\
	School of Mathematics and Statistics, Lanzhou University,
	Lanzhou 730000, China\\
	\textit{E-mail address:} \texttt{jianlunliumath@163.com}
	
	\vspace{0.4cm}
	
	\noindent
	{Vicen\c tiu D. R\u adulescu}\\
	AGH University of Krak\'ow, Faculty of Applied Mathematics,
	30-059 Krak\'ow, Poland\\
	Brno University of Technology, Faculty of Electrical Engineering
	and Communication, Technicka 3058/10, Brno 61600,
	Czech Republic\\
	\textit{E-mail address:} \texttt{radulescu@agh.edu.pl}
	
	\vspace{0.4cm}
	
	\noindent
	{Hong-Rui Sun}\\
	School of Mathematics and Statistics, Lanzhou University,
	Lanzhou 730000, China\\
	\textit{E-mail address:} \texttt{hrsun@lzu.edu.cn}
	
	\vspace{0.4cm}
	
	\noindent
	{Ziheng Zhang}\\
	School of Mathematical Sciences, Tiangong University,
	Tianjin 300387, China\\
	\textit{E-mail address:} \texttt{zhzh@mail.bnu.edu.cn}

\end{document}